\newtheorem{theorem}{Theorem}[section]
\newtheorem{lemma}[theorem]{Lemma}
\newtheorem{corollary}[theorem]{Corollary}
\theoremstyle{definition}
\theoremstyle{remark}
\numberwithin{equation}{section}
\newcommand*{\house}[1]{%
	\mathord{%
		\mathpalette\@house{#1}%
	}%
}
\newcommand*{\@house}[2]{%
	\dimen@=\fontdimen8 %
	\ifx#1\scriptscriptstyle\scriptscriptfont
	\else\ifx#1\scriptstyle\scriptfont
	\else\textfont\fi\fi
	3 %
	\sbox0{%
		$#1%
		\vrule width\dimen@\relax
		\overline{%
			\kern2\dimen@
			\begingroup 
			#2%
			\endgroup
			\kern2\dimen@
		}%
		\vrule width\dimen@\relax
		\mathsurround=1.5\dimen@ 
		$%
	}%
	\ht0=\dimexpr\ht0-\dimen@\relax
	\dp0=\dimexpr\dp0+2\dimen@\relax
	\vbox{%
		\kern\dimen@ 
		\copy0 %
	}%
}
\theoremstyle{plain}
\newtheorem{prop}[theorem]{Proposition}
\newtheorem*{claim}{Claim}
\newtheorem*{lemma*}{Lemma}
\theoremstyle{definition}
\newcommand{\ve}{\varepsilon}
\newcommand{\R}{\mathbb{R}}
\newcommand{\N}{\mathbb{N}}
\newcommand{\Z}{\mathbb{Z}}
\newcommand{\Q}{\mathbb{Q}}
\newcommand\Nr{\mathrm{N}}
\newcommand{\co}{\mathcal{O}}
\DeclareMathOperator\Tr{Tr}
\title{Even better sums of squares over quintic and cyclotomic fields}
\author{V\' \i t\v ezslav Kala\thanks{vitezslav.kala@matfyz.cuni.cz, ORCID: 0000-0001-5515-6801} }
\author{Pavlo Yatsyna\thanks{{p.yatsyna@matfyz.cuni.cz, ORCID: 0000-0003-2298-8446}}}
\affil{Charles University, Faculty of Mathematics and Physics, Department of Algebra, Sokolov\-sk\' a 83, 18600 Praha~8, Czech Republic
}
\date{}
\begin{document}

\maketitle

\begin{abstract}
We classify all totally real number fields of degree at most 5 that admit a universal quadratic form with rational integer coefficients; in fact, there are none over the previously unsolved cases of quartic and quintic fields. This fully settles the lifting problem for universal forms in degrees at most 5. The main tool behind the proof is a computationally intensive classification of fields in which every multiple of 2 is the sum of squares. We further extend these results to some real cyclotomic fields of large degrees and prove Kitaoka's conjecture for them.

\medskip 

\textbf{Keywords.} universal quadratic form, sum of squares, totally real number field, cyclotomic number field, indecomposable algebraic integer

\medskip 

\textbf{Funding.}
V.K. and P.Y. were supported by Czech Science Foundation, grant number 26-20514S. P.Y. was supported by Charles University, projects PRIMUS/24/SCI/010 and UNCE/24/SCI/022, Research Council of Finland (grant 351271, PI C. Hollanti).

\medskip 

\textbf{Acknowledgments.}
 	We are extremely grateful to James McKee for computing all the $9\,834\,226$ monic irreducible degree 5 polynomials with roots bounded in absolute value by $2+\sqrt{6}$. We thank Rudolf Scharlau for drawing our attention to Ralf Gerkmann’s Master’s thesis \cite{Ge} and for sharing it with us, and the anonymous referees for several helpful comments.

\medskip 

 	\textbf{Data availability.}
 All data is available upon request. Magma code available at \url{https://github.com/pav10/2O}.

\medskip 

\textbf{Statements and Declarations.}
 The authors have no competing interests to declare that are relevant to
the content of this article.
\end{abstract}

\section{Introduction}

Siegel's 1934 book \cite{Si book} on quadratic forms starts by noting that: \textit{``It is somewhat surprising that in a branch of
mathematics as old as the theory of quadratic
forms which originated among the ancient
Babylonians and has been intensively cultivated
during the last three
centuries by a succession
of mathematicians of the highest rank, including
Fermat, Gauss, Jacobi and Minkowski, any
fundamentally new ideas have been left to be
discovered.''}
Over the last 90 years, this surprise has not diminished, at the very least thanks to the celebrated 15- and 290-theorems of Conway--Schneeberger and Bhargava--Hanke \cite{Bh, BH}.

We will focus on the representability of algebraic integers as sums of squares. 
The sum of squares in a number field is always totally positive, and correspondingly, it is often the most interesting to restrict to the case when the field is totally real, as we do in this paper.
Although every totally positive algebraic number is the sum of four squares in a number field \cite[Satz~1]{Si1}, the corresponding question for rings of integers is much more difficult.

Maa{\ss} proved that the sum of three squares is \textit{universal over $\Q(\sqrt{5})$}, meaning that it represents all totally positive algebraic integers in $\Q(\sqrt{5})$ \cite{Ma}. Siegel later demonstrated 
that the sum of any number of squares in any totally real number field other than $\Q$ and $\Q(\sqrt{5})$ always misses some totally positive integers \cite[Theorem 1]{Si3}, and so is not universal. The study of other universal quadratic forms and their ranks has received considerable attention in recent years \cite{BK, CO, HHX, KT, KTZ, KYZ, Man, Ya}.

On the other hand, for any totally real number field $K$, all totally positive integers divisible by some fixed positive integer $m$ (that depends only on $K$) can be represented as the sum of integral squares \cite[Satz 9]{Si1}. However, for given positive integers $m$ and $d$, there are only finitely many totally real number fields of degree $d$ for which all totally positive integers divisible by $m$ can be represented as the sum of squares \cite[Theorem 2]{KY3}. In this article, we elaborate on the case $m = 2$ and classify all number fields of degree $d$ up to five for which all even (i.e., those divisible by $2$) totally positive integers can be represented as the sum of squares.

\begin{theorem}\label{Thm:o2}
Let $K$ be a totally real number field of degree $d\le 5$. 
Every element of $2\co_K^+$ is the sum of squares of integers if and only if $K$ is isomorphic to \begin{equation*}\label{2fields}
\Q,\ \Q(\sqrt{2}),\ \Q(\sqrt{3}),\ \Q(\sqrt{5}),\ \Q(\zeta_7+\zeta_7^{-1}),\  \Q(\rho),\ \Q(\sqrt{2},\sqrt{5}),\text{ or }\ \Q\left(\sqrt{(5+\sqrt{5})/2}\right),
\end{equation*}
where $\rho$ is a root of $x^3-4x-2.$
\end{theorem}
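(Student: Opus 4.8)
The plan is to split the proof into the easy direction (verifying that each of the eight listed fields does have the property that every element of $2\co_K^+$ is a sum of integral squares) and the hard direction (showing that no other totally real field of degree $\le 5$ works).

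For the easy direction, I would handle each field individually. For $\Q$ this is classical (Lagrange). For the real quadratic fields $\Q(\sqrt 2),\Q(\sqrt 3),\Q(\sqrt 5)$ and for the cubic $\Q(\rho)$, the quartic $\Q(\sqrt 2,\sqrt 5)$, and the quintic-relevant $\Q(\zeta_7+\zeta_7^{-1})$ and $\Q(\sqrt{(5+\sqrt5)/2})$, I would invoke known results on sums of squares in these specific fields (several of these appear in the literature, e.g.\ sums of three or four squares being nearly universal in small-discriminant fields); where a reference is not available I would exhibit an explicit representation of a basis of $2\co_K^+$ as a cone, i.e.\ represent $2\omega_i$ for generators $\omega_i$ of the monoid $\co_K^+$ (or rather of $2\co_K^+$) and also the relevant products, reducing to a finite check via the theory of the Pythagoras number and additively indecomposable elements. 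Concretely, it suffices to represent each indecomposable totally positive integer $\alpha$ multiplied by $2$, together with enough cross terms, since sums of squares form an additively closed set; this is a finite computation in each field.

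For the hard direction, the strategy is a finiteness-plus-enumeration argument exactly in the spirit of \cite[Theorem 2]{KY3}, specialized to $m=2$. First I would recall or re-derive the effective bound: if $K$ has degree $d\le 5$ and every element of $2\co_K^+$ is a sum of squares, then the discriminant (or root discriminant) of $K$ is bounded by an explicit constant depending only on $d$. The mechanism is that if some totally positive integer is ``too close to the boundary'' of the cone of totally positive elements — e.g.\ an indecomposable element of small norm but with one very small conjugate — then its double cannot be a sum of squares, because sums of squares satisfy strong simultaneous inequalities on all the real embeddings (each conjugate of a sum of $k$ squares of integers is at least the square of the corresponding conjugate of a nonzero integer, forcing a trace lower bound incompatible with small-conjugate elements). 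Pinning down this bound for $m=2$ is the crux.

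The main obstacle, and the bulk of the work, is then the \emph{computationally intense classification} alluded to in the abstract: enumerate all totally real fields of each degree $d\le 5$ with (root) discriminant below the derived bound — using the LMFDB or Hunter-type searches — and for each candidate field decide whether $2\co_K^+$ really is contained in the set of sums of integral squares. For the negative cases one needs to produce, for each field, an explicit totally positive integer $\alpha$ with $2\alpha$ provably not a sum of squares; the natural candidates are doubles of additively indecomposable integers of small norm, and non-representability is certified by a local obstruction or by the trace/conjugate inequality above (if $2\alpha=\sum \gamma_i^2$ then $\Tr(2\alpha)\ge \Tr(\sum \gamma_i^2)$ with each $\gamma_i\in\co_K$, and minimality forces a contradiction with the size of the conjugates of $\alpha$). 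I expect the delicate points to be: (i) making the discriminant bound small enough that the field enumeration is actually finite and tractable, and (ii) for the handful of genuinely borderline fields near the bound, carrying out a careful case analysis of indecomposables and their representations rather than a brute-force search. The quartic and quintic cases are where this is hardest, and resolving them is precisely the new content of the theorem.
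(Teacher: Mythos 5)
Your ``easy'' direction is essentially the paper's: reduce to twice the additively indecomposable elements (finitely many up to unit squares, by the norm bound $\Nr_{K/\Q}(\alpha)\le\Delta_K$), cite Scharlau and earlier work where available, and finish the remaining field $\Q(\rho)$ by a finite computation. That part is fine.

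The hard direction, however, has a genuine gap at exactly the point you flag as ``the crux.'' You propose to bound the (root) discriminant of any admissible field and then enumerate fields below that bound, but you never derive such a bound; the mechanism you sketch (an indecomposable with one very small conjugate forces a large conjugate elsewhere if its double is a sum of squares) is correct in spirit but does not by itself produce an explicit, degree-uniform discriminant bound --- one would still need a geometry-of-numbers construction guaranteeing that every field of large discriminant \emph{contains} such a lopsided totally positive integer, and the paper only manages this for quartic fields containing a quadratic subfield (Proposition \ref{pr:quartic traces}, via Minkowski-reduced bases), not for general quartic or quintic fields. The paper's actual finiteness device is different and is the key new idea: Theorem \ref{thm:hat} shows that if every element of $2\co_K^+$ is a sum of squares, then $K$ is generated by integers of house $<2+\sqrt 6$. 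The proof is short and self-contained: if the subfield $E$ generated by all integers of house $<X$ is proper, take $\beta_0\in\co_K\setminus E$ of minimal house $Y$, shift it to a totally positive $\beta$ of house $<2Y+1$, and observe that any representation $2\beta=\sum x_i^2$ must use some $x_i\notin E$, whence $4Y+2>\house{x_i^2}\ge Y^2$ and $Y\le 2+\sqrt 6$. This converts the problem into enumerating monic irreducible totally real polynomials with all roots in $[-(2+\sqrt 6),2+\sqrt 6]$ (Robinson's algorithm), which is what makes degrees $4$ and $5$ computationally tractable; a discriminant-based enumeration at the scale your argument would plausibly yield is not obviously feasible. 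You also miss a subtlety your own approach would sidestep but the paper must handle: the house bound only controls \emph{generators}, so in degree $4$ one must separately treat biquadratic fields generated by two quadratic integers of small house, neither of which generates $K$ alone.
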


We first prove an explicit version of \cite[Theorem 7]{KY3} by establishing (in Theorem \ref{thm:hat}) 
that the house of the generator of fields with $m=2$ is bounded. 
This is then followed by a significant computational part of the proof that involves dealing with tens of millions of number fields.

Theorem \ref{Thm:o2} has striking implications for the \textit{lifting problem for universal forms}, introduced 
by Kala--Yatsyna
\cite{KY1} (see also \cite{XZ}):
While we have already seen Siegel's result that the sum of squares is very rarely universal, the lifting problem asks the analogous question about the universality of other \textit{$\Z$-forms}, i.e., positive definite quadratic forms whose coefficients are rational integers. Some examples of universal $\Z$-forms over $\Q(\sqrt 5)$ were found by Chan--Kim--Raghavan \cite{CKR} and Deutsch \cite{De}, before Kala--Yatsyna proved \cite[Theorem 1.1]{KY1} that $\Q(\sqrt 5)$ is the only real quadratic field that admits a universal $\Z$-form, and obtained some partial results in other small degrees  \cite[Theorem 1.2]{KY1}. Gil Mu\~ noz--Tinkov\' a covered certain cubic fields \cite{GMT}, until all these results were significantly expanded by Kim--Lee \cite{KL}, who completely characterised cubic and biquadratic fields with a universal $\Z$-form.

In the present paper, we completely solve the lifting problem in degrees $d\leq 5$, thus fundamentally strengthening all the previous theorems.

\begin{theorem}\label{cor:z-forms}
	Let $K$ be a totally real number field of degree $d\le 5$. 
	There is a $\Z$-form that is universal over $K$ if and only if $K$ is isomorphic to $\Q,\ \Q(\sqrt{5})$, or $\ \Q(\zeta_7+\zeta_7^{-1}).$
\end{theorem}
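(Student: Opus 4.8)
The plan is to deduce the classification of $\Z$-forms from the classification of fields where $2\co_K^+$ is a sum of squares (Theorem \ref{Thm:o2}), using the standard ``lifting'' mechanism. First I would recall the key reduction from \cite{KY1}: if $Q$ is a $\Z$-form in $n$ variables that is universal over $K$, then for every totally positive $\alpha\in\co_K$ there is a representation $Q(x_1,\dots,x_n)=\alpha$ with $x_i\in\co_K$; diagonalising the quadratic form $Q$ over $\Q$ and clearing denominators, one shows that a suitable scalar multiple $cQ$ (with $c\in\Z_{>0}$ depending only on $Q$) represents, in particular, all elements of $c\cdot\co_K^+$ as values of a sum of squares — more precisely, one uses that $Q$ being positive definite over $\Q$ forces, after an integral change of variables, that some fixed multiple of every totally positive integer lies in the image of the sum of squares form. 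The cleanest route is the one already used in \cite[Theorem 1.1]{KY1}: a universal $\Z$-form over $K$ forces $2\co_K^+$ (or more generally $m\co_K^+$ for a controlled $m$, but $m=2$ suffices after a short argument handling the prime $2$ and the Hasse--Minkowski local conditions) to consist of sums of squares of integers, so $K$ must be one of the eight fields listed in Theorem \ref{Thm:o2}.

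Next I would eliminate the five fields from that list that should \emph{not} carry a universal $\Z$-form, namely $\Q(\sqrt2)$, $\Q(\sqrt3)$, $\Q(\rho)$ with $\rho^3-4\rho-2=0$, $\Q(\sqrt2,\sqrt5)$, and $\Q(\sqrt{(5+\sqrt5)/2})$. For each of these, the strategy is to exhibit an obstruction: produce an element (or a congruence class of elements) that no $\Z$-form can represent. The standard tool is to find, in each such field, a totally positive integer $\beta$ of small norm together with a local obstruction — e.g.\ work at a dyadic or triadic place, or use the indecomposable integers of the field, whose existence and shape are well understood for these low-degree fields — so that any $\Z$-form universal over $K$ would have to represent $\beta$ integrally, and a $p$-adic computation shows this is impossible for the relevant prime $p$. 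In practice this is a finite check: the real quadratic cases $\Q(\sqrt2),\Q(\sqrt3)$ and the biquadratic case are already done in \cite{KY1, KL}, and for $\Q(\rho)$ and $\Q(\sqrt{(5+\sqrt5)/2})$ one either cites the partial results of \cite{KY1, GMT} or runs the same local obstruction argument.

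Finally, for the three surviving fields $\Q$, $\Q(\sqrt5)$, $\Q(\zeta_7+\zeta_7^{-1})$ I would exhibit an explicit universal $\Z$-form in each. For $\Q$ this is Lagrange's four-square form $x^2+y^2+z^2+w^2$; for $\Q(\sqrt5)$ one cites the forms of Chan--Kim--Raghavan \cite{CKR} or Deutsch \cite{De}; for the real cubic cyclotomic field $\Q(\zeta_7+\zeta_7^{-1})$ one cites (or constructs) a known universal $\Z$-form — such a form is guaranteed to exist precisely because $2\co_K^+$ is a sum of squares there and one can then lift a suitable sum-of-squares form to a genuinely universal $\Z$-form by the techniques of \cite{KY1}. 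I expect the main obstacle to be the two ``new'' exclusions $\Q(\rho)$ and $\Q(\sqrt{(5+\sqrt5)/2})$: one must be careful that the local obstruction really rules out \emph{every} $\Z$-form, not just the sum of squares, which typically requires understanding the genus of $\Z$-forms over $\co_K$ at the bad prime and possibly invoking that the relevant indecomposable integers cannot be represented with bounded norm — a computation that, while finite, needs to be carried out cleanly.
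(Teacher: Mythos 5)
Your overall architecture matches the paper's: use the Mordell--Ko theorem (every positive definite integral matrix of rank $\le 5$ is $U^tU$ for an integer matrix $U$) to deduce from a universal $\Z$-form that every element of $2\co_K^+$ is a sum of squares, reduce to the eight fields of Theorem \ref{Thm:o2}, and then eliminate five of them. But the elimination step --- which is where the real content lies --- is not correctly identified. You propose $p$-adic/local obstructions to representing some $\beta\in\co_K^+$. This cannot work at the level you describe: in all five fields to be excluded, $2\alpha$ \emph{is} a sum of squares for every $\alpha\in\co_K^+$ (that is exactly why these fields survive to the list of Theorem \ref{Thm:o2}), so there is no local obstruction at the level of sums of squares, and a local obstruction to representation by an arbitrary unknown $\Z$-form $Q$ is not available since neither $Q$ nor its rank is fixed. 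The paper's actual obstruction is finer: if $Q$ is a $\Z$-form, its doubled Gram matrix $2M_Q$ has \emph{even diagonal entries}, and this parity survives restriction to the rank-$\le d$ sublattice carrying a given representation of $\alpha$ and then the Mordell--Ko factorization $2M_{Q'}=U^tU$. Hence any representation $Q(v)=\alpha$ yields a decomposition $2\alpha=x_1^2+\dots+x_s^2$ with all $x_i\ne 0$ and with $(U^tU)_{ii}\in 2\Z$ for all $i$, i.e.\ each column of $U$ has even sum of squares. For a single well-chosen $\alpha$ in each of the five fields (e.g.\ $2-\sqrt2$ in $\Q(\sqrt2)$) one enumerates the finitely many sum-of-squares decompositions of $2\alpha$ and checks that none satisfies this parity condition. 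Your plan contains no substitute for this idea.

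Two further slips: the quartic field $\Q\bigl(\sqrt{(5+\sqrt5)/2}\bigr)=\Q(\zeta_{20}+\zeta_{20}^{-1})$ is cyclic, not biquadratic, so it is not covered by \cite{KL} and genuinely requires the new argument. And your justification for the existence of a universal $\Z$-form over $\Q(\zeta_7+\zeta_7^{-1})$ --- that it is ``guaranteed to exist precisely because $2\co_K^+$ is a sum of squares there'' --- contradicts the very theorem you are proving, since five of the eight fields with that property admit no universal $\Z$-form; the correct reference is the explicit form $x^2+y^2+z^2+w^2+xy+xz+xw$ of \cite[Theorem 1.2]{KY1}.
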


The implication from right to left is known, as some examples of universal $\Z$-forms over the three fields listed above are $x^2+y^2+z^2+w^2$ (Lagrange), $x^2+y^2+z^2$ \cite{Ma}, and $x^2+y^2+z^2+w^2+xy+xz+xw$ \cite[Theorem~1.2]{KY1}, respectively.
The other implication will be proved in Section~\ref{se:4}.

Theorem~\ref{Thm:o2} also implies a new result on number fields satisfying the local-global principle for representations of integers as sums of integral squares. Scharlau \cite{Sc1, Sc2} classified all such totally real number fields in degrees up to 4, and our Corollary~\ref{th:exc} shows that there are no further such fields in degree 5.

By studying the basis of quartic number fields, we also prove a quantitative result (Corollary~\ref{cor:quart}) on the finiteness of nonprimitive quartic number fields such that all elements of $m\co_K^+$ are sums of squares.
This is a much more explicit version of \cite[Theorem~2]{KY3} (that concerned a finiteness result for all fields of a given degree). 
Our proof (in Corollary~\ref{cor:quart}) is different and utilizes a unique structure of quartic number fields that contain real quadratic subfields. 

The proof of Theorem~\ref{Thm:o2} is based on a fairly large number of computations concerning elements with a small house (see the beginning of the next section for the definition). While it may be possible to similarly deal with fields of slightly larger degrees, it is unlikely that this method will lead to a result valid for all degrees $d$. 
This prompted us to investigate totally real cyclotomic number fields, as they contain a proper integer with a house bounded by 2, and their degree can be arbitrarily large. We demonstrate the following.

\begin{theorem}\label{thm:C2} Let $q\geq 5$ be an odd prime or a power of two.
	Let $K_q=\Q(\zeta_q+\zeta_q^{-1})$ be the maximal totally real subfield of the $q$-th cyclotomic field. 
	
	If every element of $2\co_K^+$ is the sum of squares of integers in $K_q,$ then $q=5,\ 7,$ or $8$.
\end{theorem}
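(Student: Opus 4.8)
The plan is to split the argument according to the degree $d=[K_q:\Q]$. For $d\le 5$ the fields $K_q$ with $q\ge5$ are $K_5=\Q(\sqrt5)$, $K_7=\Q(\zeta_7+\zeta_7^{-1})$ and $K_8=\Q(\sqrt2)$ — which are the three asserted exceptions and lie on the list of Theorem~\ref{thm:o2} (for $K_5$ this is already Maass' theorem that $x^2+y^2+z^2$ is universal) — together with $K_9=\Q(2\cos\tfrac{2\pi}{9})$, $K_{11}$ and $K_{16}=\Q(\sqrt{2+\sqrt2})$; since none of these last three occurs on the list of Theorem~\ref{thm:o2}, as one checks by comparing minimal polynomials (or discriminants) with $\Q(\rho)$ and with the two quartic fields there, that theorem shows $2\co_{K_q}^+$ is not contained in the sums of squares. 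Hence from now on $d\ge6$. When $(q-1)/2$ (for $q$ prime), or $q$ itself (for $q$ a power of two, $q\ge32$), has a divisor in $\{2,3,4,5\}$, the field $K_q$ has a subfield $L$ of that degree; $L$ is again absent from the list of Theorem~\ref{thm:o2} (the only fields of prime conductor there are $K_5$ and $K_7$, the two quartic ones have conductor $20$ and $40$, while $L$ has conductor $q$, resp.\ a power of two $\ge16$), so one could hope to conclude by descending the sum-of-squares property from $K_q$ to $L$. That descent is delicate, though, and it is in any case useless for the ``safe primes'' $q=2p+1$ with $p$ prime, for which $K_q$ has no proper subfield other than $\Q$; so the crux is a direct argument valid uniformly for all $q$ with $d\ge6$.

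For the direct argument I would use the element $\gamma_q:=2-\omega_q=(1-\zeta_q)(1-\zeta_q^{-1})=4\sin^2(\pi/q)$: it is totally positive, it generates $K_q$, its norm equals $q$ for $q$ prime and $2$ for $q$ a power of two, and its trace is small — namely $q=2d+1$, resp.\ $2d$. Writing $\mathfrak p$ for the prime above the totally ramified rational prime $p_0\in\{q,2\}$, one checks that $v_{\mathfrak p}(2\gamma_q)$ is odd. Since $\Tr_{K_q/\Q}(\beta^2)=\sum_\sigma\sigma(\beta)^2\ge d$ for every nonzero $\beta\in\co_{K_q}$ — with equality only at $\beta=\pm1$, and with $\Tr(\beta^2)\ge d+2$ whenever $\beta$ is not a unit — the trace identity $\sum_i\Tr(\beta_i^2)=\Tr(2\gamma_q)$ forces any representation $2\gamma_q=\sum_{i=1}^n\beta_i^2$ to have $n\le4$. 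One then prunes the possibilities: no $\beta_i$ can be $\pm1$, for otherwise $2\gamma_q-j$ with $j=\#\{i:\beta_i=\pm1\}\ge1$ would be totally nonnegative, whereas its smallest conjugate $8\sin^2(\pi/q)-j$ is negative for $q$ large (and $2\gamma_q\notin\Z$); the same use of positivity at the embedding where $\gamma_q$ is smallest, together with $|\mathrm{N}(\beta_i)|\ge1$, also excludes any $\beta_i$ that is a unit of trace very close to $d$, so every $\beta_i$ is a non-unit of fairly large trace and hence $n\le3$, leaving a short list of admissible multisets $\{\Tr(\beta_i^2)\}$ with sum $\Tr(2\gamma_q)$. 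The odd valuation $v_{\mathfrak p}(2\gamma_q)$ rules out the one-square case and forces every surviving representation to contain at least two $\mathfrak p$-unit squares whose residues cancel in the residue field $\mathbb{F}_q$, resp.\ $\mathbb{F}_2$; this already kills the two-square case when $q\equiv3\pmod4$, and in the power-of-two case the tiny residue field allows a congruence analysis modulo powers of $\mathfrak p$ that prunes the list further.

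What remains is a bounded collection of two- or three-square representations of $2\gamma_q$, and ruling these out is the genuine obstacle: the trace and local conditions are individually satisfiable, so one must exclude \emph{global} integers $\beta_i\in\co_{K_q}$ realising them, uniformly in $q$ — and because the smallest conjugate of $2\gamma_q$ is only polynomially small in $q$, the crude ``tiny conjugate $\Rightarrow$ large norm $\Rightarrow$ large trace'' estimate does not contradict the trace budget $\Tr(2\gamma_q)$, which grows only linearly in $q$. I expect this is exactly where the hypothesis ``$q$ prime or a power of two'' is used: it provides a single totally ramified prime of residue degree $1$ to carry the valuation argument, and it makes $\mathrm{Gal}(K_q/\Q)$ cyclic, so that all conjugates of $\gamma_q$ and of any candidate $\beta_i$ are Chebyshev-type polynomials in $2\cos(2\pi/q)$; the surviving candidates can then be eliminated, for instance by bounding their norms through the product identity $\prod_\sigma\bigl(\sum_i\sigma(\beta_i)^2\bigr)=\mathrm{N}(2\gamma_q)$ and colliding with the trace constraint, or through a genus analysis of the forms $x^2+y^2$ and $x^2+y^2+z^2$ over $\co_{K_q}$ — with $q\in\{5,7,8\}$ the precise values at which no such contradiction arises. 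It may additionally be necessary to let the auxiliary element vary with $q\bmod4$, which governs whether $-1$ is a square modulo $q$ and hence whether two squares can absorb the odd valuation at $\mathfrak p$.
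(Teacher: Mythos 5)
Your overall strategy is the paper's: handle the low-degree fields via Theorem \ref{thm:o2}, and for the rest exhibit one explicit element of $2\co_{K_q}^+$ that is not a sum of squares, namely $2(2\mp\omega)$, whose total positivity, trace $\approx 4d$, and norm you compute correctly (for $q=2^n$ the paper uses $\alpha_n=2+\omega$, a conjugate of your $2-\omega$). The trace-budget bound $n\le 4$ on the number of squares is also the paper's first step. But your proof stops exactly where you say it does: ``ruling these out is the genuine obstacle.'' The surviving two- and three-square representations are never excluded; you only list possible mechanisms (norm product identities, genus theory, varying the element with $q\bmod 4$) without carrying any of them out, and the $\mathfrak p$-adic valuation argument you do complete only disposes of the one-square case and of two squares when $q\equiv 3\pmod 4$. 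So the proposal is not a proof.

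The missing idea is an explicit classification of \emph{all} $\beta\in\co_q$ with $\Tr_{K_q/\Q}(\beta^2)$ below the budget, which is what makes the remaining case analysis genuinely finite. The paper gets this from the explicit Gram matrix of the trace form in the basis $\omega_0,\dots,\omega_{d-1}$: for $q=2^n$ one has $\Tr(\beta^2)=d\bigl(a_0^2+2\sum_{i\ge1}a_i^2\bigr)$ (Lemma \ref{lemma:sqr}), so the only $\beta$ with $\Tr(\beta^2)\le 4d$ are $\pm1,\pm2,\pm\omega_j,\pm1\pm\omega_j,\pm\omega_j\pm\omega_{j'}$; for $q=p$ prime one has $\Tr(\beta^2)=\sum a_j^2+2\sum_{i>j}(a_i-a_j)^2$, and Lemma \ref{lemma:ptrace} (after Scharlau) shows the only $\beta$ with $\Tr(\beta^2)<4d+2$ are $0,\pm1,\pm\omega_i,\pm(\omega_i\pm1),\pm(\omega_i+\omega_h),\pm2$. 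Comparing the handful of admissible sums of squares of these against $4\mp 2\omega$ (using, in the $2^n$ case, the parity observation of Lemma \ref{lemma:sqr}(ii)) finishes the proof in a few lines. Without such a classification your ``bounded collection of representations'' is not effectively bounded: a priori there could be many integers $\beta$ with $\Tr(\beta^2)\le \tfrac52 d+2$, and none of your proposed fallbacks is developed far enough to control them. If you supply the quadratic-form computation of $\Tr(\beta^2)$ in the $\omega_j$-basis, your argument closes and essentially becomes the paper's.
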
 
 
 This is proved in Propositions~\ref{cor:2squares} and \ref{cor:psquares}. 
 For other small composite values of $q$, we can also use Theorem~\ref{Thm:o2}. In particular, we see that all elements of $2\co_{K_q}^+$ are sums of squares also for $q=12,\ 20$ (e.g., $\Q(\sqrt{3})=\Q(\zeta_{12}+\zeta^{-1}_{12})$ and $\Q\left(\sqrt{(5+\sqrt{5})/2}\right)=\Q(\zeta_{20}+\zeta^{-1}_{20})$). 

 Thus, the first values for which we technically do not know whether all elements of $2\co_{K_q}^+$ are sums of squares are $q=21$, $q=27$, and $q=28$. (Although, practically, it is easy to check computationally all such low-degree cases.)  
 
We conclude the paper by linking the above result to Kitaoka's conjecture, the assertion that there can only exist finitely many totally real number fields with universal ternary quadratic forms. The conjecture is known to hold for all odd degree number fields \cite{EK}, and for number fields of bounded degree \cite[Theorem~3]{KY3}. We confirm here Kitaoka's conjecture for an infinite family of maximal totally real subfields of cyclotomic fields.

\begin{theorem}\label{thm:CK} Let $q\geq 5$ be an odd prime or a power of two.
	Let $K_q=\Q(\zeta_q+\zeta_q^{-1})$ be the maximal totally real subfield of the $q$-th cyclotomic field.

A ternary universal quadratic form exists over $K_q$ if and only if 
\begin{itemize}
	\item $q=5$ (then $K_q=\Q(\sqrt 5)$), or
	\item $q=8$ (then $K_q=\Q(\sqrt 2)$).
\end{itemize}
\end{theorem}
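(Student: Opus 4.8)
The backward implication is known: over $K_5=\Q(\sqrt 5)$ the sum of three squares $x^2+y^2+z^2$ is universal by Maa\ss{} \cite{Ma}, and a universal ternary quadratic form over $K_8=\Q(\sqrt 2)$ is likewise classical (see, e.g., \cite{CKR}). So the substance is the forward direction, and the plan is to funnel it into Theorem \ref{thm:C2}.

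Suppose $Q$ is a ternary quadratic form that is universal over $\co_{K_q}$. The \emph{key reduction} is to show that this forces every element of $2\co_{K_q}^+$ to be a sum of squares of integers of $K_q$; Theorem \ref{thm:C2} then immediately yields $q\in\{5,7,8\}$. To obtain the sum-of-squares property, observe first that $Q$ represents the totally positive unit $1$; the rank-one sublattice generated by a vector realizing this is unimodular, hence orthogonally splits off, so $Q\cong\langle 1\rangle\perp Q'$ with $Q'$ a binary totally positive definite form and $\langle 1\rangle\perp Q'$ again universal. Next one analyses the representations of the small totally positive integers $2,3,\dots$ by $\langle 1\rangle\perp Q'$: from $\alpha=x^2+Q'(y,z)$ one controls the admissible small values of $x^2$ (using that $Q'(y,z)$ is totally positive or zero, the latter only when $(y,z)=0$, together with the bound on totally positive units of small house coming from Theorem \ref{thm:hat}), which confines $Q'$ to a short explicit list of binary forms; for each such $Q'$, combined with the local conditions that universality of $\langle 1\rangle\perp Q'$ imposes at the dyadic primes and real places of $K_q$, one checks directly that $2\alpha$ is a sum of squares for every $\alpha\in\co_{K_q}^+$, i.e. that $K_q$ appears in the classification of Theorem \ref{Thm:o2}. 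Applying Theorem \ref{thm:C2} then leaves only $q\in\{5,7,8\}$.

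It remains to exclude $q=7$, namely to show that the totally real cubic field $K_7=\Q(\zeta_7+\zeta_7^{-1})$ of discriminant $49$ carries no universal ternary form, even though it does have the property that $2\co_{K_7}^+$ consists of sums of squares. Using the splitting $Q\cong\langle 1\rangle\perp Q'$ once more, the binary companion $Q'$ must represent the truant of $\langle 1\rangle$ over $\co_{K_7}$ — a specific small (indecomposable) totally positive integer read off from the known list of indecomposables of $K_7$ — which pins $Q'$ down to finitely many candidates of bounded determinant, and one verifies that for each resulting ternary form there is a totally positive integer (for instance, an indecomposable of sufficiently large norm) that it fails to represent. The main obstacle is the key reduction of the second paragraph: controlling the binary companion $Q'$ and, especially, the dyadic local behaviour tightly enough to deduce the sum-of-squares property is where the real work lies, whereas the backward direction is a citation and the exclusion of $q=7$ is a finite, if somewhat intricate, verification.
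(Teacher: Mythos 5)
The central step of your argument --- the ``key reduction'' asserting that the existence of a universal ternary form over $\co_{K_q}$ forces every element of $2\co_{K_q}^+$ to be a sum of squares --- is a genuine gap, and it is not how the paper proceeds. Your sketch (split off $\langle 1\rangle$, represent the small rational integers $2,3,\dots$, thereby ``confine $Q'$ to a short explicit list of binary forms'', then check the sum-of-squares property locally) cannot be completed as stated: the binary companion $Q'$ has coefficients in $\co_{K_q}$, a ring whose degree over $\Q$ is unbounded, and representing finitely many small rational integers only constrains a few values of $Q'$, not the form itself, so there is no finite list to check. You appear to be transplanting the reduction used for the lifting problem (Theorem \ref{cor:z-forms}), but that argument depends essentially on the coefficients being \emph{rational} integers so that the Mordell--Ko theorem applies to the Gram matrix of $2Q'$; no analogue is available for forms with coefficients in $\co_{K_q}$. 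Consequently the implication ``universal ternary form $\Rightarrow$ $2\co_{K_q}^+$ consists of sums of squares'' is unproven (and not claimed anywhere in the paper), so your appeal to Theorem \ref{thm:C2} does not get off the ground.

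The paper's actual route (Theorems \ref{thm:2-K} and \ref{thm:p-K}) is different in kind: it constructs explicit indecomposable, nonsquare totally positive integers ($\alpha_n$, $\gamma_n$ and the product $\alpha_n\gamma_n$ for $q=2^n$, $n>3$; $\alpha_p=2-\omega$ for odd primes $p>7$), splits off $\langle 1\rangle$ as you do, and then derives a contradiction from the fact that the binary companion must represent these elements --- analysing the decomposition of the discriminant $\alpha_n\gamma-\beta^2$ into indecomposables in the $2$-power case, and using the trace classification of small squares (Lemma \ref{lemma:ptrace}) together with the fact that $2\alpha_p$ is not a sum of squares in the prime case. The sum-of-squares statements (Propositions \ref{cor:2squares} and \ref{cor:psquares}) enter as \emph{ingredients} inside this case analysis, not as the target of a reduction. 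Finally, your separate treatment of $q=7$ is unnecessarily heavy: $K_7$ is cubic, so the nonexistence of a universal ternary form is immediate from the odd-degree result of Earnest--Khosravani, which is the one-line argument the paper uses.
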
 
 This is proved in Theorems~\ref{thm:2-K} and \ref{thm:p-K} by studying the representation of specific indecomposable integers. Theorems~\ref{thm:C2} and \ref{thm:CK} (and their proofs) are largely influenced by parts 5A and 5B of Scharlau's thesis \cite{Sc2}. 
 
In Section~\ref{s:2}, we introduce most of the necessary notation and conventions. 
Section~\ref{s:3} deals with the case of number fields with even integers represented by the sum of squares, Section~\ref{se:4} then proves Theorem~\ref{cor:z-forms},
and Section~\ref{s:4} expands on the quartic case. The second half of the paper concerns the real cyclotomic fields:
we deal with the powers of two in Section~\ref{s:6}, and with odd primes in Section~\ref{s:7}.

Let us conclude the Introduction with a tentative conjecture. As described at the end of Section~\ref{s:3}, we also searched for totally real number fields $K$ of degree $\leq 18$ in which all elements $2\co_K^+$ are sums of squares, but we did not find any in addition to the fields of degrees $\leq 4$ listed in Theorem~\ref{Thm:o2}. This suggests that this list may be complete for \textit{all} degrees, in which case there would also be no further fields in the situation of Corollary~\ref{th:exc}. The affirmative answer to the above conjectures would not directly lead to a similar conclusion to the lifting problem (that is, there are no further fields with universal $\Z$-forms besides those from Theorem \ref{cor:z-forms}) as there exist $\Z$-forms that are not represented by the sums of squares of integral linear forms in ranks greater than 5 (see, for example, \cite{Mo}). 

Finally, since this paper was finished and posted on arxiv in 2024, it has inspired several follow-up works that build on and expand our results: \cite{KK, KKK, KKL}.
 	
\section{Generalities}\label{s:2}
Let $K$ be a totally real number field of degree $d$ over $\Q$ with the ring of integers $\mathcal{O}_K$. Let $\sigma_1,\dots,\sigma_d:K\hookrightarrow\R$ be the distinct real embeddings of $K$ and let $\sigma=(\sigma_1,\dots,\sigma_d)^t:K\hookrightarrow\R^d$ be the corresponding embedding of $K$ into the Minkowski space. The \textit{norm} and \textit{trace} of $\alpha \in K$ are defined by $\Nr_{K/\Q}(\alpha)=\sigma_1(\alpha)\cdots\sigma_d(\alpha)$ and $\Tr_{K/\Q}(\alpha)=\sigma_1(\alpha)+\dots+\sigma_d(\alpha)$, respectively. Additionally, let $\house{\alpha}= \max_i(|\sigma_i(\alpha)|)$ be the \textit{house} of $\alpha$. As $\Nr_{K/\Q}(\alpha)\in \Z$ for all $\alpha \in \mathcal{O}_K$, if we also assume that $\alpha$ is nonzero, then it follows that $\house{\alpha}\ge 1$. 

An element $\alpha\in K$ is \textit{totally positive} if $\sigma_i(\alpha)>0$ for all $i$. The set of all totally positive elements of $K$ is denoted $K^+$; if $E$ is a subset of $K$, then $E^+=E\cap K^+$. We say that $\alpha$ is \textit{totally greater than} $\beta$ (denoted $\alpha\succ\beta$) if $\alpha-\beta\in K^+$, for $\alpha,\beta\in K$.  A totally positive integer $\alpha\in\co_K^+$ is \textit{indecomposable} if there does not exist $\beta\in\co_K^+$ such that $\alpha\succ\beta$.   An element $\alpha\in K$ is \textit{proper} if it does not lie in a proper subfield of $K$, that is, if $\Q(\alpha)=K$.

The maximal totally real subfield of the $q$-th cyclotomic field we denote by $K_q=\mathbb{Q}(\zeta_q+\zeta_q^{-1})$, where $\zeta_q=e^{2\pi i/q}$. Its degree is  $d=[K_q:\mathbb{Q}]=\frac{\varphi(q)}{2}$. When $q$ is $2^n$ (for $n\ge 3$) or an odd prime number $p$, the degree equals $d=2^{n-2}$ or $\frac{p-1}{2}$, respectively. We write $\mathcal{O}_q$ for the ring of integers of $K_q$.

We set $\omega=\zeta_q+\zeta_q^{-1}$ and $\omega_j=\zeta_q^j+\zeta_q^{-j}$ for $0\neq j\in\mathbb{Z}$, with $\omega_0=1$. It can be seen that $-1=\omega_1+\dots+\omega_{d}$ and $\house{ \omega_j }<2$. It is well known that $\mathcal{O}_q=\mathbb{Z}[\omega]$ (see, for example, \cite[Proposition 2.16]{Wa}). Moreover, $(\omega_0,\omega_1,\dots,\omega_{d-1})$ forms a $\mathbb{Z}$-basis of $\mathcal{O}_q$, for by induction on $j$, it can be easily shown that $\omega^j$ lies in the $\mathbb{Z}$-span of $1,\omega,\omega^2,\ldots,\omega^{j-1},\omega_j$.

For quadratic forms, we follow the notation and terminology from \cite{O1}. Specifically, let $V=K^n$ be $n$-ary \textit{quadratic space} over $K$ with its \textit{bilinear form} $B:V\times V\longrightarrow K$ and the corresponding \textit{quadratic form} $Q$, that is, $B(v,v)=Q(v)$. We use $\langle a_1,\ldots,a_k \rangle$ to denote the \textit{diagonal} quadratic form $a_1 x_1^2+\cdots+a_k x_k^2$ where $a_i\in K.$ Further, we denote the \textit{orthogonal sum} of two quadratic forms $Q_1$ and $Q_2$ by $Q_1\perp Q_2$. A quadratic form $Q$ can be associated with the \textit{Gram matrix} $M=(B(e_i,e_j)),$ where $e_i$s are the standard basis. We use $Q\cong H$ to denote that quadratic forms $Q$ and $H$ are \textit{integrally equivalent}, i.e., there exists an invertible integer matrix $X$ such that $M=XNX^t,$ where $M$ and $N$ are Gram matrices of $Q$ and $H$, respectively. We may lightly abuse the notation and write $Q\cong M$ to indicate that the quadratic form $Q$ is associated with the Gram matrix $M$.

The quadratic form is \textit{totally positive definite} if $Q(v)\succ 0$ for all nonzero $v \in V$. We say that $Q$ \textit{represents} an algebraic integer $\alpha$ (over $\co_K$) if there exists $v \in \co_K^n$ such that $Q(v)=\alpha$. A quadratic form is \textit{integral} if $Q(v)\in\co_K$ for every $v \in \co_K^n$, and \textit{classical} if $B(v,w)\in\co_K$ for every $v,w \in \co_K^n$. A vector $v\in \co^n_K$ is \textit{primitive} if $v\not= \beta w$ for every $w\in \co^n_K$ and nonunit $\beta\in \co_K$. We say that a totally positive definite quadratic form is \textit{universal} (over $\co_K$ or $K$) if it represents all elements of $\co_K^+$. 

A useful and well-known result is the ``splitting-off units'' (see, e.g., \cite[82:15]{O1} or \cite[Proposition 3.4]{Ka}  for a self-contained short proof).

\begin{lemma}\label{lemma:unit}
	Let $K$ be a totally real number field and $Q$ be a classical quadratic form over the ring of integers of $K$ that represents a unit $u$. Then
	\[Q\cong \langle u \rangle \perp Q',\] where $Q'$ is a classical quadratic form of rank smaller than $Q$.
\end{lemma}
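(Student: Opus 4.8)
The plan is to represent $Q$ as a form on the lattice $L=\mathcal{O}_K^n$ with bilinear form $B$, pick $v\in L$ with $Q(v)=B(v,v)=u$, and produce the splitting by a single Gram--Schmidt step after first arranging that $v$ belongs to an $\mathcal{O}_K$-basis of $L$.

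First I would check that $v$ extends to an $\mathcal{O}_K$-basis of $L$. Writing $v=(v_1,\dots,v_n)$ in the standard basis $e_1,\dots,e_n$ and expanding $u=B(v,v)=\sum_{i,j}v_iv_j\,B(e_i,e_j)$, classicality of $Q$ gives $B(e_i,e_j)\in\mathcal{O}_K$, so $u$ lies in the ideal $(v_1,\dots,v_n)^2$; since $u$ is a unit this forces $(v_1,\dots,v_n)=\mathcal{O}_K$. Hence $v$ is unimodular over the Dedekind domain $\mathcal{O}_K$ and can be completed to a basis $v=f_1,f_2,\dots,f_n$ of $L$. I expect this to be the only genuinely delicate point: it is exactly here that the hypothesis ``$u$ is a \emph{unit}'' (rather than a merely primitive value of $Q$) is used — over $\mathcal{O}_K$ a primitive vector need not in general extend to a basis.

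Next I would orthogonalize $f_2,\dots,f_n$ against $v$. For $i\ge 2$ set $f_i'=f_i-u^{-1}B(v,f_i)\,v$. Because $u^{-1}\in\mathcal{O}_K$ and $B(v,f_i)\in\mathcal{O}_K$ (classicality again), each $f_i'\in L$, and $v=f_1,f_2',\dots,f_n'$ is still an $\mathcal{O}_K$-basis of $L$, as it differs from $f_1,\dots,f_n$ by a unipotent change of variables. By construction $B(v,f_i')=B(v,f_i)-u^{-1}B(v,f_i)\,B(v,v)=0$ for all $i\ge 2$.

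Finally I would read off the result: in the basis $(f_1,f_2',\dots,f_n')$ the Gram matrix of $Q$ is the block sum of $(u)$ and $N'=\big(B(f_i',f_j')\big)_{2\le i,j\le n}$, which says precisely that $Q\cong\langle u\rangle\perp Q'$, where $Q'$ has Gram matrix $N'$. All entries of $N'$ lie in $\mathcal{O}_K$, so $Q'$ is classical, and its rank equals $n-1$, which is strictly smaller than $n$, the rank of $Q$. (If one also wants $Q'$ totally positive definite whenever $Q$ is, that follows immediately from the splitting, but it is not part of the statement.)
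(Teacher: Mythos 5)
Your proof is correct; the paper does not prove this lemma itself but only cites \cite[Proposition 3.4]{Ka}, and your argument (unimodularity of the representing vector, forced by classicality together with $u$ being a unit, then completion to a basis over the Dedekind domain $\mathcal{O}_K$, then a single integral Gram--Schmidt step using $u^{-1}\in\mathcal{O}_K$) is exactly the standard ``splitting off a unit'' proof. The one step you leave as an assertion---that a unimodular vector in $\mathcal{O}_K^n$ extends to a basis---is the standard Steinitz-class fact (the unimodular functional splits off $\mathcal{O}_K v$, and the complementary summand is stably free, hence free over a Dedekind domain), so there is no gap.
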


Later we shall use the following results that show that elements of small trace and norm are quite special.

\begin{theorem}\label{thm:1}\label{cor:3/2}\label{cor:5/2}
	
	 Let $K$ be a totally real number field of degree $d$ and take $\gamma\in\co_K^+$.
	
a) If $\Tr_{K/\Q}(\gamma) \le\frac{3}{2}d$, then $\gamma=1$ or $\phi+1$, where $\phi^2-\phi-1=0$.

b) If $\Tr_{K/\Q}(\gamma) <\frac{3}{2}d$, then $\gamma=1$.

c) If $\Tr_{K/\Q}(\gamma) <\frac{5}{2}d$, then $\gamma$ is indecomposable or $\gamma \in \Z$.
\end{theorem}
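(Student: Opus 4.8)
First I would reduce to the case where $\gamma$ is proper, i.e. $K=\Q(\gamma)$: replacing $K$ by $\Q(\gamma)$ multiplies both $\Tr_{K/\Q}(\gamma)$ and $d$ by $[K:\Q(\gamma)]$, so it changes neither the hypotheses nor the conclusions, and afterwards the conjugates $\gamma_1,\dots,\gamma_d$ of $\gamma$ are pairwise distinct and positive. I record the basic bound: for any nonzero $\delta\in\co_K^+$, $\Nr_{K/\Q}(\delta)$ is a positive rational integer, so AM--GM on the $d$ positive conjugates of $\delta$ gives $\Tr_{K/\Q}(\delta)\ge d\,\Nr_{K/\Q}(\delta)^{1/d}\ge d$, with equality iff $\delta=1$. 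I would then deduce (b) and (c) from (a). For (b): if $\Tr_{K/\Q}(\gamma)<\tfrac32 d$, then (a) leaves the options $\gamma=1$ or $\gamma=\phi+1$; but $\phi+1$ is a root of $x^2-3x+1$, so whenever it lies in $K$ it has trace exactly $\tfrac32 d$, contradicting $\Tr_{K/\Q}(\gamma)<\tfrac32 d$, hence $\gamma=1$. For (c): if $\gamma\notin\Z$ is decomposable, write $\gamma=\alpha+\beta$ with $\alpha,\beta\in\co_K^+$; since $\gamma\notin\Z$, not both of $\alpha,\beta$ equal $1$, say $\beta\ne1$, so $\Tr(\beta)\ge\tfrac32 d$ by (b), while $\Tr(\alpha)\ge d$ by the basic bound; thus $\Tr_{K/\Q}(\gamma)\ge\tfrac52 d$, and (c) follows by contraposition.

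Part (a) is the real content. The basic bound already gives $\Tr_{K/\Q}(\gamma)\ge d$ with equality iff $\gamma=1$, so assume $\gamma\ne1$. Then $\gamma\notin\Z$, for otherwise $\gamma\ge2$ and $\Tr_{K/\Q}(\gamma)=\gamma d\ge2d>\tfrac32 d$ contradicts the hypothesis; hence $\gamma$, $\gamma-1$, $\gamma-2$ are nonzero algebraic integers, and taking logarithms of their nonzero (so absolute value $\ge1$) integer norms gives
\[
\sum_{i=1}^{d}\log\abs{\gamma_i-k}=\log\abs{\Nr_{K/\Q}(\gamma-k)}\ge0\qquad(k=0,1,2).
\]
The plan is to feed these three inequalities into a Smyth-type auxiliary function: I want constants $c_0,c_1,c_2>0$ with
\begin{equation}\label{eq:trace-aux}
x-\tfrac32\ge c_0\log x+c_1\log\abs{x-1}+c_2\log\abs{x-2}\qquad\text{for all }x\in(0,\infty)\setminus\{1,2\},
\end{equation}
and with equality exactly at the two points $x_\pm=\tfrac{3\pm\sqrt5}{2}$. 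Granting \eqref{eq:trace-aux}, summing it over $x=\gamma_1,\dots,\gamma_d$ and using the three norm inequalities yields $\Tr_{K/\Q}(\gamma)\ge\tfrac32 d$; combined with the hypothesis $\Tr_{K/\Q}(\gamma)\le\tfrac32 d$ this forces equality throughout, so every $\gamma_i\in\{x_+,x_-\}$. Since the $\gamma_i$ form a single Galois orbit and $x_\pm\notin\Q$, this means $d=2$ and $\gamma$ is a root of $x^2-3x+1$, i.e. $\gamma=\phi+1$.

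It then remains to choose the $c_j$ and verify \eqref{eq:trace-aux}. Writing $h(x)$ for the left side minus the right side of \eqref{eq:trace-aux}, I would impose that $h$ have a double zero at each of $x_\pm$. Using $\phi^2=\phi+1$ one checks that $h(x_+)+h(x_-)$ vanishes for every choice of the $c_j$, so the four conditions collapse to the three linear equations $c_0=c_1+c_2$, $c_1+2c_2=1$, $2c_0+c_1-c_2=\sqrt5/(2\log\phi)$, with solution $c_2=\tfrac35-\tfrac{\sqrt5}{10\log\phi}$, $c_1=1-2c_2$, $c_0=c_1+c_2$, all positive since $\sqrt5/6<\log\phi<\sqrt5$. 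With these values, clearing denominators shows $x(x-1)(x-2)h'(x)$ is a monic cubic, so $h'$ has at most three zeros off $\{0,1,2\}$; a sign count at the endpoints of $(0,1)$, $(1,2)$, $(2,\infty)$ shows it has exactly one in each interval, so $h$ attains a unique minimum on each. Two of these minima (at $x_\pm$) are $0$ by construction, and the remaining one, in $(1,2)$, should be strictly positive by a direct estimate; this gives $h\ge0$ with equality only at $x_\pm$, which is \eqref{eq:trace-aux}.

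The step I expect to be the main obstacle is precisely this last verification. The constants $c_j$ are pinned down entirely by the coincident double-root conditions at $x_\pm$, but confirming that $h$ develops no spurious sign change — the margin is thinnest on $(1,2)$ and just past $x_+$ — requires an honest, if elementary, analysis of the cubic $x(x-1)(x-2)h'(x)$ and of the values of $h$ at its interior minima.
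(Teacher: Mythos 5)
Your reductions are sound and your part (c) is essentially identical to the paper's: the paper also writes $\gamma=\alpha+\beta$, notes $\Tr_{K/\Q}(\alpha)\ge d$ forces $\Tr_{K/\Q}(\beta)<\tfrac32 d$, applies (b) to get $\beta=1$ (and symmetrically $\alpha=1$), and contradicts $\gamma\notin\Z$. The real divergence is in parts (a) and (b): the paper does not prove them at all but cites them as Theorem III of Siegel's 1945 paper \emph{The trace of totally positive and real algebraic integers}, whereas you give a self-contained proof of (a) by Smyth's auxiliary-function method with the resolvent $h(x)=x-\tfrac32-c_0\log x-c_1\log\lvert x-1\rvert-c_2\log\lvert x-2\rvert$. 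I checked your linear system: the identity $h(x_+)+h(x_-)=0$ does hold for all $c_j$ (each of $x_\pm$, $x_\pm-1$, $x_\pm-2$ has norm $\pm1$), the three equations and the solution $c_2=\tfrac35-\tfrac{\sqrt5}{10\log\phi}\approx 0.1353$, $c_1\approx 0.7293$, $c_0\approx 0.8647$ are correct, and the one step you flag as outstanding does go through: the cubic $x(x-1)(x-2)h'(x)$ has its middle root at $x_{\mathrm{mid}}=5-2c_2-3\approx 1.7293$ (using that the three roots sum to $5-2c_2$ and the two known ones sum to $3$), where $h(x_{\mathrm{mid}})\approx 0.163>0$, so $h\ge 0$ with equality exactly at $x_\pm$ and the equality analysis forces every conjugate of $\gamma$ into $\{x_+,x_-\}$, i.e.\ $\gamma=1$ or $\phi+1$. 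So your argument is complete once that numerical value is recorded. What your route buys is independence from the citation (and it is the modern standard proof of this threshold in the Schur--Siegel--Smyth trace problem, postdating Siegel's original argument); what it costs is the delicate choice and verification of the constants $c_j$, which the paper avoids entirely by quoting Siegel. One stylistic caution: since the constants are pinned down by transcendental equations, the positivity of the interior minimum on $(1,2)$ should be stated with an explicit rigorous bound (e.g.\ interval arithmetic on $h$ at $x_{\mathrm{mid}}$) rather than "should be strictly positive," but there is ample margin, so this is bookkeeping rather than a gap.
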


\begin{proof}
 Parts a) and b)  are {\cite[Theorem III]{Si2}}.
	
c)	Assume that $\gamma\not\in\Z$ and that $\gamma$ decomposes as $\gamma=\alpha+\beta$, $\alpha,\beta\in\co_K^+$. As $\Tr_{K/\Q}(\alpha)\geq d$, we have $\Tr_{K/\Q}(\beta)< \frac{3}{2}d$, and so $\beta=1$ by the previous corollary. Symmetrically we have $\alpha=1$, which is in contradiction with our assumption that $\gamma\not\in\Z$.	
\end{proof}

\begin{prop}\label{lemma:norm}  Let $K$ be a totally real number field of degree $d$.
	Let $c>1$ be such that for every $\alpha \in\co_K^+$ we have: if $\Tr_{K/\Q}(\alpha) <cd$, then $\alpha=1$. Let $\gamma\in\co_K^+$ be a primitive element (that is, $n\nmid\gamma$ for all $n\in\Z_{>0}$) such that $\Nr_{K/\Q}(\gamma) <2^{d-1}(1+c)$. Then $\gamma$ is indecomposable.
\end{prop}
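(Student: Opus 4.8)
I would argue by contradiction: suppose $\gamma$ decomposes as $\gamma=\alpha+\beta$ with $\alpha,\beta\in\co_K^+$, and deduce $\Nr_{K/\Q}(\gamma)\ge 2^{d-1}(1+c)$, contradicting the hypothesis on $\gamma$. Before anything else I would record one consequence of the hypothesis on $c$: applying it to $2\in\co_K^+$, which has $\Tr_{K/\Q}(2)=2d$, forces $c\le 2$, and this bound will be used below.

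\textbf{Reduction to a one-variable situation.} By AM--GM at each real embedding, $\sigma_i(\gamma)=\sigma_i(\alpha)+\sigma_i(\beta)\ge 2\sqrt{\sigma_i(\alpha)\sigma_i(\beta)}$, and multiplying over $i$ gives $\Nr_{K/\Q}(\gamma)\ge 2^d\sqrt{\Nr_{K/\Q}(\alpha)\Nr_{K/\Q}(\beta)}$. If $\Nr_{K/\Q}(\gamma)<2^{d-1}(1+c)$, then $\Nr_{K/\Q}(\alpha)\Nr_{K/\Q}(\beta)<\bigl(\tfrac{1+c}{2}\bigr)^2\le\tfrac94$, and since this is a positive integer it equals $1$ or $2$; in either case one of $\alpha,\beta$ — say $\alpha$ — has norm $1$, hence is a unit of $\co_K$. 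Put $\delta:=\alpha^{-1}\beta$. Then $\delta\in\co_K$ (because $\alpha$ is a unit) and $\delta\in\co_K^+$ (it is a ratio of totally positive elements), and $\gamma=\alpha(1+\delta)$, so $\Nr_{K/\Q}(\gamma)=\Nr_{K/\Q}(1+\delta)=\prod_i\bigl(1+\sigma_i(\delta)\bigr)$. Primitivity of $\gamma$ excludes $\delta=1$ (that would give $\gamma=2\alpha$, so $2\mid\gamma$), so the hypothesis on $c$ yields $\Tr_{K/\Q}(\delta)\ge cd$, while $\Nr_{K/\Q}(\delta)=\Nr_{K/\Q}(\beta)\ge 1$.

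\textbf{The key inequality.} Everything now reduces to the following elementary statement, which I would isolate as a lemma: \emph{if $x_1,\dots,x_d>0$ satisfy $\prod_i x_i\ge 1$ and $\sum_i x_i\ge cd$ with $1<c\le 2$, then $\prod_i(1+x_i)\ge 2^{d-1}(1+c)$}; applying it to $x_i=\sigma_i(\delta)$ produces $\Nr_{K/\Q}(\gamma)\ge 2^{d-1}(1+c)$, the desired contradiction. I would prove this lemma by induction on $d$: if every $x_i\ge 1$, delete the smallest one — the remaining $d-1$ values still satisfy $\prod\ge 1$ and $\sum\ge c(d-1)$ (checking the two cases $\min x_i\le c$ and $\min x_i>c$), and the deleted factor contributes $\ge 2$; if some $x_j<1$, pair it with the largest $x_k$ using $(1+x_j)(1+x_k)\ge 2(1+x_jx_k)$, valid because $(1-x_j)(1-x_k)\le 0$, and recurse on the $(d-1)$ values obtained by replacing $x_j,x_k$ with $x_jx_k$, disposing separately of the case where $x_k$ is large enough to conclude outright. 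Alternatively, and perhaps more cleanly, one minimizes $\prod(1+x_i)$ over the constraint region: Lagrange multipliers show every extremum is two-valued, in fact of the shape $(p,q,\dots,q)$ with one large value and $d-1$ equal small values, which reduces the lemma to a single one-variable inequality that one then verifies for $1<c\le 2$ (this is exactly where $c\le 2$ enters).

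\textbf{Main obstacle.} I expect the lemma to be the one genuinely delicate point. The naive ``remove one variable'' induction fails precisely when some $x_i<1$, since then the corresponding factor $1+x_i<2$, so small and large conjugates of $\delta$ must be handled together; making the pairing argument airtight when the largest variable is big — equivalently, carrying out the extremal-configuration reduction and checking the resulting one-variable polynomial inequality — is where the real work lies. Everything else (the AM--GM estimate, extracting a unit summand, and the passage to $\gamma=\alpha(1+\delta)$) is routine.
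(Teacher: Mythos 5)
Your reduction is correct and takes a genuinely different route from the paper. The paper never extracts a unit summand: it expands $\Nr_{K/\Q}(\alpha+\beta)=\prod_i(\sigma_i(\alpha)+\sigma_i(\beta))$ into $2^d$ products $X_1\cdots X_d$ with $X_i\in\{\sigma_i(\alpha),\sigma_i(\beta)\}$, pairs each product with the one obtained by switching the first factor, and notes that at most one member of each pair can equal $1$ (otherwise $\sigma_1(\alpha)=\sigma_1(\beta)$, forcing $\alpha=\beta$, which primitivity excludes); the trace hypothesis then gives each of the $2^{d-1}$ pairs a contribution of at least $1+c$, and the bound drops out with no optimization at all. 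Your preliminary steps --- the observation that $c\le 2$, the AM--GM bound $\Nr_{K/\Q}(\gamma)\ge 2^d\sqrt{\Nr_{K/\Q}(\alpha)\Nr_{K/\Q}(\beta)}$ forcing one summand to be a unit, and the passage to $\gamma=\alpha(1+\delta)$ with $\delta\in\co_K^+$, $\delta\ne 1$, $\Tr_{K/\Q}(\delta)\ge cd$, $\Nr_{K/\Q}(\delta)\ge 1$ --- are all sound, and they have the merit of applying the trace hypothesis only to the single element $\delta\in\co_K$ rather than to products of conjugates living in the Galois closure.

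The problem is that after this reduction the entire content of the proposition sits in your unproved lemma, and neither sketch you give closes it. The ``delete the smallest variable'' induction works when all $x_i\ge 1$, but in the case some $x_j<1$ the merge of $x_j$ with the largest $x_k$ destroys the sum constraint: the new sum exceeds $c(d-1)$ only if $(1-x_j)(x_k-1)\le c-1$, which fails already for moderate $x_k$ (e.g.\ $x_j=\tfrac12$, $x_k=10$, $c=\tfrac32$), and the fallback ``conclude outright when $x_k$ is large'' only works from roughly $x_k\ge 1+2c$ on, leaving an uncovered middle range. The Lagrange route asserts without argument that the minimizing configuration has the shape $(p,q,\dots,q)$ --- stationarity only gives at most two distinct values, not the multiplicity pattern --- and then defers the final one-variable verification. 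This last verification is not a formality: the lemma is \emph{false} without the constraint $c\le 2$ (for $c=10$, $d=3$, the point $(29.63,0.186,0.186)$ satisfies $\sum x_i\ge 30$ and $\prod x_i\ge 1$ but has $\prod(1+x_i)\approx 43.1<44=2^{d-1}(1+c)$), and it is exactly sharp at $d=2$, $c=2$. So the inequality you need lives right at the edge of being true, the hypothesis $c\le 2$ must enter in an essential and currently unlocated way, and the proposal as it stands has a genuine gap precisely at the point you flag as ``where the real work lies.''
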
	

Note that by part b) of Theorem \ref{cor:3/2} we can always take $c=3/2$.

\begin{proof}
	Assume for contradiction that $\gamma=\alpha+\beta$, $\alpha\neq\beta$. Then
	\[\Nr_{K/\Q}(\alpha+\beta)=\prod_i(\sigma_i(\alpha)+\sigma_i(\beta)).\]
	The product distributes as the sum of $2^d$ terms, each of which is of the form $X_1\cdots X_d$, where $X_i=\sigma_i(\alpha)$ or $\sigma_i(\beta)$. By the symmetry of this sum, we can write it as$$\frac 1d \sum \Tr_{K/\Q} (X_1\cdots X_d),$$
where we are summing over the same $2^d$ terms as before.
	
	Using the trace estimate from the hypothesis of the proposition, we want to show that many of the products $X_1\cdots X_d=1$.
	However, if we get $X_1\cdots X_d=Y_1\cdots Y_d=1$ with $X_i=Y_i$ for all $i\neq j$ and $X_j=\sigma_j(\alpha)$, $Y_j=\sigma_j(\beta)$, this would imply that $\sigma_j(\alpha)=\sigma_j(\beta),$ and so $\alpha=\beta$, a contradiction.
	
	So now this becomes a combinatorial exercise of suitably pairing up the products $X_1\cdots X_d$.
	
	As a first attempt, we can consider $\sigma_1(\alpha)X_2\cdots X_d$ and $\sigma_1(\beta)X_2\cdots X_d$: there are $2^{d-1}$ such pairs, and for each pair, at most one element equals 1.
	
	Thus, for each pair, at most one element has trace $d$, and so 

 \[\Tr_{K/\Q}(\sigma_1(\alpha)X_2\cdots X_d)+\Tr_{K/\Q}(\sigma_1(\beta)X_2\cdots X_d)\geq (1+c)d.\]
	
 In total, we get 
\[\Nr_{K/\Q}(\alpha+\beta)=\prod_i(\sigma_i(\alpha)+\sigma_i(\beta))=\frac 1d \sum \Tr_{K/\Q} (X_1\cdots X_d)\geq 2^{d-1}(1+c). \qedhere\]
\end{proof}
Note that Proposition \ref{lemma:norm} is better than \cite[Lemma~2.1b]{KY1} as soon as $c>1$. Further, if just one product (without loss of generality) \[\sigma_1(\alpha)\cdots\sigma_i(\alpha)\sigma_{i+1}(\beta)\cdots\sigma_d(\beta)=1\] with $1\leq i<d$, then for $\kappa:=\alpha/\beta$ we have
\[\sigma_1(\kappa)\cdots\sigma_i(\kappa)=\frac 1{\Nr_{K/\Q}(\beta)}\in\Q,\]
and so $\kappa$ lies in a proper subfield of $K$. In particular, if $K$ is primitive (has no proper subfields), then elements of the norm $\Nr_{K/\Q}(\gamma) <(2^{d}-2)c+2$ must be indecomposable.

Finally, the above may be improved by replacing the trace condition with the minimum length of proper totally positive units in the number field, where the length of a totally positive integer $\alpha$ is $\Nr_{\Q(\alpha)/\Q}(\alpha+1)$. (For example, for almost all totally positive integers of degree $d$, the length is greater than $2.364950^d$ \cite[Theorem~1]{MW}. There are also bounds for length in terms of the trace, see \cite[Proposition~1]{FSE}.)

Finally, the following norm bound will be useful.

\begin{lemma}[{\cite[3.1]{O2}}]\label{lemma:norm_bound} Let $K$ be a totally real number field of degree $d$.
For all $\alpha_1,\alpha_2\in \co_K^+$ we have \[\Nr(\alpha_1+\alpha_2)^{1/d}\ge \Nr(\alpha_1)^{1/d}+\Nr(\alpha_2)^{1/d},\]
with equality if and only if $\alpha_1/\alpha_2\in\Q$.
\end{lemma}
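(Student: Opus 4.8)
The statement to prove is Lemma \ref{lemma:norm_bound}, the superadditivity of the normalized norm $\Nr(\cdot)^{1/d}$ on totally positive integers. Although it is attributed to \cite[3.1]{O2}, here is how I would prove it directly.

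\medskip

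\textbf{Approach.} The inequality is a pointwise consequence of the arithmetic–geometric mean inequality (equivalently, the superadditivity of the geometric mean), applied at the level of the real embeddings. Write $x_i = \sigma_i(\alpha_1) > 0$ and $y_i = \sigma_i(\alpha_2) > 0$ for $i = 1, \dots, d$, so that $\Nr(\alpha_1) = \prod_i x_i$, $\Nr(\alpha_2) = \prod_i y_i$, and $\Nr(\alpha_1 + \alpha_2) = \prod_i (x_i + y_i)$. The claim is precisely
\[
\Bigl(\prod_{i=1}^d (x_i + y_i)\Bigr)^{1/d} \ge \Bigl(\prod_{i=1}^d x_i\Bigr)^{1/d} + \Bigl(\prod_{i=1}^d y_i\Bigr)^{1/d},
\]
which is the classical Mahler/Minkowski superadditivity of the geometric mean and holds for arbitrary positive reals, with no number-theoretic input.

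\medskip

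\textbf{Key steps.} First, reduce to the case where neither product vanishes; since $\alpha_1, \alpha_2 \in \co_K^+$ are totally positive, all $x_i, y_i$ are strictly positive, so both normalized norms are positive and we may divide through. Dividing the desired inequality by $\bigl(\prod_i(x_i+y_i)\bigr)^{1/d}$, it becomes
\[
\Bigl(\prod_{i=1}^d \frac{x_i}{x_i + y_i}\Bigr)^{1/d} + \Bigl(\prod_{i=1}^d \frac{y_i}{x_i + y_i}\Bigr)^{1/d} \le 1.
\]
Now set $t_i = \tfrac{x_i}{x_i+y_i} \in (0,1)$, so that $\tfrac{y_i}{x_i+y_i} = 1 - t_i$. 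The inequality to prove is
\[
\Bigl(\prod_{i=1}^d t_i\Bigr)^{1/d} + \Bigl(\prod_{i=1}^d (1-t_i)\Bigr)^{1/d} \le 1,
\]
and each of the two terms is bounded by its corresponding arithmetic mean via AM–GM: $\bigl(\prod_i t_i\bigr)^{1/d} \le \tfrac1d\sum_i t_i$ and $\bigl(\prod_i (1-t_i)\bigr)^{1/d} \le \tfrac1d\sum_i (1-t_i)$. Adding these two estimates gives $\tfrac1d\sum_i t_i + \tfrac1d\sum_i(1-t_i) = 1$, which is exactly what is needed. Unwinding the substitution yields the lemma.

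\medskip

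\textbf{Main obstacle.} There is essentially no obstacle: the only thing to be careful about is the normalization in the AM–GM step (applying it to the two complementary products $\prod t_i$ and $\prod(1-t_i)$ separately and then summing, rather than trying to apply it to a single sequence), and noting that total positivity of $\alpha_1,\alpha_2$ is what guarantees all the quantities $x_i, y_i$ are positive so that the divisions and the substitution $t_i \in (0,1)$ are legitimate. Equality holds precisely when all $t_i$ are equal, i.e. when $\sigma_i(\alpha_1)/\sigma_i(\alpha_2)$ is independent of $i$, which for algebraic integers forces $\alpha_1$ and $\alpha_2$ to be rational multiples of a common element; this refinement is not needed for the statement but is worth a remark if the equality case is used later.
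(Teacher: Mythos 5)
Your proof is correct. The paper itself gives no argument for this lemma --- it is quoted directly from O'Meara \cite[3.1]{O2} --- so there is no internal proof to compare against; your normalization-plus-AM--GM argument is the standard and complete way to establish the superadditivity of the geometric mean, and your observation that total positivity is exactly what makes all the $\sigma_i(\alpha_j)$ strictly positive (so the division and the substitution $t_i\in(0,1)$ are legitimate) is the only number-theoretic input needed. The equality-case remark is also accurate, though, as you note, not needed for the statement.
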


\section{Multiples of 2 represented as sums of squares}\label{s:3}

We start with a lemma that states that for odd-degree number fields, if all even totally positive integers are sums of squares, then all totally positive units are squares.
\begin{lemma} 
Let $K$ be a totally real number field of odd degree. If there exists a nonsquare unit $ \varepsilon \in \co_K^+$, then $2 \varepsilon$ is not a sum of squares. 
 \end{lemma}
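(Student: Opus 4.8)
The plan is a proof by contradiction. Suppose $2\varepsilon=\alpha_1^2+\dots+\alpha_n^2$ with $\alpha_i\in\co_K$; discarding any zero summands we may assume every $\alpha_i\neq 0$, and $n\ge 1$ since $\varepsilon\neq 0$. The crucial first step is to bound $n$ by means of the norm. As $\varepsilon$ is a totally positive unit, $\Nr(\varepsilon)=1$, so $\Nr(2\varepsilon)=2^d$ and $\Nr(2\varepsilon)^{1/d}=2$. Each $\alpha_i^2$ lies in $\co_K^+$ and $\abs{\Nr(\alpha_i)}\ge 1$, so iterating Lemma~\ref{lemma:norm_bound} gives
\[
2=\Nr(2\varepsilon)^{1/d}=\Nr\Bigl(\textstyle\sum_{i=1}^n\alpha_i^2\Bigr)^{1/d}\ \ge\ \sum_{i=1}^n\Nr(\alpha_i^2)^{1/d}=\sum_{i=1}^n\abs{\Nr(\alpha_i)}^{2/d}\ \ge\ n,
\]
so $n\le 2$. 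It then suffices to rule out $n=1$ and $n=2$.

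If $n=1$, then $2^d=\Nr(2\varepsilon)=\Nr(\alpha_1)^2$ is a perfect square in $\Z$, which is impossible because $d$ is odd; this is the one place the parity of $d$ is used, and it has to enter somewhere, since the statement genuinely fails in even degree (for example $2(2+\sqrt 3)=(1+\sqrt 3)^2$ in $\Q(\sqrt 3)$). If $n=2$, the displayed chain must be an equality, so $\abs{\Nr(\alpha_1)}=\abs{\Nr(\alpha_2)}=1$, that is, $\alpha_1$ and $\alpha_2$ are units. Put $u=\alpha_1\alpha_2^{-1}\in\co_K^\times$; then $2\varepsilon=\alpha_2^2(u^2+1)$, and taking norms once more,
\[
2^d=\Nr(2\varepsilon)=\prod_{i=1}^d\bigl(\sigma_i(u)^2+1\bigr)\ \ge\ \prod_{i=1}^d 2\abs{\sigma_i(u)}=2^d\,\abs{\Nr(u)}=2^d,
\]
using $x^2+1\ge 2\abs{x}$ for real $x$. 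Equality forces $\sigma_i(u)^2=1$ for all $i$, so $u^2$ has all conjugates equal to $1$, hence $u^2=1$ and $u=\pm 1$. Then $\alpha_1=\pm\alpha_2$, so $2\varepsilon=2\alpha_1^2$ and $\varepsilon=\alpha_1^2$ is the square of a unit --- contradicting that $\varepsilon$ is a nonsquare.

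I expect the main obstacle to be the very first reduction: without the sharp superadditivity of $\Nr(\,\cdot\,)^{1/d}$ on $\co_K^+$ there is no a priori control on the number of squares, and the problem then looks like a delicate lattice question rather than an arithmetic one. Once $n\le 2$ is known the two remaining cases are short; the only one requiring a further idea is $n=2$, where I pass to the unit $u=\alpha_1/\alpha_2$ and run a second termwise estimate on $\Nr(u^2+1)$ to force $u=\pm 1$ --- this is cleaner than trying to use an equality characterization of Lemma~\ref{lemma:norm_bound}, which is not stated here.
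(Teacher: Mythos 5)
Your proof is correct and follows essentially the same route as the paper: the superadditivity of $\Nr(\cdot)^{1/d}$ from Lemma \ref{lemma:norm_bound} limits the representation to one or two nonzero squares, the single-square case is excluded because an element of norm $\pm 2^{d/2}$ cannot exist for odd $d$, and the two-square case forces $\varepsilon$ to be the square of a unit. The only cosmetic difference is in the two-square case: the paper invokes the equality condition of the superadditivity inequality to conclude $\beta_1^2=\beta_2^2=\varepsilon$ directly, whereas you first extract that both summands are units and then rederive the same conclusion via a second termwise estimate on $\Nr(u^2+1)$ --- both are valid.
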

 \begin{proof}
Using Lemma \ref{lemma:norm_bound}, it is obvious that the only possibilities for $2 \varepsilon$ to be represented as the sum of squares are $2 \varepsilon=\beta_1^2+\beta_2^2$ or $\beta^2$. This follows from the fact that $2=\Nr_{K/\Q}(2 \varepsilon)^{1/d}\ge \sum^k \Nr_{K/\Q}(\beta_i)^{2/d}\ge k$, for nonzero $\beta_i \in \co_K$, with equality on the left if and only if all $\beta^2_i$ are equal. If $2 \varepsilon=\beta_1^2+\beta_2^2$, then by the above inequality we have that $\beta^2_1=\beta^2_2= \varepsilon.$ Otherwise, we have that there exists an element of norm $2^{d/2}$, which is impossible for odd-degree number fields.
 \end{proof}
    
	Let us examine representing elements of $2\co_K^+$ as the sums of squares. Let $X>0$ and define $E=E_X$ as the subfield of $K$ that is generated (over $\Q$) by elements $\alpha\in\co_K$ with $\house{\alpha}<X$. 
	
	If $E\neq K$, let $Y$ be the smallest real number such that there is $\beta_0\in\co_K\setminus E$ with $\house{\beta_0}=Y$. By adding a suitable  rational integer $n$ to $\beta_0$, we obtain $\beta=\beta_0+n\in\co_K^+\setminus E$ 
	with $\house{\beta}<2Y+1$.
	
	Assume that $2\beta=\sum x_i^2$. We have $x_i\not\in E$ for some $i$ (as $\beta\not\in E$), and so
	$$4Y+2>\house{2\beta}\geq \house{x_i^2}\geq Y^2.$$
	This is possible only if $Y\leq 2+\sqrt 6$. With the given bound we can adjust the choice for $n$ in construction of $\beta$. Specifically, we can consider $\beta_0+5$, which yields the inequality
    \[2Y+10\ge Y^2,\]
    that is satisfied for $Y\le 1+\sqrt{11}$, and so also $X\leq 1+\sqrt{11}$. However, we can choose $X$ arbitrarily, and so we get a contradiction if $E_{1+\sqrt{11}}\neq K$. Thus, we have proved the following.
	
	\begin{theorem}\label{thm:hat}
		If every element of $2\co_K^+$ is the sum of squares, then $K$ is generated (as a field extension of $\Q$) by elements $\alpha\in\co_K$ with $\house{\alpha}<1+\sqrt{11}$.
	\end{theorem}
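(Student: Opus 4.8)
The plan is to argue by contradiction: suppose $E\colonequals E_X\neq K$ for the specific value $X=2+\sqrt 6$, but actually the cleanest route is to keep $X$ as a free parameter and derive a bound on it. First I would fix $X>0$ and let $E=E_X$ be the subfield of $K$ generated over $\Q$ by all $\alpha\in\co_K$ with $\house{\alpha}<X$; the point is that $E$ is a genuine field (it is generated by a set of integers, hence is a subring of $\co_K$ which is finite over $\Z$, hence an order in some subfield). Assuming $E\subsetneq K$, I would invoke the discreteness of the house: the set $\{\house{\gamma}:\gamma\in\co_K\setminus E,\ \gamma\neq 0\}$ is a set of real numbers bounded below by $1$ and, because $\co_K$ is a lattice in Minkowski space, it attains its infimum; call the minimizer $\beta_0$ and set $Y=\house{\beta_0}$. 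Note $Y\geq X$ by definition of $E$.

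Next I would translate $\beta_0$ into the totally positive cone without leaving the complement of $E$: choose $n\in\Z$ so that $\sigma_i(\beta_0)+n>0$ for every embedding $\sigma_i$, which is possible with $n\le \house{\beta_0}+1=Y+1$ (rounding up), and put $\beta=\beta_0+n$. Since $n\in\Z\subseteq E$ and $\beta_0\notin E$, we still have $\beta\in\co_K^+\setminus E$, and each $\sigma_i(\beta)=\sigma_i(\beta_0)+n\le Y+(Y+1)=2Y+1$, so $\house{\beta}<2Y+1$. Now apply the hypothesis: $2\beta=\sum_j x_j^2$ with $x_j\in\co_K$. Since $\beta\notin E$, at least one $x_j$, say $x_{j_0}$, does not lie in $E$ — otherwise $2\beta$ and hence $\beta$ would lie in $E$. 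For that index, $\house{x_{j_0}}\ge Y$ by minimality of $Y$, because $x_{j_0}\in\co_K\setminus E$. Comparing houses at the embedding where $x_{j_0}^2$ is largest, and using that all the $x_j^2$ are totally nonnegative, gives $4Y+2>\house{2\beta}\ge \house{x_{j_0}^2}=\house{x_{j_0}}^2\ge Y^2$.

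From $Y^2-4Y-2<0$ we get $Y<2+\sqrt 6$, contradicting $Y\ge X$ once $X\ge 2+\sqrt 6$. Therefore $E_{2+\sqrt6}=K$; equivalently, $K$ is generated over $\Q$ by integers of house strictly less than $2+\sqrt 6$, which is the assertion of Theorem~\ref{thm:hat}. (One small care: the inequality is strict because $\house{2\beta}<4Y+2$ is strict, coming from $\house{\beta}<2Y+1$; if instead one only had $\house{\beta}\le 2Y+1$ one would land on the nonstrict bound $Y\le 2+\sqrt 6$, which still suffices since we may enlarge $X$ slightly.) I do not expect any serious obstacle here: the only thing to be careful about is the first step — making sure $E$ really is a \emph{subfield} rather than merely a subring, which follows because it is an order in a number field and any order in a number field generates that field over $\Q$ — and the minimality/discreteness claim, which is immediate from the lattice structure of $\co_K$ in $\R^d$. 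The core of the argument is just the one-line house inequality $4Y+2>Y^2$, so the proof is genuinely short; the bulk of the paper's work lies instead in the subsequent finite (if enormous) computation over all fields with such small generators.
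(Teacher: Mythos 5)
Your proposal is correct and follows essentially the same route as the paper: define $E_X$, take a house-minimal $\beta_0\in\co_K\setminus E$, translate it into $\co_K^+$, and run the inequality $4Y+2>\house{2\beta}\ge\house{x_{j_0}}^2\ge Y^2$ to force $Y\le 2+\sqrt6$. The extra justifications you supply (attainment of the minimal house via finiteness of integers of bounded house, and the strict-versus-nonstrict bookkeeping) are correct and merely make explicit what the paper leaves implicit.
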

	
	For every $d\in\N$ and $B>0$, there are only finitely many algebraic integers $\alpha$ of degree $d$ such that $\house{\alpha}<B$. Therefore, there are finitely many $K$ satisfying the above theorem (in each degree), and there are sufficiently few of them that the candidate fields $K$ could be checked computationally. We carried out these calculations in Magma and PARI/GP for degrees $d\leq 5$.
	 Note that most of the number fields listed in Theorem \ref{Thm:o2} are isomorphic to the maximal real subfields of cyclotomics, that is, $\Q(\sqrt{2})\cong \Q(\zeta_{8}+\zeta_{8}^{-1}),\ \Q(\sqrt{3})\cong \Q(\zeta_{12}+\zeta_{12}^{-1}),\ \Q(\sqrt{5})\cong \Q(\zeta_{5}+\zeta_{5}^{-1}),\ \Q(\zeta_7+\zeta_7^{-1}),$ and $ \Q\left(\sqrt{(5+\sqrt{5})/2}\right)\cong \Q(\zeta_{20}+\zeta_{20}^{-1})$. This is perhaps not too surprising given the small house bound of the generator element.
	\begin{proof}[Proof of Theorem $\ref{Thm:o2}$]\
		
		\textbf{Implication ``$\Leftarrow$''.} This is straightforward to check using an upper bound \cite[Theorem 5]{KY3} on the norm of indecomposables. 
For if $2\alpha$ is the sum of squares for each indecomposable $\alpha$, then each element of $2\co_K^+$ is the sum of squares, as each totally positive integer is the sum of indecomposables. Since we know that each indecomposable $\alpha$ satisfies $\Nr_{K/\Q}(\alpha)\leq\Delta_K$ \cite[Theorem 5]{KY3} (where $\Delta_K$ is the discriminant of $K$), there are only finitely many classes of indecomposables up to multiplication by squares of units (which are easily found), so it suffices to check whether a representative of each of these classes is the sum of squares.
		
		However, this implication is known in all cases other than the number field generated by the root $\rho$ of $x^3-4x-2$: Scharlau \cite[Satz]{Sc1} proved that all our fields except $\Q(\zeta_7+\zeta_7^{-1})$ and $\Q(\rho)$ satisfy the following: \textit{If $\alpha\in\co_K^+$ is congruent with a square modulo $2\co_K$, then $\alpha$ is the sum of squares in $\co_K$.} As every element of $2\co_K^+$ is indeed a square modulo $2\co_K$, all these elements are sums of squares.      	 
		Furthermore, in \cite[proof of Lemma 6.1]{KY1} it was shown that twice any indecomposable element in $\Q(\zeta_7+\zeta_7^{-1})$ is the sum of squares. As each totally positive element is the sum of indecomposables, this implies the claim for $\Q(\zeta_7+\zeta_7^{-1})$.
		
		It remains to consider $\Q(\rho)$. Its discriminant equals $148$, and all its totally positive units are squares. Thus, using the norm bound mentioned above \cite[Theorem 5]{KY3}, we check that twice each indecomposable element is represented by the sum of squares. Specifically, we find all the integers of norms between $2$ and $148$ and directly confirm that for each totally positive integer $\alpha$, $2\alpha$ is represented by the sum of squares.	
		
		\medskip
			
		\textbf{Implication ``$\Rightarrow$''.} By Theorem \ref{thm:hat}, it suffices to show that in all number fields $K$ generated by the elements with the house bounded by $1+\sqrt{11}$, the condition that every element of $2\co_K^+$ is the sum of squares is not satisfied (unless it is one of the number fields listed in Theorem \ref{Thm:o2}). 
		
		The quadratic case was completely classified in \cite[Theorem~1.3]{KY2}. So it is left to check the cubic, quartic, and quintic number fields. In view of Theorem \ref{thm:hat}, we begin by finding all the algebraic integers $\alpha$ of degrees three, four, and five such that $\house{\alpha}<1+\sqrt{11}$. For this, we apply Robinson's algorithm (as described in Sections 3.1 and 3.2 of \cite{MS}) by computing all irreducible monic integer polynomials with only real roots that are bounded in absolute value by $1+\sqrt{11}$. Actually, we ran our computations up to the slightly larger bound $2+\sqrt 6$ and found $2\,885$ cubic, $190\,084$ quartic, and $983\,422$ quintic polynomials.
		
		For each such polynomial, we construct the corresponding number field.
		We test whether twice an element of the integral basis (when totally positive) is represented as the sum of squares (six, seven and eight squares, respectively, following from the bounds on the Pythagoras number for cubic, quartic and quintic number fields in \cite[Corollary~3.3]{KY1}). For the number fields that pass this test, we use the bounds of the norm of Theorem 5 in \cite{KY3} to find counterexamples. These simple procedures sieve out all but four polynomials, corresponding to the number fields from the statement of our theorem.
		
		However, this is not yet the end of the proof. In this way, we only found the fields $\Q(\alpha)$ where $\house{\alpha}<1+\sqrt {11}$. However, Theorem \ref{thm:hat} allows the field $K$ to be generated by \textit{several} elements of a small house (while every single generator of the field would be large), i.e., $K=\Q(\alpha_1,\alpha_2, \dots)$. This is not too much of an issue since we are dealing with degrees $d\leq 5$ since each $\Q(\alpha_i)$ must be a proper subfield of $K$, which is impossible when $d=1,2,3,5$. When $d=4$, the only cases that we need to consider are the biquadratic cases $\Q(\alpha_1,\alpha_2)$, where $\alpha_i$ are quadratic irrationals with $\house{\alpha_i}<1+\sqrt{11}$ for $i=1,2$. They generate 23 distinct quadratic fields, for which we need to consider their biquadratic composite fields.
Again, considering each element of the integral basis, we exclude all but 11 of them. One of these fields is $\Q(\sqrt{2},\sqrt{5})$; and for each of the remaining 10 fields, we finally find an element $\alpha$ such that $2\alpha$ is not the sum of squares.
	\end{proof}
 Let us remark that we had to pay particular attention to the field $\Q\left(\sqrt{(7+\sqrt{5})/2}\right)$ for which almost all elements of the ``small'' norm in $2\co_K^+$ are sums of squares.

We were able to carry out the calculations for degrees less than five in Magma \cite{Mag} (on a PC) and complete them in a few hours. The code is available on GitHub \url{https://github.com/pav10/2O}. In this case, the house bound is quite effective, and there are only a few fields that need to be checked. The exponential complexity of the computations makes it a demanding task to even find all viable polynomials without an effective algorithm for higher degrees. We are extremely grateful to James McKee for computing all monic irreducible degree five polynomials with roots bounded in absolute value by $2+\sqrt{6}$. There are $9\,834\,226$ such polynomials, with traces ranging from $-13$ to $13$.

It should be noted that an element with a house significantly less than the bound provided in Theorem \ref{thm:hat} generates each of the number fields described in Theorem \ref{Thm:o2}. In light of this, we looked further into the number fields of degrees up to $18$ from \cite[Appendix~A]{CDV} that correspond to small-span polynomials (that is, polynomials for which the difference between the greatest and smallest roots is less than $4$, excluding those whose roots are strictly between $0$ and $4$. See also \cite{Ro}). No additional (higher-degree) examples of number fields where $2\co_K^+$ is the sum of squares were found. We did not outright check the corresponding number fields for unrepresented integers, as the bounds for Pythagoras number of number fields of degrees $\ge 7$ are enormous (see, e.g., \cite{KO1, KO2} and Corollary~3.3 in \cite{KY1}). Instead, we found integers whose double is not represented by $d+3$ squares, where $d$ is the degree of the associated number field. After, we confirmed with norm bounds (from Lemma \ref{lemma:norm_bound}), that the number of squares considered was sufficient. Retrospectively, it was overkill, as for all examples, at most four squares would suffice. This perhaps provides weak evidence for the possible conjecture that the list in Theorem \ref{Thm:o2} is complete (for any degree $d$).

\section{$\Z$-forms and exceptional elements}\label{se:4}

Let us now discuss the two applications of Theorem \ref{Thm:o2} mentioned in the Introduction.

First, however, we obtain the following finiteness result for the existence of universal $\Z$-forms that is
 an improvement of  \cite[Corollary 9]{KY3}. 
\begin{theorem}
	For each positive integer $d$ there are only finitely many totally real number fields of degree $d$ with a universal $\Z$-form over $K$.
\end{theorem}
\begin{proof} Let $K$ be a totally real field of degree $d$ and let $Q$ be a universal $\Z$-form over $K$. By \cite[Proposition~3.1]{KY1}, we know that each totally positive integer in $\co_K$ is represented by a rational quadratic form of rank at most $d$. There exists an absolute bound, say $r$, for the minimal rank of rational quadratic form that represents all positive definite quadratic forms of ranks $\le$ $d$ (see, for example, \cite{Oh}). Therefore, there exists a universal $\Z$-form over $K$ of rank at most $r$. By \cite[Corollary 9]{KY3}, the result follows, i.e., there are only finitely many totally real number fields of degree $d$ with a universal quadratic form of rank $r$ over $K$.
\end{proof}

We now prove Theorem $\ref{cor:z-forms}$.

\begin{proof}[Proof of Theorem $\ref{cor:z-forms}$]

	If $Q$ is a $\Z$-form that is universal over $K$, where $[K:\Q]\le 5,$ then, as in the argument before, each totally positive integer in $K$ is represented by a $\Z$-form $Q'$ of rank $\le 5$. The Gram matrix of $2Q'$ is an integer matrix, and according to Mordell and Ko \cite {Ko}, 
	it is represented by the sum of squares of integer linear forms (note that this is no longer true for quadratic forms of rank 6 and higher). Therefore, all totally positive integers in $K$ that are divisible by $2$ are represented by the sum of squares. This restricts $K$ to the number fields listed in Theorem \ref{Thm:o2}. It remains to show that the fields not listed in Theorem \ref{cor:z-forms} do not admit a universal $\Z$-form.
	
	For that, we will use the fact that $2\alpha \in \co_K^+$ being represented by the sum of squares does not necessarily imply that $\alpha$ is represented by a $\Z$-form. Concretely, assume that there indeed is a universal $\Z$-form $Q$ of some rank $r$ over $K$ (that, in particular, represents $\alpha$),
	let $M_{Q}$ be its Gram matrix, 
	and let $\omega_1,\ldots, \omega_d$ be the integral basis of $\co_K$. We have $2M_{Q}\in \Z^{r\times r}$ and $(2M_{Q})_{ii}\in 2\Z$ for every $i$.
	
	Now, all solutions $v$ to the equation $v^tM_{Q}v=\alpha$ can be written as $Vw$ for $w^t=(\omega_1,\ldots,\omega_d)$ and some $V\in \Z^{r\times d}$. Denote $M_{Q'}=V^tM_QV$ so that $2\alpha=2w^tM_{Q'}w$, and observe that $2M_{Q'}\in \Z^{d\times d}$ and $(2M_{Q'})_{ii}\in 2\Z$ for every $i$.
	From the above-mentioned result of Mordell and Ko, there exists a rectangular integer matrix $U\in \Z^{s\times d}$ such that $2M_{Q'}=U^tU$; we can assume that $U$ has no rows containing all 0 (by omitting such rows if necessary). It follows that 
	\[2\alpha=w^tU^tUw \ \text{ and } \   (U^tU)_{ii}\in 2\Z \ \text { for all } \ i=1,\ldots,d.\] 
	This condition can be checked computationally: Observe that if we denote $w^tU^t=(x_1,\dots,x_s)$, then $2\alpha=x_1^2+\ldots+x_s^2$ and all $x_i\in\co_K$ and $x_i\neq 0$ (as $U$ did not contain empty rows).
	For a given element $\alpha$, there are only finitely many such decompositions (e.g., because $\Tr_{K/\Q}(x_i^2)\leq \Tr_{K/\Q}(2\alpha))$, and it is not hard to find all of them and check if the condition $(U^tU)_{ii}\in 2\Z$ is satisfied.

	For the numbers fields $\Q(\sqrt{2}),\ \Q(\sqrt{3}),\ \Q(\rho),\ \Q(\sqrt{2},\sqrt{5})$ and $\Q\left(\sqrt{(5+\sqrt{5})/2}\right)$, where $\rho$ is a root of $x^3-4x-2$, we found integers that do not have an appropriate solution. Specifically, such elements $\alpha\in\co_K^+$ are $2-\sqrt{2},\ 2+\sqrt{3},\ -2-3\rho+2\rho^2,\ 2-\sqrt{2}$ and $-2+\tau+2\tau^2-\tau^{3}$ (here $\tau=\sqrt{(5+\sqrt{5})/2}=\zeta_{20}+\zeta_{20}^{-1}$), respectively. 
\end{proof}

The second corollary of Theorem \ref{Thm:o2} relates to number fields that satisfy the local-global principle for representations of integers as sums of integral squares. 
The only condition for an integer $\alpha\in\co_K^+$ to be the sum of squares everywhere locally is that $\alpha$ is congruent to a square modulo $2$ (as quickly follows from \cite[Theorem~7.4]{Ri}; for an explicit reference, see \cite[Lemma~2.2]{KY2}).
The integer $\alpha\in\co_K^+$ is called 
an \textit{exceptional element} (or \textit{Ausnahme--Elemente} as defined by Scharlau in \cite{Sc1}) if it is congruent to a square modulo $2$, but it is not a sum of squared integers. Scharlau \cite{Sc1,Sc2} classified all totally real number fields without exceptional elements (i.e., satisfying the integral local-global principle for representations as sums of squares) of degrees $\leq 4$. 
In a number field without exceptional elements, all even integers are represented by the sum of squares, and so our Theorem \ref{Thm:o2} implies that there are no such quintic fields. This result was previously proved by Gerkmann \cite[Satz~2.3.2]{Ge}, but we nevertheless mention it here for completeness (as the thesis \cite{Ge} is not easily available).
\begin{corollary}[{\cite[Satz~2.3.2]{Ge}}]\label{th:exc}
	Let $K$ be a totally real number field of degree $d\le 5$. Then $K$ does not have any exceptional elements if and only if $K$ is isomorphic to 
	\begin{equation*}
		\Q,\ \Q(\sqrt{2}),\ \Q(\sqrt{3}),\ \Q(\sqrt{5}),\ \Q(\sqrt{2},\sqrt{5}),\text{ or }\ \Q\left(\sqrt{(5+\sqrt{5})/2}\right).
	\end{equation*}
\end{corollary}
	\section{The quartic case}\label{s:4}
	Let us now consider the number fields of degree four. We will use the Minkowski-reduced basis to construct totally positive integers of small trace and degree four over the rationals. For $\alpha \in K$, let us define $\|\alpha\|=|\sigma_1(\alpha)|+\cdots+|\sigma_d(\alpha)|$, where $|\cdot|$ on the right is the ordinary absolute value. 
	Note that $\|\alpha\|\asymp \house{\alpha}$ (with the implied constant depending on $d$).

	\begin{prop}\label{pr:quartic traces} Fix a squarefree integer $D>1$ and 
		assume that $K\ni\sqrt D$ is a totally real quartic field with discriminant $\Delta$.
		
		a) There is $\alpha\in\co_K^+\setminus\Q(\sqrt D)$ with
		$$\Tr_{K/\Q}(\alpha)\asymp \Delta^{1/{4}}\ (\text{as }\Delta\rightarrow\infty).$$
		
		b) For every $\beta\in\co_K\setminus\Q(\sqrt D)$, we have 
		$$\Tr_{K/\Q}(\beta^2)\gg\Delta^{1/2}\ (\text{as }\Delta\rightarrow\infty).$$
	\end{prop}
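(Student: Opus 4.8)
The plan is to reduce everything to the fixed quadratic subfield $F=\Q(\sqrt D)$, over which $K$ is a quadratic extension. Write $\Delta_F$ for the (constant) discriminant of $F$; the tower formula gives $\Delta=\Delta_F^{2}\,\Nr_{F/\Q}(\mathfrak d_{K/F})$, so $\Delta\asymp\Nr_{F/\Q}(\mathfrak d_{K/F})$. First I would fix a convenient relative generator: choose $\theta\in\co_F$ with $K=F(\sqrt\theta)$ for which $(\theta)=\mathfrak a\mathfrak c^{2}$ with $\mathfrak a$ squarefree and $\Nr_{F/\Q}(\mathfrak c)=O(1)$ --- such $\theta$ exists since, starting from any integral generator, one strips square ideal factors and absorbs the finitely many non-principal obstructions into $\mathfrak c$ using the finiteness of the class group of $F$. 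Being a relative generator of a totally real field, $\theta$ is automatically totally positive ($\sigma_i(\theta)>0$ for the two real embeddings $\sigma_1,\sigma_2$ of $F$). From $\mathfrak a\mid\mathfrak d_{K/F}\mid(4\theta)$ and $\Nr_{F/\Q}(\mathfrak c)=O(1)$ one gets $\Nr_{F/\Q}(\theta)\asymp\Nr_{F/\Q}(\mathfrak d_{K/F})\asymp\Delta$, and comparing discriminants over $\Z$ gives $[\co_K:\co_F[\sqrt\theta]]=O(1)$. Finally, since $\sigma_1(\varepsilon)\sigma_2(\varepsilon)=\pm1$ for a fundamental unit $\varepsilon$ of $\co_F$, replacing $\theta$ by $\varepsilon^{2k}\theta$ (still a totally positive relative generator) for suitable $k\in\Z$ balances the conjugates, so that in addition $\sigma_1(\theta)\asymp\sigma_2(\theta)\asymp\sqrt\Delta$ and $\sqrt{\sigma_i(\theta)}\asymp\Delta^{1/4}$.

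The only identity needed is that, grouping the four real embeddings of $K$ into the pairs above $\sigma_1,\sigma_2$ (which send $\sqrt\theta$ to $\pm\sqrt{\sigma_i(\theta)}$), one has $\Tr_{K/\Q}(b+c\sqrt\theta)=2\Tr_{F/\Q}(b)$ for all $b,c\in F$, the $\sqrt\theta$-terms cancelling within each pair. For part a) I would use that $\co_F\hookrightarrow\R^{2}$ is a lattice of covolume $\sqrt{\Delta_F}=O(1)$, hence of bounded covering radius, to pick $a\in\co_F$ with $\sqrt{\sigma_i(\theta)}<\sigma_i(a)<\sqrt{\sigma_i(\theta)}+C$ for $i=1,2$, where $C$ depends only on $D$. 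Then $\alpha:=a+\sqrt\theta\in\co_K$ is totally positive (its conjugates $\sigma_i(a)\pm\sqrt{\sigma_i(\theta)}$ are positive) and $\alpha\notin F$ since $\sqrt\theta\notin F$; by the identity,
\[\Tr_{K/\Q}(\alpha)=2\bigl(\sigma_1(a)+\sigma_2(a)\bigr)\asymp\sqrt{\sigma_1(\theta)}+\sqrt{\sigma_2(\theta)}\asymp\Delta^{1/4}.\]

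For part b), take any $\beta\in\co_K\setminus\Q(\sqrt D)$ and write $\beta=b+c\sqrt\theta$ with $b,c\in F$, $c\neq0$. Since $[\co_K:\co_F[\sqrt\theta]]=O(1)$, there is a bounded $m\in\Z_{>0}$ with $mc\in\co_F$, so $\lvert\Nr_{F/\Q}(c)\rvert\ge m^{-2}\gg1$. As $\beta^{2}=(b^{2}+c^{2}\theta)+2bc\sqrt\theta$, the identity together with $\Tr_{F/\Q}(b^{2})\ge0$ and the AM--GM inequality applied to $\Tr_{F/\Q}(c^{2}\theta)=\sigma_1(c)^{2}\sigma_1(\theta)+\sigma_2(c)^{2}\sigma_2(\theta)$ give
\[\Tr_{K/\Q}(\beta^{2})=2\Tr_{F/\Q}(b^{2})+2\Tr_{F/\Q}(c^{2}\theta)\ \ge\ 4\,\lvert\Nr_{F/\Q}(c)\rvert\sqrt{\Nr_{F/\Q}(\theta)}\ \gg\ \sqrt\Delta.\]
For this direction only $\Nr_{F/\Q}(\theta)\gg\Delta$ is used, which already follows from $\mathfrak d_{K/F}\mid(4\theta)$, so no balancing of $\theta$ is needed here.

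The hard part is the first paragraph: producing a relative generator $\theta$ whose norm is comparable to $\Delta$ (rather than just $\gg\Delta$) and whose two conjugates are balanced. This is precisely where the finiteness of the class group and of the unit group of the fixed field $F$ enters, and it is why the implied constants are allowed to depend on $D$. Once such a $\theta$ is in hand, part a) is a one-line geometry-of-numbers argument and part b) is an AM--GM estimate.
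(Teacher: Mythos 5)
Your proof is correct, but it follows a genuinely different route from the paper. The paper works with a Minkowski-reduced $\Z$-basis $1,\alpha_1,\alpha_2,\alpha_3$ of $\co_K$: it shows $\Z+\Z\alpha_1=\co_{\Q(\sqrt D)}$, deduces $\|\alpha_2\|\asymp\|\alpha_3\|\asymp\Delta^{1/4}$ from the product formula $\|\alpha_1\|\|\alpha_2\|\|\alpha_3\|\asymp\Delta^{1/2}$ together with the upper bound $\|\alpha_3\|\ll\Delta^{1/4}$ imported from Bhargava et al., takes $\alpha=k+\alpha_2$ for a), and for b) uses the minimality property of the reduced basis to get $\|\beta\|\gg\Delta^{1/4}$, hence one conjugate of size $\gg\Delta^{1/4}$ and $\Tr_{K/\Q}(\beta^2)\gg\Delta^{1/2}$. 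You instead exploit the relative quadratic structure $K=F(\sqrt\theta)$ directly, normalizing $\theta$ via the finiteness of the class group and unit group of the fixed field $F$ so that $\Nr_{F/\Q}(\theta)\asymp\Delta$, the two conjugates of $\theta$ are balanced, and $[\co_K:\co_F[\sqrt\theta]]=O(1)$; part a) is then a covering-radius argument in the fixed lattice $\co_F$ and part b) an AM--GM estimate on $\Tr_{F/\Q}(c^2\theta)\ge 2\abs{\Nr_{F/\Q}(c)}\sqrt{\Nr_{F/\Q}(\theta)}$. The trade-offs: your argument avoids the nontrivial external input on reduced bases and is more explicit about the element produced in a), at the cost of the bookkeeping in your first paragraph (squarefree decomposition of $(\theta)$, the divisibilities $\mathfrak a\mid\mathfrak d_{K/F}\mid(4\theta)$, the index computation); the paper's argument is shorter once the properties of the reduced basis are accepted, and its part b) needs only one large conjugate rather than a norm lower bound for $c$. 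Two small points of hygiene: the covering radius of $\sigma(\co_F)$ is bounded because the lattice itself is fixed by $D$, not merely because its covolume is (a skew lattice of fixed covolume can have large covering radius), and it is worth saying explicitly that the unit twist $\theta\mapsto\varepsilon^{2k}\theta$ changes neither the ideal $(\theta)$ nor total positivity nor the index bound --- but both points are consistent with what you wrote, and neither is a gap.
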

	Here, all the implied constants depend only on $D$ but not on $F$.
	\begin{proof}
		Order the real embeddings of $F$ so that 
		$\sigma_i(\sqrt D)=\sqrt D$ for $i=1,2$ and  $\sigma_i(\sqrt D)=-\sqrt D$ for $i=3,4$.

		We will use the properties of \textit{Minkowski reduced basis} \cite[Lecture X, \S 6]{Si5},
		\cite[Section 3]{BS+}. Specifically, in our situation, we will show that:
		
		\begin{claim}\renewcommand{\qedsymbol}{}
			
			There is a basis $\alpha_0=1,\alpha_1,\alpha_2,\alpha_3$ of $\co_K$ such that
			\begin{enumerate}[label={\rm (\arabic*)}]
				\item $\|1\|\leq \|\alpha_1\|\leq \|\alpha_2\|\leq \|\alpha_3\|$,
				\item $\|\alpha_i\|\ll \|\alpha\|$ for all $i=0,1,2$ and $\alpha\not\in\Z\cdot 1+\Z\cdot\alpha_1+\dots+\Z\cdot\alpha_{i-1}$,
				\item $\|\alpha_1\|\cdot \|\alpha_2\|\cdot \|\alpha_3\|\asymp\Delta^{1/2}$,
				\item $\|\alpha_2\|\asymp \|\alpha_3\|\asymp \Delta^{1/4}$,
				\item $\Z\cdot 1+\Z\cdot\alpha_1=\co_{\Q(\sqrt D)}$.
			\end{enumerate}
		\end{claim}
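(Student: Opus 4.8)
The plan is to extract the basis $\alpha_0 = 1, \alpha_1, \alpha_2, \alpha_3$ from the general theory of Minkowski-reduced bases of lattices, applied to the image $\sigma(\co_K) \subset \R^4$ under the Minkowski embedding, and then translate the standard lattice-theoretic facts into the asserted estimates, using that $K$ contains the fixed quadratic subfield $\Q(\sqrt D)$.

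First I would recall the setup: the covolume of $\sigma(\co_K)$ is $\asymp \Delta^{1/2}$ (with an absolute constant, since $\sigma$ is the standard Minkowski embedding and $K$ is totally real), so a Minkowski-reduced basis $v_0, v_1, v_2, v_3$ of this lattice, ordered by increasing length $\|v_0\| \le \|v_1\| \le \|v_2\| \le \|v_3\|$ (where on $\R^4$ I take the $\ell^1$-norm, which matches $\|\cdot\|$ on $K$ up to constants via $\|\alpha\| \asymp \house{\alpha}$), satisfies the two hallmark properties of Minkowski reduction: (i) each $\|v_i\|$ is, up to a dimension-dependent constant, the minimum of $\|v\|$ over lattice vectors $v$ not in the span of $v_0, \ldots, v_{i-1}$ — this gives item (2) directly once we know $v_0 = \pm 1$; and (ii) the product $\|v_0\| \|v_1\| \|v_2\| \|v_3\| \asymp \operatorname{covol} \asymp \Delta^{1/2}$ (Minkowski's second theorem together with the reduction bound), which gives item (3) after normalizing $v_0 = 1$. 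To arrange $v_0 = 1$: the shortest nonzero vector of $\sigma(\co_K)$ has $\ell^1$-norm $\asymp \house{\cdot}$ at least $\ge d = 4$ (norm $\ge 1$ at each embedding for a nonzero integer, hence $\ell^1$-norm $\ge 4$), and $1$ attains $\|1\| = 4$, so $1$ is a shortest vector; I would use the standard fact that $1$ can be taken as the first basis vector of a Minkowski-reduced basis (or simply start the greedy/successive-minima construction with $\alpha_0 = 1$). Setting $\alpha_i = v_i$ for $i \ge 1$ then gives items (1) and (2) and (3). For item (5), I note $\co_{\Q(\sqrt D)}$ is a rank-$2$ primitive sublattice of $\co_K$ containing $1$; by the minimality property (2) applied to $i = 2$, $\alpha_1$ is (up to constants) a shortest vector of $\co_K$ outside $\Z$, and I would argue — here using that $D$ is fixed — that $\co_{\Q(\sqrt D)}$ contains a vector of bounded length (namely $\sqrt D$ or $(1+\sqrt D)/2$) while any vector of $\co_K \setminus \Q(\sqrt D)$ has all four embeddings "spread out," forcing $\alpha_1 \in \Q(\sqrt D)$, and then $\Z \cdot 1 + \Z \cdot \alpha_1 = \co_{\Q(\sqrt D)}$ by a discriminant/index comparison (again $[\co_{\Q(\sqrt D)} : \Z + \Z\alpha_1]$ is bounded, and one can choose $\alpha_1$ to be an actual generator of $\co_{\Q(\sqrt D)}$ among the shortest vectors outside $\Z$).

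Next, item (4): since $\alpha_1 \in \Q(\sqrt D)$ with $\|\alpha_1\| \asymp 1$ (bounded above and below in terms of $D$ only), item (3) forces $\|\alpha_2\| \|\alpha_3\| \asymp \Delta^{1/2}$, and combined with $\|\alpha_2\| \le \|\alpha_3\|$ this gives $\|\alpha_3\| \gg \Delta^{1/4}$ and $\|\alpha_2\| \ll \Delta^{1/4}$. For the matching lower bound on $\|\alpha_2\|$ and upper bound on $\|\alpha_3\|$ I would argue that $\|\alpha_2\| \gg \Delta^{1/4}$ because any two vectors spanning (together with $1$) a rank-$3$ sublattice that is "thin" in one direction would be contained, up to bounded index, in a single quadratic-plus-one configuration, contradicting that $\alpha_2 \notin \Q(\sqrt D)$ and that the orthogonal complement direction carries covolume $\asymp \Delta^{1/2}/\|\alpha_1\| \asymp \Delta^{1/2}$; concretely, the sublattice $\co_K / \co_{\Q(\sqrt D)}$ (or its image under projection away from the $\Q(\sqrt D)$-plane) has covolume $\asymp \Delta^{1/2}$ and rank $2$, so both of its successive minima are $\asymp \Delta^{1/4}$ by Minkowski — and $\alpha_2, \alpha_3$ project to a reduced basis of it.

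The main obstacle I anticipate is item (4), and specifically the lower bound $\|\alpha_2\| \gg \Delta^{1/4}$ — i.e., ruling out a "lopsided" rank-$3$ lattice in which one of $\alpha_2, \alpha_3$ stays bounded while the other absorbs all the growth. The clean way to handle this is to pass to the projection $\co_K \to \co_K / \co_{\Q(\sqrt D)}$ and invoke Minkowski's second theorem there: the quotient lattice has rank $2$ and covolume comparable to $\operatorname{covol}(\co_K) / \operatorname{covol}(\co_{\Q(\sqrt D)}) \asymp \Delta^{1/2}$, so its two successive minima multiply to $\asymp \Delta^{1/2}$ and each is individually $\asymp \Delta^{1/4}$ — provided one also bounds the ratio of the two minima, which for a quadratic-over-quadratic extension with $D$ fixed should follow from a geometry-of-numbers argument on the two "new" embeddings (those where $\sqrt D \mapsto -\sqrt D$ relative to the base). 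I would need to make sure the projection of a Minkowski-reduced basis of $\co_K$ is, after discarding the $\Q(\sqrt D)$-part, comparable to a reduced basis of the quotient — this is standard but deserves a sentence of care. Everything else is a routine translation of classical geometry-of-numbers facts, so the write-up should be short.
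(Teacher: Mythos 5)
Your overall strategy (Minkowski reduction of $\sigma(\co_K)$, with $1$ as the first basis vector) matches the paper's, and items (1)--(3) are handled the same way. The genuine gap is in item (4), which you correctly identify as the crux but do not actually close. Minkowski's second theorem applied to the rank-$2$ quotient lattice $\co_K/\co_{\Q(\sqrt D)}$ only gives that the \emph{product} of its two successive minima is $\asymp\Delta^{1/2}$; a rank-$2$ lattice of that covolume can be arbitrarily lopsided, so this does not yield $\mu_1\asymp\mu_2\asymp\Delta^{1/4}$. You flag that one must ``also bound the ratio of the two minima'' and defer this to an unspecified ``geometry-of-numbers argument on the two new embeddings'' --- but that deferred step \emph{is} the theorem. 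The concrete inequality you are missing is: for every $\beta\in\co_K\setminus\Q(\sqrt D)$ one has $\|\beta\|\gg_D\Delta^{1/4}$, because $\Delta_K$ divides $\operatorname{disc}\bigl(\co_{\Q(\sqrt D)}+\co_{\Q(\sqrt D)}\beta\bigr)\ll_D \bigl|(\sigma_1\beta-\sigma_2\beta)(\sigma_3\beta-\sigma_4\beta)\bigr|^2\ll\|\beta\|^4$. With that in hand your quotient-lattice route does work ($\mu_1\gg\Delta^{1/4}$ plus $\mu_1\mu_2\asymp\Delta^{1/2}$ forces both minima $\asymp\Delta^{1/4}$), but it must be stated and proved. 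The paper takes a different shortcut here: it imports the upper bound $\|\alpha_3\|\ll\Delta^{1/4}$ from Theorem~3.1 of \cite{BS+} --- a statement about \emph{rings} (it uses the multiplicative structure of $\co_K$, not just the lattice structure) --- and then item (4) falls out of $\|\alpha_1\|\ll_D 1$ and item (3) by pure arithmetic.

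A secondary soft spot: your argument for (5) ends with ``one can choose $\alpha_1$ to be an actual generator of $\co_{\Q(\sqrt D)}$ among the shortest vectors,'' but Minkowski reduction does not let you choose $\alpha_1$ freely, and a bounded index $[\co_{\Q(\sqrt D)}:\Z+\Z\alpha_1]$ is not the same as index $1$. The paper avoids this by proving (4) \emph{first} and then noting that the generator $\omega$ of $\co_{\Q(\sqrt D)}$ has $\|\omega\|\asymp_D 1\ll\Delta^{1/4}\asymp\|\alpha_2\|$, so property (2) forces $\omega\in\Z+\Z\alpha_1$ once $\Delta$ is large; combined with $\alpha_1\in\co_K\cap\Q(\sqrt D)$ this gives equality. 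You should reorder your argument accordingly (or supply the discriminant inequality above, after which the same reordering is available to you).
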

		
		\begin{proof}
			(1), (2), and (3) are basic properties of the Minkowski-reduced basis. 
			
			To establish (4), first note that \cite[Theorem~3.1]{BS+} gives $\|\alpha_3\|\ll \Delta^{1/4}$ (although \cite{BS+} work with a different length function, it is equivalent to ours).  
			We have $\sqrt D\in \co_K\setminus \Z\cdot 1$, and so by (2) we have $\|\alpha_1\|\ll \|\sqrt D\|\asymp 1$ (here the implied constant of course depends on $D$).  
			Together with $\|\alpha_2\|\leq \|\alpha_3\|\ll \Delta^{1/4}$ and (3), this implies (4).
			
			Finally, to prove (5), let $\co_{\Q(\sqrt D)}=\Z[\omega]$ with $\omega=\sqrt D$ or $(1+\sqrt D)/2$ depending on $D\pmod 4$. By (2) and (4) we have $\omega\in \Z\cdot 1+\Z\cdot\alpha_1$, and so $\co_{\Q(\sqrt D)}=\Z[\omega]\subset \Z\cdot 1+\Z\cdot\alpha_1$, but also $\alpha_1\in \Q\cdot 1+\Q\cdot\omega=\Q(\sqrt D)$. Thus $\alpha_1\in\co_K\cap\Q(\sqrt D)=\co_{\Q(\sqrt D)}$, finishing the proof of (5).
		\end{proof}
		
		Let us further work with such a {Minkowski-reduced basis} $\alpha_0=1,\alpha_1,\alpha_2,\alpha_3$.
		
		Let $k\in\Z$ be the smallest integer such that $k>\sigma_i(\alpha_2)$ for all $i$.
		Then we have $k\asymp\house{\alpha_2}$, $k+\alpha_2\in\co_K^+$, and $\Tr_{K/\Q}(k+\alpha_2)=4k+\Tr_{K/\Q}(\alpha_2)\asymp\house{\alpha_2}\asymp\|\alpha_2\|\asymp \Delta^{1/4}$, establishing a) for $\alpha=k+\alpha_2$.
		
		\medskip
		
		For b), let $\beta\in\co_K\setminus\Q(\sqrt D)$. As $\beta\not\in\Z\cdot 1+\Z\cdot\alpha_1=\co_{\Q(\sqrt D)}$, by (2) we have $\Delta^{1/4}\asymp\|\alpha_2\|\ll\|\beta\|$. Therefore, there is $i$ such that $|\sigma_i(\beta)|\gg\Delta^{1/4}$, and so $\Tr_{K/\Q}(\beta^2)\gg\Delta^{1/2}$. 
	\end{proof}  

	\begin{corollary}\label{cor:quart} 
		Fix a squarefree integer $D>1$ and 
		assume that $K\ni\sqrt D$ is a totally real quartic field with discriminant $\Delta$. 
		Let $\ve>0$.
		For each $m\in\Z$, $0<m\ll\Delta^{1/4-\ve}$ (where the implied constant depends on $D,\ve$), there is an element of $m\co_K^+$ that is not the sum of squares of elements of $\co_K$.
	\end{corollary}
	
	\begin{proof}
		Consider the element $\alpha$ from Proposition \ref{pr:quartic traces}a) and assume that $m\alpha=\sum\gamma_j^2$ is the sum of squares.
		We have $\Tr_{K/\Q} (m\alpha)\ll \Delta^{1/2-\ve}$, and so by Proposition \ref{pr:quartic traces}b) each $\gamma_j\in\Q(\sqrt D)$. But then $\alpha\in\Q(\sqrt D)$, a contradiction.	
	\end{proof}
	
\section{$2^n$-th roots of unity}\label{s:6} Throughout this section, we take 
$q=2^n$ with $n\ge 3$. Recall that we already introduced the notation for the real cyclotomic field $K_{2^n}=\Q(\omega)$ and
for its ring of integers $\co_{2^n}=\Z[\omega]$, where $\omega=\zeta_{{2^n}}+\zeta_{{2^n}}^{-1}$. We further denote $\omega_0=1$ and $\omega_j=\zeta_{q}^j+\zeta_{q}^{-j}$ for $j\neq 0$ (see Section \ref{s:2}). 

We will study the following two integers:
\[\alpha_n=\omega+2\; 
\mbox{ and }\;\gamma_n=\omega+\omega_{d-1}+3,\]
where $\gamma_n$ is defined for $n>3$ (as seen in \cite{Sc2}) and $d=2^{n-2}$. Note that we have \[\omega_d=\zeta^{2^{n-2}}_{2^n}+\zeta^{-2^{n-2}}_{2^n}=i-i=0,\] since $(\zeta^{2^{n-2}}_{2^n})^2=(\zeta^{-2^{n-2}}_{2^n})^2=-1$. We will eventually demonstrate that $\alpha_n\gamma_n$ is an indecomposable integer, for which we start by recalling the following result.

\begin{prop}\cite[p.~744]{We}\label{prop:2}
	All totally positive units in $\co_{2^n}$ are squares. Furthermore, 
 for all $\alpha,\alpha' \in \co_{2^n}$, if $\Nr_{K_{2^n}/\Q}(\alpha)=\Nr_{K_{2^n}/\Q}(\alpha')=2$, then $\alpha= \varepsilon \alpha'$ for some unit $ \varepsilon\in \co_{2^n}$. 
\end{prop}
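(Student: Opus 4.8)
The plan is to establish the two assertions of Proposition \ref{prop:2} by relating the arithmetic of $\co_{2^n}$ over the prime $2$ to the fact that $2$ is totally ramified in $K_{2^n}$. First I would recall the standard description of the prime $2$ in the cyclotomic tower: in $\Q(\zeta_{2^n})$ the prime $2$ is totally ramified, and $1-\zeta_{2^n}$ generates the unique prime above it. Descending to the maximal real subfield $K_{2^n}$, the element $\lambda_n\eqdef 2-\omega=2-\zeta_{2^n}-\zeta_{2^n}^{-1}=(1-\zeta_{2^n})(1-\zeta_{2^n}^{-1})$ is a totally positive generator of the unique prime $\mathfrak{p}$ above $2$, and since $[K_{2^n}:\Q]=2^{n-2}=d$ and $2$ is totally ramified, we have $\mathfrak{p}^{d}=(2)$ and $\Nr_{K_{2^n}/\Q}(\lambda_n)=2$. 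Thus an element of norm $2$ exists; the content of the second assertion is uniqueness up to units, which is immediate once one knows that $\mathfrak{p}$ is principal: if $\Nr(\alpha)=\Nr(\alpha')=2$ then $(\alpha)=(\alpha')=\mathfrak{p}$, hence $\alpha=\varepsilon\alpha'$ for a unit $\varepsilon$. (The class-number-free way to see $\mathfrak{p}$ is principal is simply to exhibit the generator $\lambda_n$ above.)

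For the first assertion, that every totally positive unit of $\co_{2^n}$ is a square, I would cite the structure of the unit group of real cyclotomic fields of prime-power conductor: it is generated by $-1$ together with the cyclotomic units, and for conductor $2^n$ one can take as a system of fundamental units the elements $\xi_j=\dfrac{\zeta_{2^n}^{j}-\zeta_{2^n}^{-j}}{\zeta_{2^n}-\zeta_{2^n}^{-1}}=\dfrac{\sin(2\pi j/2^n)}{\sin(2\pi/2^n)}$ for suitable odd $j$, of which there are $d-1$, matching the unit rank. A totally positive unit $\varepsilon$ is then, up to sign, a product $\prod \xi_j^{a_j}$ with $a_j\in\Z$; applying the real embeddings and taking signs shows that total positivity forces all the exponents $a_j$ to be even (each $\xi_j$ and its conjugates realize all sign patterns that are not "already a square"), so $\varepsilon$ is a square. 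Concretely, I would instead quote the cleaner statement of Weber (this is exactly \cite[p.~744]{We}, which is why the proposition is attributed there), namely that for $q$ a power of $2$ the real cyclotomic field $K_q$ has odd class number and its totally positive units are all squares; both facts follow from the ambiguous class number formula applied to $K_q/\Q$ together with the ramification data above $2$ (only $2$ ramifies, and it is totally ramified, so the genus field is trivial).

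The main obstacle is the "totally positive units are squares" part, since "norm $2$ unique up to units" is essentially bookkeeping with the unique ramified prime. I would not reprove Weber's result; the honest move here is to invoke \cite[p.~744]{We} directly, as the paper already does in the statement, and only spell out the norm-$2$ claim: namely that $\lambda_n=2-\omega$ has norm $2$ (a one-line computation, $\prod_i(2-\sigma_i(\omega))$ equals the resultant of $x^2-2$ with the minimal polynomial of $\omega$, which is $2$), that it generates the unique prime above $2$ because $2$ is totally ramified in the real cyclotomic field of $2$-power conductor, and hence any two elements of norm $2$ generate the same (principal) ideal and so differ by a unit. If one wants to stay self-contained, the ramification of $2$ in $K_{2^n}$ follows from that in $\Q(\zeta_{2^n})$ by $(1-\zeta_{2^n})(1-\zeta_{2^n}^{-1})=2-\omega$ and the fact that $(1-\zeta_{2^n})$ is the ramified prime of the full cyclotomic field with $e=\varphi(2^n)=2d$, so its norm down to $K_{2^n}$ contributes ramification index $d$ there.
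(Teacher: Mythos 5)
The paper gives no proof of this proposition at all---it is stated purely as a citation to Weber---so your proposal, which likewise defers to \cite[p.~744]{We} for the assertion that totally positive units are squares while supplying the routine argument for the norm-$2$ uniqueness (total ramification of $2$, principality of the unique prime $\mathfrak{p}=(2-\omega)$ above it, hence any element of norm $2$ generates $\mathfrak{p}$ and two such elements differ by a unit), is consistent with the paper's approach and correct. The only caveat is that your parenthetical sketch via cyclotomic units glosses over both the surjectivity of the signature map on them and the fact that their index in the full unit group is the class number $h^+$ (whose oddness is itself a theorem of Weber), but since you explicitly fall back on citing Weber for that half, nothing is lost.
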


Next we need several results on the properties fo the elements $\alpha_n$ and $\gamma_n$. Some of them are due to Scharlau \cite[part 5A]{Sc2}; we include also their proofs here for completeness (as the thesis \cite{Sc2} is not easily available).

\begin{prop}\label{prop:alpha2n}
	We have:
	\begin{enumerate}[label=(\roman*)]
		\item $\alpha_n\succ 0,$
		\item $\Tr_{K_{2^n}/\Q}(\alpha_n)=2d,$ 
		\item $\Nr_{K_{2^n}/\Q}(\alpha_n)=2$.		
	\end{enumerate}
\end{prop}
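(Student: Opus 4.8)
The plan is to pass to the full cyclotomic field $L\eqdef\Q(\zeta_{2^n})$. Since $K_{2^n}$ is the fixed field of complex conjugation $c\colon\zeta_{2^n}\mapsto\zeta_{2^n}^{-1}$, the extension $L/K_{2^n}$ is quadratic with $\operatorname{Gal}(L/K_{2^n})=\{\mathrm{id},c\}$, and the element $1+\zeta_{2^n}\in\co_L$ satisfies
\[\Nr_{L/K_{2^n}}(1+\zeta_{2^n})=(1+\zeta_{2^n})(1+\zeta_{2^n}^{-1})=2+\omega=\alpha_n,\qquad \Tr_{L/K_{2^n}}(\zeta_{2^n})=\zeta_{2^n}+\zeta_{2^n}^{-1}=\omega.\]
Everything should then follow from transitivity of norm and trace in the tower $\Q\subseteq K_{2^n}\subseteq L$ together with the explicit shape of the cyclotomic polynomial $\Phi_{2^n}(x)=x^{2^{n-1}}+1$.

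Concretely: for (i), any real embedding $\sigma$ of $K_{2^n}$ extends to an embedding of $L$ into $\CC$ sending $\zeta_{2^n}$ to a primitive $2^n$-th root of unity $\eta$, so $\sigma(\alpha_n)=2+\eta+\overline\eta=\abs{1+\eta}^2\ge 0$, with equality only if $\eta=-1$, which is impossible for $n\ge 3$; hence $\alpha_n\succ 0$. (Alternatively, $\alpha_n=(\zeta_{2^{n+1}}+\zeta_{2^{n+1}}^{-1})^2$ is a square in $K_{2^{n+1}}$.) For (ii), transitivity of the trace gives $\Tr_{K_{2^n}/\Q}(\omega)=\Tr_{L/\Q}(\zeta_{2^n})$, which is minus the coefficient of $x^{2^{n-1}-1}$ in $\Phi_{2^n}$ and hence $0$; since $\Tr_{K_{2^n}/\Q}(2)=2d$ and $d=2^{n-2}$, we get $\Tr_{K_{2^n}/\Q}(\alpha_n)=0+2d=2d$. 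For (iii), transitivity of the norm gives $\Nr_{K_{2^n}/\Q}(\alpha_n)=\Nr_{L/\Q}(1+\zeta_{2^n})$; writing $1+\zeta_{2^n}=1-(-\zeta_{2^n})$ and using that $\eta\mapsto-\eta$ permutes the primitive $2^n$-th roots of unity (again because $-1$ is not one of them), this equals $\prod_\eta(1-\eta)=\Phi_{2^n}(1)=1^{2^{n-1}}+1=2$, which is positive in accordance with $\alpha_n\succ 0$.

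I expect no genuine obstacle here; this is a routine norm/trace-in-towers computation. The only points needing care are the correct identification of $\operatorname{Gal}(L/K_{2^n})$ as generated by complex conjugation — this is precisely what makes $\Nr_{L/K_{2^n}}(1+\zeta_{2^n})$ equal to $\alpha_n$ — and the consistent use of the elementary parity facts that $-1$ is not a primitive $2^n$-th root of unity and that $2^{n-1}$ is even for $n\ge 3$, which are what force $\Phi_{2^n}(1)=2$ and annihilate the degree-$(2^{n-1}-1)$ term of $\Phi_{2^n}$.
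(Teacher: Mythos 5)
Your proposal is correct and follows essentially the same route as the paper: positivity from $\sigma(\alpha_n)=2+\eta+\overline\eta>0$ (the paper phrases this as $\house{\omega}<2$), the trace via $\Tr_{\Q(\zeta_{2^n})/\Q}(\zeta_{2^n})=0$ read off from $\Phi_{2^n}(x)=x^{2^{n-1}}+1$, and the norm via $\Nr_{\Q(\zeta_{2^n})/K_{2^n}}(1+\zeta_{2^n})=2+\omega$ combined with evaluating the cyclotomic polynomial (the paper substitutes $-1$ directly, you substitute $1$ after the sign flip $\eta\mapsto-\eta$; both give $2$). No gaps.
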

\begin{proof}
	$(i)$ follows from the fact that $\house{\omega}<2$. 
	
	$(ii)$ The minimal polynomial of $\zeta_{2^n}$ is $x^{2^{n-1}}+1$. Hence, it is easy to conclude that for $j=1,\dots, d-1$ we have $\Tr_{\Q(\zeta_{2^n})/\Q}(\zeta_{2^n}^j)=0$ (say, using Newton's identities). Given that $\omega=\zeta_{2^n}+\zeta_{2^n}^{-1}\in K_{2^n}$, we have \[0=\Tr_{\Q(\zeta_{2^n})/\Q}(\omega_j)=[\Q(\zeta_{2^n}):K_{2^n}]\Tr_{K_{2^n}/\Q}(\omega_j)=2\Tr_{K_{2^n}/\Q}(\omega_j).\]
	This immediately implies the value of the trace of $\alpha_n$. 
	
	$(iii)$
	By substituting $-1$ in the minimal polynomial of $\zeta_{2^n}$, we get that the norm of $1+\zeta_{2^n}$ in $\Q(\zeta_{2^n})$ is $2$. As the relative norm $\Nr_{\Q(\zeta_{2^n})/K_{2^n}}(1+ \zeta_{2^n})=2+ \omega$, $(iii)$ follows. 
\end{proof}

\begin{lemma}\label{lemma:sqr} Let $\beta=\sum_{i=0}^{d-1} a_i\omega_i, a_i\in\Z$. Then:
	\begin{enumerate}[label=(\roman*)]
		\item 
		$\Tr_{K_q/\Q}(\beta^2)=
		d(a_0^2+2\sum_{i= 1}^{d-1}a_i^2).$
		\item 	
		If $\beta=\sum_{j=1}^k \beta^2_j$ for some $\beta_1,\dots,\beta_k\in \co_{2^n}$, then for all odd $i$ we have $a_i\equiv 0 \pmod{2}$.
	\end{enumerate}
\end{lemma}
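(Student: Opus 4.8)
The plan is to reduce everything to the single product-to-sum identity
\[\omega_i\omega_j=\omega_{i+j}+\omega_{i-j}\qquad(i,j\geq 1),\]
which holds because $(\zeta_q^i+\zeta_q^{-i})(\zeta_q^j+\zeta_q^{-j})$ expands as a sum of four powers of $\zeta_q$, together with three bookkeeping facts. First, $\omega_{-k}=\omega_k$, but for $i=j$ the identity produces $\zeta_q^0+\zeta_q^0=2$ rather than the basis element $\omega_0=1$; so $\omega_i^2=\omega_{2i}+2$ for $i\geq 1$ while $\omega_0^2=\omega_0$. Second, since $\zeta_q^{2d}=\zeta_{2^n}^{2^{n-1}}=-1$, one may fold any index back into $\{0,\dots,d-1\}$ via $\omega_d=0$ and $\omega_{d+r}=-\omega_{d-r}$ for $1\leq r\leq d-1$. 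Third, the Newton's-identity computation in the proof of Proposition~\ref{prop:alpha2n}(ii) gives $\Tr_{K_q/\Q}(\omega_m)=0$ for $1\leq m\leq d-1$; combining this with the folding rules extends it to $\Tr_{K_q/\Q}(\omega_m)=0$ for every $1\leq m\leq 2d-2$, which is the full range of indices that can occur as $i+j$ or $|i-j|$ when $i,j\in\{0,\dots,d-1\}$.

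For part (i) I would expand $\beta^2=\sum_{i,j=0}^{d-1}a_ia_j\,\omega_i\omega_j$ and take traces term by term. The term $i=j=0$ contributes $a_0^2\,\Tr_{K_q/\Q}(1)=a_0^2d$; each term $i=j\geq 1$ contributes $a_i^2\,\Tr_{K_q/\Q}(\omega_{2i}+2)=a_i^2(0+2d)$ by the facts above; the terms with exactly one of $i,j$ equal to $0$ give $\omega_0\omega_j=\omega_j$ and hence trace $0$; and the off-diagonal terms with $i\neq j$, $i,j\geq 1$ give $\omega_{i+j}+\omega_{i-j}$ with both indices in $[1,2d-2]$, hence trace $0$. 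Adding up yields $\Tr_{K_q/\Q}(\beta^2)=d\bigl(a_0^2+2\sum_{i=1}^{d-1}a_i^2\bigr)$.

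For part (ii) the key observation is that it suffices to treat a single square: I claim that for $\gamma\in\co_{2^n}$ the residue of $\gamma^2$ in $\co_{2^n}/2\co_{2^n}$, expressed in the basis $\omega_0,\dots,\omega_{d-1}$, is supported only on even-index basis vectors; summing this over $\gamma=\beta_1,\dots,\beta_k$ then forces every odd-index coordinate of $\beta$ to be even. To prove the claim, write $\gamma=\sum_i c_i\omega_i$; the cross terms $2c_ic_j\omega_i\omega_j$ lie in $2\co_{2^n}$ and $c_i^2\equiv c_i\pmod 2$, so $\gamma^2\equiv\sum_i c_i\omega_i^2\pmod{2\co_{2^n}}$. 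Now $\omega_0^2=\omega_0$ (index $0$, even), and for $i\geq 1$ we have $\omega_i^2\equiv\omega_{2i}\pmod{2\co_{2^n}}$, which after folding equals $0$ (when $2i=d$), or $\pm\omega_{2i}$ (when $2i\leq d-1$), or $\pm\omega_{2d-2i}$ (when $d+1\leq 2i\leq 2d-2$) --- in every case a multiple of a basis vector of even index. This establishes the claim.

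The main obstacle is purely the index arithmetic collected in the first paragraph: keeping the $\omega_0=1$ versus $\zeta_q^0+\zeta_q^0=2$ distinction straight, and verifying that $\omega_d=0$ and $\omega_{d+r}=-\omega_{d-r}$ together with the cited trace vanishing really do cover every index that appears. Once those are nailed down, both (i) and (ii) are short computations.
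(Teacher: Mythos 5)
Your proof is correct and follows essentially the same route as the paper: expand $\beta^2$ via $\omega_i^2=2+\omega_{2i}$ and $\omega_i\omega_j=\omega_{i+j}+\omega_{i-j}$, use $\Tr_{K_q/\Q}(\omega_m)=0$ for the relevant indices to get (i), and reduce modulo $2$ for (ii). The only difference is that you spell out the index-folding rules $\omega_d=0$, $\omega_{d+r}=-\omega_{d-r}$ and why $\omega_{2i}$ always lands on an even-index basis vector, which the paper's proof leaves implicit.
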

\begin{proof}
	$(i)$ Note that for $0<j<l\le d-1$, we have
	\begin{align*}
		\omega_j^2&=2+\omega_{2j}\\
		\omega_j\omega_l&=\omega_{l+j}+\omega_{l-j}.
	\end{align*}
	Therefore, 
	\begin{align}\label{eq:4}
		\left(\sum_{i=0}^{d-1} a_i\omega_i\right)^2&=a_0^2+2a_0\sum^{d-1}_{i=1}a_i\omega_i+\sum_{i=1}^{d-1}a_i^2(2 +\omega_{2i})+2\sum_{1\leq j<i\leq d-1}  a_ia_j(\omega_{i+j}+\omega_{i-j}).
	\end{align}

	Therefore, the formula for the trace follows from the fact that $\Tr_{K_{2^n}/\Q}(\omega_i)=0$ for $i\neq 0$.
	
	$(ii)$ using \eqref{eq:4} again, we have
	\begin{align*}
		\beta_l^2=\left(\sum_{i=0}^{d-1} b_i\omega_i\right)^2=b_0^2+\sum_{j=1}^{d-1}b_j^2(2 +\omega_{2j})+2\sum_{0\leq j<i\leq d-1} b_ib_j(\omega_{i+j}+\omega_{i-j})
	\end{align*}
	for any $b_i\in \Z.$ 
	
	As $\beta$ is the sum of several elements of this form, we see that  $a_i\equiv 0\pmod{2}$ for odd $i$.
\end{proof}

Further, we now prove the following.

\begin{prop}\label{cor:2squares}
	For $n>3$, $2\alpha_{n}$ cannot be represented as the sum of squares in $\co_{2^n}.$
\end{prop}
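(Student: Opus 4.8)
The plan is to suppose that $2\alpha_n = \sum_{j=1}^k \beta_j^2$ for some $\beta_j \in \co_{2^n}$ and derive a contradiction by combining the trace formula of Lemma~\ref{lemma:sqr}(i) with the norm information from Propositions~\ref{prop:2} and~\ref{prop:alpha2n}. First I would record that $\Tr_{K_{2^n}/\Q}(2\alpha_n) = 4d$ (doubling Proposition~\ref{prop:alpha2n}(ii)). Writing each $\beta_j = \sum_{i=0}^{d-1} a_i^{(j)}\omega_i$ and applying Lemma~\ref{lemma:sqr}(i), the trace of $\beta_j^2$ equals $d\bigl((a_0^{(j)})^2 + 2\sum_{i\geq 1}(a_i^{(j)})^2\bigr)$, which is a nonnegative multiple of $d$, and is $0$ only if $\beta_j = 0$. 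Summing, $4d = \sum_j \Tr(\beta_j^2)$ forces at most four nonzero summands, so the representation is $2\alpha_n = \beta_1^2 + \beta_2^2 + \beta_3^2 + \beta_4^2$ or a shorter sum, and moreover each nonzero $\beta_j$ has $\Tr(\beta_j^2) = d$, which (again by Lemma~\ref{lemma:sqr}(i)) means exactly one of $a_0^{(j)}, a_1^{(j)}, \dots$ is $\pm 1$: either $\beta_j = \pm 1$ (a unit), or $\beta_j = \pm\omega_i$ for some $i$ with $1\leq i\leq d-1$.

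Next I would use the norm. Since $\Nr(2\alpha_n) = 2^d \cdot \Nr(\alpha_n) = 2^{d+1}$, and $\Nr(\beta_j^2) = \Nr(\beta_j)^2$ is a perfect square, comparing with Lemma~\ref{lemma:norm_bound} (which gives $\Nr(2\alpha_n)^{1/d} \geq \sum_j \Nr(\beta_j^2)^{1/d}$, i.e. $2^{(d+1)/d} \geq \sum_j \Nr(\beta_j)^{2/d}$) should cut the list of possible shapes down drastically. A unit $\beta_j = \pm 1$ contributes $1$ to the right side; an element $\pm\omega_i$ with $1\le i\le d-1$ has $|\Nr(\omega_i)| = |\Nr(\zeta_q^i + \zeta_q^{-i})|$, which one computes to be $1$ for these indices (the $\omega_i$, $1\le i\le d-1$, are units in $\co_{2^n}$, as $\zeta_q^i + \zeta_q^{-i} = \zeta_q^i(1 + \zeta_q^{-2i})$ and $1+\zeta_q^{m}$ is a unit unless $q \mid 2m$), so it too contributes $1$. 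Hence every nonzero $\beta_j$ has norm $\pm 1$, i.e. is a unit, and then $\Nr(2\alpha_n) = \prod$... — but $2\alpha_n$ is not a unit, contradiction unless I have miscounted; more precisely $\Nr(2\alpha_n)=2^{d+1}$ cannot equal $\prod_j 1$, so at least one $\beta_j$ must be a nonunit, contradicting the previous sentence. I would present this as: each nonzero $\beta_j$ in a minimal-length representation is forced by the trace bound to be $\pm 1$ or $\pm\omega_i$ ($1\le i\le d-1$), all of which are units, so $2\alpha_n$ would be a sum of at most four squares of units; but $\Nr(2\alpha_n) = 2^{d+1} > 4^{?}$... — this numeric clash is the crux and needs care, so let me instead route through parity.

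The cleanest finish, and the one I would actually write, uses Lemma~\ref{lemma:sqr}(ii) directly. Expand $2\alpha_n = 2\omega + 4 = 4\omega_0 + 2\omega_1$ in the basis $\omega_0,\dots,\omega_{d-1}$; the coefficient of $\omega_1$ is $2$, which is even, so Lemma~\ref{lemma:sqr}(ii) alone does not immediately object. So I refine: from the analysis above, in $2\alpha_n = \sum_j \beta_j^2$ each nonzero $\beta_j$ equals $\pm\omega_{i_j}$ or $\pm 1$; computing $\omega_i^2 = 2 + \omega_{2i}$ (reducing indices via $\omega_{d}=0$, $\omega_{d+r} = -\omega_{d-r}$ valid for $q=2^n$, since $\zeta_q^{d} = i$), the $\omega_1$-coefficient of $\sum_j \beta_j^2$ is $0$ unless some $i_j$ satisfies $2i_j \equiv \pm 1 \pmod{2d}$ — impossible since $2i_j$ is even. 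Hence the $\omega_1$-coefficient of $\sum_j \beta_j^2$ is $0$, whereas that of $2\alpha_n$ is $2 \ne 0$, a contradiction. I expect the main obstacle to be the bookkeeping of index reductions $\omega_{a}$ for $a \ge d$ — establishing $\omega_d = 0$ (already noted in the text) and the reflection $\omega_{d+r} = -\omega_{d-r}$ for $q = 2^n$ — and confirming that $2i_j$ can never hit the residues $\pm 1 \pmod{2d}$, i.e. that no cross term or square term of a unit $\pm\omega_i$ can produce an $\omega_1$; once that is pinned down, the contradiction with the explicit expansion $2\alpha_n = 4\omega_0 + 2\omega_1$ is immediate.
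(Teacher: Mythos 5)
There is a genuine gap, and it sits exactly where the hypothesis $n>3$ has to be used. Your trace bookkeeping is wrong: from $4d=\sum_j \Tr_{K_{2^n}/\Q}(\beta_j^2)$ with each nonzero term a positive integer multiple of $d$, you may only conclude that the traces form a partition of $4$, namely $4$, $3+1$, $2+2$, $2+1+1$, or $1+1+1+1$ --- not that every nonzero $\beta_j$ has $\Tr(\beta_j^2)=d$. (Your claim is also internally inconsistent: by Lemma \ref{lemma:sqr}(i) an element with $\Tr(\beta_j^2)=d$ must be $\pm 1$, since $\Tr(\omega_i^2)=2d$.) Consequently your list of admissible $\beta_j$ omits $\pm 2$, $\pm 1\pm\omega_i$ (trace $3d$), and $\pm\omega_i\pm\omega_h$ (trace $4d$). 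The omitted shape $\pm 1\pm\omega_i$ is precisely the dangerous one: $1^2+(\pm 1\pm\omega_1)^2=4\pm 2\omega_1+\omega_2$, which \emph{does} have a nonzero $\omega_1$-coefficient and matches $2\alpha_n=4+2\omega_1$ except for the leftover $\omega_2$. Ruling it out requires observing that $\omega_2\neq 0$ exactly when $n>3$; for $n=3$ one has $d=2$, $\omega_2=\omega_d=0$, and indeed $2\alpha_3=4+2\sqrt 2=1^2+(1+\sqrt 2)^2$ \emph{is} a sum of squares. Since your final parity argument never invokes $n>3$, it would ``prove'' a statement that is false for $n=3$; this alone shows the argument cannot be complete. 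The paper's proof instead enumerates all elements with $\Tr(\beta^2)\le 4d$ and checks every admissible combination, with the $1^2+(\pm1\pm\omega_j)^2$ case eliminated by $\omega_{2j}\neq 0$.

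Two smaller points. Your norm digression is salvageable in a different direction than you took it: Lemma \ref{lemma:norm_bound} gives $2^{(d+1)/d}=\Nr(2\alpha_n)^{1/d}\ge\sum_j\Nr(\beta_j)^{2/d}\ge k$, so at most two $\beta_j$ are nonzero (a restriction the paper does not need), but this still leaves the $3d+d$ trace split and hence the critical case above. Also, your parenthetical claim that all $\omega_i$ with $1\le i\le d-1$ are units is false: e.g.\ for $q=16$ one has $\omega_2=\sqrt 2$ of norm $4$; only the $\omega_i$ with $i$ odd are units here.
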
 

\begin{proof}
	Let $2\alpha_n=\sum\beta_i^2,$ with $\beta_i\in \co_{2^n}.$ Given that the trace of $2\alpha_n$ is $4d$, by Lemma \ref{lemma:sqr}$(i)$, the only possibilities for $\beta_i$ are $\pm 1, \pm 2, \pm \omega_j, \pm 1\pm \omega_j$, or $\pm \omega _j \pm \omega_{j'}$. The respective traces are $\Tr_{K_{2^n}/\Q}(\beta_i^2)=d,4d,2d,3d$, or $4d$. Let us now go through all the possible decompositions.
    
    Clearly, $2\alpha_n\not = 4$. This also excludes the possibility of the sum of four squares of $\pm 1$. 
    
    Another case is that $2\alpha_n=(\pm \omega _j \pm \omega_{j'})^2$, but then
	\begin{align*}
	2\omega + 4&=2\alpha_n\\
	&=(\pm \omega _j \pm \omega_{j'})^2\\
	&=\omega _j^2+\omega_{j'}^2\pm 2\omega_{j}\omega_{j'}\\
	&=4+\omega_{2j}+\omega_{2j'}\pm 2 (\omega_{j+j'}+\omega_{j-j'})
	\end{align*}
	which is impossible since the representation in an integral basis is unique. 
    
    This leaves the cases $1^2+(\pm 1\pm \omega_j)^2, \omega_j^2+\omega_{j'}^2$ and $1^2+1^2+\omega_j^2$, which give us $4\pm 2\omega_j+\omega_{2j}$, $4+\omega_{2j}+\omega_{2j'}$, and $4+\omega_{2j},$ respectively. And only the first sum can equal to $2\alpha_n$, in the excluded case when $q=2^3$ and $j=1$, i.e., $4+2\omega_1+{\omega_4}$ (as $\omega_4=0$).
\end{proof}

 To proceed, it will be useful to complement Proposition \ref{prop:alpha2n} with similar information about the element $\gamma_n$ (cf. \cite[Lemma~4 in 5A]{Sc2}).
 
\begin{prop}\label{prop:gamma} We have:
\begin{enumerate}[label=(\roman*)]
    \item $\gamma_n\succ 0$,
    \item  $\Tr_{K_{2^n}/\Q}(\gamma_n)=3d$, 
    \item $\Nr_{K_{2^n}/\Q}(\gamma_n)=2^d+1.$
\end{enumerate}
\end{prop}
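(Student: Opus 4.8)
## Proof proposal for Proposition \ref{prop:gamma}

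\textbf{Overall strategy.} The plan is to mirror the proof of Proposition \ref{prop:alpha2n}, handling the three items by the same tools: positivity from the house bound $\house{\omega_j}<2$, the trace from $\Tr_{K_{2^n}/\Q}(\omega_j)=0$ for $j\not\equiv 0$, and the norm by recognising $\gamma_n$ as (up to units or via a factorisation over a subfield) the relative norm of a simple cyclotomic element. Items (i) and (ii) should be essentially immediate; the norm computation in (iii) is the one place where genuine work is needed.

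\textbf{Item (i): positivity.} For each embedding $\sigma_i$ we have $|\sigma_i(\omega)|<2$ and $|\sigma_i(\omega_{d-1})|<2$ by the remark in Section \ref{s:2} that $\house{\omega_j}<2$. Hence $\sigma_i(\gamma_n)=\sigma_i(\omega)+\sigma_i(\omega_{d-1})+3>3-2-2=-1$. That bound is not quite enough on its own, so I would sharpen it: writing $\zeta=\zeta_{2^n}$, note $\omega+\omega_{d-1}=\zeta+\zeta^{-1}+\zeta^{d-1}+\zeta^{-(d-1)}$, and in each embedding this is $2\cos\theta+2\cos((d-1)\theta)=4\cos\frac{d\theta}{2}\cos\frac{(d-2)\theta}{2}$ for the appropriate angle $\theta$ which is an odd multiple of $2\pi/2^n$. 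A direct estimate of this product (or simply observing, as in \cite{Sc2}, that $\omega+\omega_{d-1}+3$ is totally positive because it is congruent to a totally positive quantity) gives $\sigma_i(\gamma_n)>0$. Alternatively, and most cleanly, (iii) will show $\Nr_{K_{2^n}/\Q}(\gamma_n)=2^d+1>0$ while all conjugates lie in $(-1,\infty)$; since the number of negative conjugates is even and each negative one lies in $(-1,0)$ while each positive one is bounded, a short argument rules out any negative conjugate. I would present whichever of these is shortest; I expect citing the structure from \cite{Sc2} is cleanest.

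\textbf{Item (ii): trace.} Since $\Tr_{K_{2^n}/\Q}(\omega_j)=0$ for every $j$ with $1\le j\le d-1$ (this was established inside the proof of Proposition \ref{prop:alpha2n}$(ii)$, using that $\Tr_{\Q(\zeta_{2^n})/\Q}(\zeta_{2^n}^j)=0$ and the degree-$2$ relation), and $\Tr_{K_{2^n}/\Q}(3)=3d$, linearity gives $\Tr_{K_{2^n}/\Q}(\gamma_n)=0+0+3d=3d$. The only thing to check is that $d-1$ is in the range where the trace vanishes, i.e. $1\le d-1\le d-1$, which holds for $d\ge 2$, i.e. $n\ge 3$; since $\gamma_n$ is only defined for $n>3$ this is fine.

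\textbf{Item (iii): norm — the main obstacle.} Here I would identify $\gamma_n$ with a norm of a cyclotomic unit-type element. The natural guess, following Scharlau, is that $\gamma_n=\Nr_{\Q(\zeta_{2^n})/K_{2^n}}(\delta)$ for some $\delta$ of the shape $1+\zeta^{a}+\zeta^{b}$ or, more likely, that $\gamma_n$ itself factors over $\Q(\zeta_{2^n})$ into a product of terms $(\zeta^k - \text{something})$ whose total norm down to $\Q$ is $2^d+1$. Concretely: $\zeta^{d-1}=\zeta^{2^{n-2}-1}$, and $\zeta^{2^{n-2}}$ is a primitive $4$th root of unity, say $i$; so $\omega_{d-1}=\zeta\cdot i^{-1}\cdot(\text{units}) + \dots$ — I would make this precise and rewrite $\gamma_n$ as $\zeta^{-(d-1)}$ times a polynomial in $\zeta$ of controlled degree, then compute $\Nr_{\Q(\zeta_{2^n})/\Q}$ by evaluating the relevant resultant, equivalently $\prod_{\gcd(k,2^n)=1}(\text{that polynomial at }\zeta^k)$. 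The cleanest route is probably: show $\gamma_n = \zeta^{-(d-1)}(1+\zeta^{d-1})(\text{something}) $ or directly that $\gamma_n$ has minimal polynomial whose constant term (up to sign) is $2^d+1$; since $2^d+1$ is exactly the value one gets from evaluating $x^{2^{n-1}}+1$-type cyclotomic data at a point, I would look for $\delta\in\Q(\zeta_{2^n})$ with $\Nr_{\Q(\zeta_{2^n})/\Q}(\delta)=(2^d+1)^2$ and $\Nr_{\Q(\zeta_{2^n})/K_{2^n}}(\delta)=\gamma_n$, forcing $\Nr_{K_{2^n}/\Q}(\gamma_n)=2^d+1$. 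The hard part will be pinning down the exact $\delta$ and verifying the factorisation identity $\delta\cdot\bar\delta = \gamma_n$ in $\Z[\zeta_{2^n}]$, together with the evaluation of the cyclotomic norm; once the right $\delta$ is written down, the norm computation reduces to substituting a root of unity into $x^{2^{n-1}}+1$ and its factors, which is routine. I would lean heavily on \cite[Lemma 4 in 5A]{Sc2} for the precise form of this computation.
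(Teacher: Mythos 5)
Your items (i) and (ii) are on the right track, but item (iii) --- which is the real content of the proposition --- is left as an unexecuted plan rather than a proof. You correctly guess that $\gamma_n$ should arise as a relative norm from $\Q(\zeta_{2^n})$, but you never produce the element $\delta$ nor verify the factorisation, and you explicitly defer ``the hard part.'' There is also a slip in the norm transitivity: if $\Nr_{\Q(\zeta_{2^n})/K_{2^n}}(\delta)=\gamma_n$, then $\Nr_{K_{2^n}/\Q}(\gamma_n)=\Nr_{\Q(\zeta_{2^n})/\Q}(\delta)$ directly, so you would need $\Nr_{\Q(\zeta_{2^n})/\Q}(\delta)=2^d+1$, not $(2^d+1)^2$ as you wrote. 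For the record, your plan can be completed: $\delta=1+i+\zeta_{2^n}$ satisfies $\delta\bar\delta=3+\omega_1+\omega_{d-1}=\gamma_n$ (using $\omega_d=0$ and $\omega_{j+2d}=-\omega_j$), and computing $\Nr_{\Q(\zeta_{2^n})/\Q(i)}(\delta)$ via the minimal polynomial $x^d-i$ of $\zeta_{2^n}$ over $\Q(i)$ gives $2^{d/2}-i$ (for $n\geq 5$), whence the norm to $\Q$ is $2^d+1$. The paper instead avoids $\Q(\zeta_{2^n})$ entirely and computes $\Nr_{K_{2^n}/K_{2^{n-j}}}(\gamma_n)$ by induction down the tower of real subfields, starting from $(\omega+\omega_{d-1})^2=4+2\omega_{d-2}$ and establishing the closed form $2^{2^j}+1-2^{2^{j-1}}\omega_{2^{j-1}d-2^j}$; either route works, but as submitted yours is a sketch with the key identity missing.

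On item (i), your cosine-product estimate does work and is worth committing to: in each embedding $\omega+\omega_{d-1}=4\cos(d\theta/2)\cos((d-2)\theta/2)$ with $d\theta/2=\pi k/4$ for odd $k$, so $\abs{\cos(d\theta/2)}=1/\sqrt2$ and $\abs{\omega+\omega_{d-1}}\leq 2\sqrt2<3$, giving $\gamma_n\succ 0$. However, your fallback argument via (iii) --- ``the number of negative conjugates is even and each lies in $(-1,0)$, so a short argument rules them out'' --- does not go through: with two negative conjugates in $(-1,0)$ and the remaining ones bounded by $3+2\sqrt2$, the norm bound you would obtain is far larger than $2^d+1$, so no contradiction arises. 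The paper's route for (i) is different again: it observes that the relative trace $3$ and the relative norm $5-2\omega_{d-2}$ of $\gamma_n$ over $K_{2^{n-1}}$ are both totally positive, which forces both real roots of the defining quadratic to be positive; this has the advantage of falling out of the same computation used for (iii).
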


\begin{proof} The proofs of $(ii)$ and $(iii)$ will be used to derive $(i)$. Specifically, we shall show that $\Tr_{K_{2^n}/K_{2^{n-1}}}(\gamma_n),\Nr_{K_{2^n}/K_{2^{n-1}}}(\gamma_n)\succ 0$, and so each defining polynomials in our tower of quadratic extensions has totally positive roots. This is enough to conclude that $\gamma_n\succ 0$.

For $(ii)$, note that $\Tr_{K_{2^n}/K_{2^{n-1}}}(\omega_j)=\Tr_{K_{2^n}/\Q}(\omega_j)=0$ for all $j$. And directly from the definition of $\gamma_n$ it follows that 
\begin{align*}
    \Tr_{K_{2^n}/\Q}(\gamma_n)&=\Tr_{K_{2^n}/\Q}(\omega+\omega_{d-1}+3)\\&=3d.
\end{align*} 
Thus, follows $(ii)$, and shows that the relative trace of $\gamma_n$ is totally positive, which we require for $(i)$. 

For $(iii),$ we begin by observing that
	\begin{align*}
	(\omega+\omega_{d-1})^2&=\omega^2+2\omega\omega_{d-1}+\omega_{d-1}^2\\
	&=4+\omega_2+\omega_{2d-2}+2\omega_d+2\omega_{d-2}\\
	&=4+2\omega_{d-2}.
	\end{align*} 
	We used that $\omega_d=0$ and $\omega_2=\zeta^2_{2^n}+\zeta^{-2}_{2^{n}}=-(\zeta^{2^{n-1}+2}_{2^n}+\zeta^{-2^{n-1}-2}_{2^n})=-\omega_{2d-2}$, as $\zeta^{2^{n-1}}_{2^n}=-1$.
    
Therefore, 
\begin{align*}
\Nr_{K_{2^n}/K_{2^{n-1}}}(\gamma_n)&=9-(\omega+\omega_{d-1})^2\\
&=5-2\omega_{d-2}.
\end{align*}
As $\house{\omega_i}<2$, the above integer is totally positive. Therefore, the relative norm of $\gamma_n$ is totally positive and $(i)$ follows, i.e., $\gamma\succ 0.$ 
To complete the proof of $(iii)$ we compute another relative norm,  
\begin{align*}
\Nr_{K_{2^{n-1}}/K_{2^{n-2}}}(5-2\omega_{d-2})&=25-4\omega_{d-2}^2.
\end{align*}
If $n=4$, then $d=4$ and $\omega_2=\sqrt{2}$, and thus the above is $17=2^4+1$. Furthermore, using $\omega_{2d-2}^2=2+\omega_{2d-4}$, we can write the norm above as:
		\begin{align*}
	\Nr_{K_{2^n}/K_{2^{n-2}}}(\gamma_n)&=17-4\omega_{2d-4}\\
	&=2^4+1-2^2\omega_{2d-4}\\
	&=2^{2^2}+1-2^2\omega_{2d-2^2}.
	\end{align*}
	We claim that 
 \begin{equation*}
     \Nr_{K_{2^n}/K_{2^{n-j}}}(\gamma_n)=2^{2^j}+1-2^{2^{j-1}}\omega_{2^{j-1}d-2^j},
 \end{equation*} which we prove by completing the successive norm:
		\begin{align*}
		(2^{2^j}+1)^2-(2^{2^{j-1}}\omega_{2^{j-1}d-2^j})^2&=2^{2^{j+1}}+2^{2^j+1}+1-2^{2^j+1}-2^{2^j}\omega_{2^jd-2^{j+1}}\\
		\Nr_{K_{2^n}/K_{2^{n-j-1}}}(\gamma_n)&=2^{2^{j+1}}+1-2^{2^j}\omega_{2^jd-2^{j+1}}.
	\end{align*}
	Now, when $j=n-2$, then $K_{2^{n-j}}=K_{4}=\Q$. Furthermore, for this $j$, we have $\omega_{2^{j-1}d-2^j}=0$, and the result follows.
\end{proof}

Now we begin proving that $\alpha_n, \gamma_n$ and $\alpha_n\gamma_n$ are indecomposable.

\begin{prop}\label{cor:alpha} Let $F$ be a totally real number field that contains $K_{2^n}$. Then 
	$\alpha_n$ is indecomposable in $F$. 
\end{prop}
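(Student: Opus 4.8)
The plan is to obtain indecomposability from a pure norm estimate, combining Lemma~\ref{lemma:norm_bound} with the data computed in Proposition~\ref{prop:alpha2n}. The key point is that $\alpha_n$ has norm only $2$ over $K_{2^n}$, which is small enough that the superadditivity of the $d$-th root of the norm already forbids any nontrivial splitting, regardless of how large $F$ is.

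First I would set up the contradiction hypothesis: suppose $\alpha_n$ decomposes in $F$, i.e.\ there is $\beta\in\co_F^+$ with $\alpha_n\succ\beta$. Put $\gamma=\alpha_n-\beta$; then $\gamma\succ 0$ by definition of $\succ$, so $\gamma\in\co_F^+$ as well, and both $\beta$ and $\gamma$ are nonzero. Consequently $\Nr_{F/\Q}(\beta)$ and $\Nr_{F/\Q}(\gamma)$ are positive rational integers, hence each is $\ge 1$.

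Next I would compute $\Nr_{F/\Q}(\alpha_n)$. Since $\alpha_n\in K_{2^n}$, the $[F:\Q]$ real embeddings of $F$ restrict to the $d$ embeddings of $K_{2^n}$, each occurring exactly $[F:K_{2^n}]$ times; hence $\Nr_{F/\Q}(\alpha_n)=\Nr_{K_{2^n}/\Q}(\alpha_n)^{[F:K_{2^n}]}=2^{[F:K_{2^n}]}$ by Proposition~\ref{prop:alpha2n}(iii). Writing $D=[F:\Q]=d\cdot[F:K_{2^n}]$ with $d=2^{n-2}\ge 2$, this gives $\Nr_{F/\Q}(\alpha_n)^{1/D}=2^{1/d}\le\sqrt 2<2$.

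Finally, Lemma~\ref{lemma:norm_bound}, applied in $F$ to the totally positive integers $\beta,\gamma$ (totally positive since $\alpha_n\succ 0$ by Proposition~\ref{prop:alpha2n}(i)), yields $\Nr_{F/\Q}(\alpha_n)^{1/D}\ge\Nr_{F/\Q}(\beta)^{1/D}+\Nr_{F/\Q}(\gamma)^{1/D}\ge 1+1=2$, contradicting the bound $2^{1/d}<2$ from the previous step. There is essentially no hard step here — this is the easiest of the three indecomposability claims, precisely because $\Nr(\alpha_n)=2$ is tiny; the only thing to watch is that $d\ge 2$ (guaranteed by $n\ge 3$), which makes the inequality $2^{1/d}<2$ strict and hence makes the argument go through over \emph{every} totally real $F\supseteq K_{2^n}$, not merely over $K_{2^n}$ itself. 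The Weber-type input (Proposition~\ref{prop:2}) and the finer trace computations will instead be needed for the subsequent, genuinely harder cases of $\gamma_n$ and $\alpha_n\gamma_n$.
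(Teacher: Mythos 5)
Your proof is correct, but it takes a different route from the paper's. The paper disposes of this in one line via the trace: by Proposition~\ref{prop:alpha2n}(ii), $\Tr_{F/\Q}(\alpha_n)=2d\,[F:K_{2^n}]=2[F:\Q]<\tfrac{5}{2}[F:\Q]$, so part c) of Theorem~\ref{cor:5/2} (Siegel's trace theorem) gives that $\alpha_n$ is indecomposable or rational, and the latter is absurd since $\omega\notin\Q$. You instead use the norm: $\Nr_{F/\Q}(\alpha_n)^{1/[F:\Q]}=2^{1/d}<2$ because $d=2^{n-2}\ge 2$, while O'Meara's superadditivity (Lemma~\ref{lemma:norm_bound}) forces any sum of two totally positive integers to have normalized norm at least $2$. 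Both arguments are one-step deductions from the data in Proposition~\ref{prop:alpha2n} and behave identically under base change to $F$ (the trace and the normalized $D$-th root of the norm are both stable in the relevant sense); your version leans on part (iii) of that proposition rather than part (ii), and on Lemma~\ref{lemma:norm_bound} rather than the Siegel-type trace bound, so it avoids having to separately rule out $\alpha_n\in\Z$. It is worth noting that your argument is special to norm exactly $2$ (norm $3$ would already allow $1+2^{1/d}\cdot$something in principle), whereas the trace argument is the one the paper can reuse for $\gamma_n$, whose norm $2^d+1$ is large; so the two tools are genuinely complementary here rather than interchangeable across the later propositions.
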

\begin{proof}
According to $(ii)$ in Proposition~\ref{prop:alpha2n}, we have that $\Tr_{F/\Q}(\alpha_n)=2d[F:K_{2^n}]=2[F:\Q]$, and so $\alpha_n$ is indecomposable by part c) of Theorem~\ref{cor:5/2}.
\end{proof}

\begin{prop}\label{cor:gamma} Let $F$ be a totally real number field that contains $K_{2^n}$. Then 
    $\gamma_n$ is indecomposable in $F$.
\end{prop}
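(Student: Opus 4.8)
The plan is to suppose, for contradiction, that $\gamma_n=\rho+\tau$ with $\rho,\tau\in\co_F^+$, use the small trace of $\gamma_n$ to pin the smaller summand down to one of two explicit elements, and then show that removing either one from $\gamma_n$ destroys total positivity. Concretely, by Proposition~\ref{prop:gamma}(ii) and the tower formula, $\Tr_{F/\Q}(\gamma_n)=[F:K_{2^n}]\,\Tr_{K_{2^n}/\Q}(\gamma_n)=3d[F:K_{2^n}]=3[F:\Q]$. Since $\Tr_{F/\Q}(\rho)$ and $\Tr_{F/\Q}(\tau)$ are each at least $[F:\Q]$ and sum to $3[F:\Q]$, one of them, say $\tau$, satisfies $\Tr_{F/\Q}(\tau)\le\tfrac32[F:\Q]$, and part a) of Theorem~\ref{cor:3/2} then forces $\tau=1$ or $\tau=\phi+1$ with $\phi^2-\phi-1=0$. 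So it suffices to show that neither $\gamma_n-1$ nor $\gamma_n-(\phi+1)$ is totally positive.

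For $\tau=\phi+1$ I would argue by trace again. Here $\phi\in F$, and since $\Q(\zeta_{2^n})\cap\Q(\zeta_5)=\Q$ we have $\sqrt5\notin K_{2^n}$, so $[\Q(\phi):\Q]=2$ and hence $\Tr_{F/\Q}(\phi)=\tfrac12[F:\Q]$. Therefore $\Tr_{F/\Q}(\gamma_n-\phi-1)=3[F:\Q]-\tfrac12[F:\Q]-[F:\Q]=\tfrac32[F:\Q]$, so if $\gamma_n-\phi-1$ were totally positive it would again be $1$ or $\phi+1$ by part a) of Theorem~\ref{cor:3/2}; that would give $\gamma_n=\phi+2$ or $\gamma_n=2\phi+2=3+\sqrt5$, both impossible since $\gamma_n\in K_{2^n}$ and $\sqrt5\notin K_{2^n}$.

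The case $\tau=1$ is the main obstacle, because $\Tr_{F/\Q}(\gamma_n-1)=2[F:\Q]$ is too large for Theorem~\ref{cor:3/2} to apply, and one must instead control the smallest conjugate of $\gamma_n$. My plan is to reuse the relative data over $K_{2^{n-1}}$ already obtained in the proof of Proposition~\ref{prop:gamma}, namely $\Tr_{K_{2^n}/K_{2^{n-1}}}(\gamma_n)=6$ and $\Nr_{K_{2^n}/K_{2^{n-1}}}(\gamma_n)=5-2\omega_{d-2}$. Writing $\sigma$ for the nontrivial element of $\operatorname{Gal}(K_{2^n}/K_{2^{n-1}})$, this yields $(\gamma_n-1)\,\sigma(\gamma_n-1)=(5-2\omega_{d-2})-6+1=-2\omega_{d-2}$. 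If $\gamma_n-1$ were totally positive, so would be its Galois conjugate $\sigma(\gamma_n-1)$, and hence the product $-2\omega_{d-2}$; but $\Tr_{K_{2^n}/\Q}(\omega_{d-2})=0$ (and $0<d-2<d$ since $n>3$), so $\omega_{d-2}$ is neither totally positive nor totally negative, making $-2\omega_{d-2}$ not totally positive, a contradiction. Equivalently one can make this numerically transparent: $\gamma_n=3+\sqrt{4+2\omega_{d-2}}$, the largest conjugate of $\omega_{d-2}$ is $2\cos(\pi/2^{n-2})$, so (using $1+\cos\theta=2\cos^2(\theta/2)$) the smallest conjugate of $\gamma_n$ equals $3-2\sqrt2\cos(\pi/2^{n-1})$, which is $<1$ precisely because $n>3$. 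In all cases $\rho=\gamma_n-\tau$ fails to be totally positive, contradicting the assumed decomposition, so $\gamma_n$ is indecomposable in $F$. The one step I would be most careful about is exactly the $\tau=1$ case; the factorization over $K_{2^{n-1}}$ together with the vanishing of $\Tr(\omega_{d-2})$ disposes of it cleanly, and everything else is routine bookkeeping with traces.
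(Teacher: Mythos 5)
Your proof is correct, and the core of it takes a genuinely different route from the paper's. Both arguments share the opening move: since $\Tr_{F/\Q}(\gamma_n)=3[F:\Q]$, any decomposition has a summand of trace at most $\tfrac32[F:\Q]$, which must be $1$ or $\phi+1$ by Theorem \ref{thm:1}a). The paper then disposes of both cases at once by noting that the complementary summand (resp.\ $\phi$) would have to lie in $K_{2^n}$, so $\gamma_n$ would already decompose in $K_{2^n}$ itself --- and indecomposability there is established separately by the norm criterion of Proposition \ref{lemma:norm} with $c=3/2$, using $\Nr_{K_{2^n}/\Q}(\gamma_n)=2^d+1<\tfrac52\cdot 2^{d-1}$. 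You instead kill each case directly: the $\phi+1$ case by a second application of the trace bound (valid, though you should also list the possibility that the two summands are the conjugate pair $\phi+1$ and $\phi'+1$, giving $\gamma_n=3$ --- instantly false since $\gamma_n\notin\Q$), and the $\tau=1$ case via the identity $(\gamma_n-1)\,\sigma(\gamma_n-1)=-2\omega_{d-2}$ over $K_{2^{n-1}}$, whose right-hand side is a nonzero trace-zero element and hence not totally positive. Your route avoids Proposition \ref{lemma:norm} and Lemma \ref{lemma:norm_bound} entirely and yields the concrete fact that the smallest conjugate of $\gamma_n$ equals $3-2\sqrt2\cos(\pi/2^{n-1})<1$ precisely when $n>3$; the paper's route is shorter on the page because it recycles the already-computed norm $2^d+1$ and a general-purpose indecomposability criterion that is reused elsewhere. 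Both are valid proofs.
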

\begin{proof}
By part $(iii)$ of Proposition~\ref{prop:gamma} we have $\Nr_{K_{2^n}/\Q}(\gamma_n)=2^d+1$, so by Proposition~\ref{lemma:norm} (used with $c=3/2$), $\gamma_n$ is indecomposable in $\co_{2^n}$. Let $L$ be a totally real number field such that $\gamma_n \in L$ and $[L:\Q]=d'$. Then, from Proposition~\ref{prop:gamma}$(ii)$, $\Tr_{L/\Q}(\gamma_m)=3d[L:K_{2^n}]=3d'$. In particular, if $\gamma_n$ is indecomposable, then from part a) of Theorem \ref{thm:1} we have that either $\gamma_n=\beta+1$ or $\gamma_n=2\phi+2$, where $\beta \in L$. However, that would imply that $\beta,\phi\in K_{2^n}$, a contradiction with the indecomposability of $\gamma_n$ in $K_{2^n}$. 
\end{proof}

\begin{lemma}\label{lemma:3}
    $3$ is inert in $K_{2^n}$. In particular, there does not exist $\beta\in\co_{2^n}$ with $\Nr_{K_{2^n}/\Q}(\beta)= 3$.
\end{lemma}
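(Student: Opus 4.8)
The plan is to read off the splitting of $3$ from the Galois theory of the abelian extension $K_{2^n}/\Q$. Recall $\mathrm{Gal}(\Q(\zeta_{2^n})/\Q)\cong(\Z/2^n\Z)^\times$, which for $n\ge 3$ is $\langle -1\rangle\times\langle 5\rangle\cong\Z/2\Z\times\Z/2^{n-2}\Z$. Since $K_{2^n}$ is the fixed field of complex conjugation $\zeta_{2^n}\mapsto\zeta_{2^n}^{-1}$ (the element $-1$), we have $\mathrm{Gal}(K_{2^n}/\Q)\cong(\Z/2^n\Z)^\times/\langle -1\rangle$, which is \emph{cyclic} of order $d=2^{n-2}$. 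Because $3\nmid 2^n$, the prime $3$ is unramified in $\Q(\zeta_{2^n})$, and its Frobenius in $\mathrm{Gal}(K_{2^n}/\Q)$ is the class of $3$ modulo $\langle-1\rangle$. Hence $3$ is inert in $K_{2^n}$ precisely when this class generates the whole group, i.e.\ when $3$ has order $2^{n-2}$ in $(\Z/2^n\Z)^\times/\langle-1\rangle$, and that is what I would reduce the proof to.

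I would then establish two facts about $(\Z/2^n\Z)^\times$: (i) $3$ has multiplicative order exactly $2^{n-2}$ modulo $2^n$ --- for instance, the lifting-the-exponent identity gives $v_2(3^{2^k}-1)=k+2$ for $k\ge 1$, so $2^n\mid 3^{2^k}-1$ iff $k\ge n-2$; and (ii) $-1\notin\langle 3\rangle$, since $3^k\equiv 1$ or $3\pmod 8$ for every $k$, hence $3^k\not\equiv -1\pmod{2^n}$ once $n\ge 3$. From (ii), $\langle 3\rangle\cap\langle-1\rangle=\{1\}$, so by (i) the image of $3$ in the quotient $(\Z/2^n\Z)^\times/\langle-1\rangle$ still has order $2^{n-2}$; being an element of maximal order in a cyclic group of order $2^{n-2}$, it generates the group, and $3$ is inert. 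The final assertion is then immediate: once $3\co_{2^n}$ is a prime ideal, any $\beta\in\co_{2^n}$ with $3\mid\Nr_{K_{2^n}/\Q}(\beta)$ lies in $3\co_{2^n}$, say $\beta=3\gamma$ with $\gamma\in\co_{2^n}$, whence $\Nr_{K_{2^n}/\Q}(\beta)=3^{d}\,\Nr_{K_{2^n}/\Q}(\gamma)$ is divisible by $3^{d}$; as $d=2^{n-2}\ge 2$, this is incompatible with $\Nr_{K_{2^n}/\Q}(\beta)=\pm 3$.

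The only genuinely delicate point is the bookkeeping in $(\Z/2^n\Z)^\times$, in particular verifying $-1\notin\langle 3\rangle$ and pinning down the order of $3$; everything else is formal. As an alternative that avoids Galois groups, one can note that since $\co_{2^n}=\Z[\omega]$, inertness of $3$ is equivalent to irreducibility modulo $3$ of the minimal polynomial of $\omega=\zeta_{2^n}+\zeta_{2^n}^{-1}$, and the same facts (i)--(ii), applied to a primitive $2^n$-th root of unity $\bar\zeta\in\overline{\mathbb{F}_3}$, show that $\mathbb{F}_3(\bar\zeta+\bar\zeta^{-1})=\mathbb{F}_{3^{2^{n-2}}}$, which yields the conclusion by a degree count.
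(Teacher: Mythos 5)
Your proof is correct and follows essentially the same route as the paper: both reduce inertness of $3$ to showing that $3$ has order $2^{n-2}$ in $(\Z/2^n\Z)^\times$ and that $-1\notin\langle 3\rangle$ (the paper invokes the Marcus exercise characterizing the inertial degree as the least $f$ with $3^f\equiv\pm1\pmod{2^n}$, which is the same Frobenius computation you carry out). The only difference is cosmetic: you pin down the order via the lifting-the-exponent valuation $v_2(3^{2^k}-1)=k+2$, where the paper uses an induction on $n$, and your mod-$8$ observation for $-1\notin\langle3\rangle$ is a slightly cleaner version of the paper's parity argument.
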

\begin{proof} 
	By \cite[Chapter~4, Exercise~12]{Marcus}, we have that the inertial degree of 3 in $K_{2^n}$ equals the smallest positive integer $f$ satisfying $3^f\equiv \pm 1\pmod{2^n}$ (for, in the notation of the exercise, we have $m=2^n$ and the corresponding subgroup $H=\{\pm 1\}$). To show that 3 is inert, we therefore want to show that $f=2^{n-2}$.
	
	First, note that $3^j\not\equiv -1\pmod{2^n}$ for every even $j$, as already $3^j\equiv (-1)^j=1\not\equiv -1\pmod{4}$. Thus, if $3^f\equiv - 1\pmod{2^n}$, then $f$ is odd. But then $2f$ is the order of $3$ in $\Z_{2^n}^\times$, and so $2f$ is a power of 2. This can happen for odd $f$ only when $f=1$, which is, however, impossible. 
	
	Thus, $f$ is the smallest positive integer such that $3^f\equiv 1\pmod{2^n}$. We now prove by induction on $n$ that $f=2^{n-2}$. 
	We directly verify this for $n=3,4.$  
	
	Now, assume that it holds for $n\ge 4$ and assume that $3^j\equiv 1 \pmod{2^n}$ for some $j$. Then 
	$$
		(3^{j/2}-1)(3^{j/2}+1)\equiv 0\pmod{2^n}.
	$$ 
	First, note that $j$ is a power of two, and from case $n=4$, we have $j\ge 4$. 
	Therefore, $2\mid (3^{j/2}+1)$, but $4\nmid(3^{j/2}+1) $ as we already saw. By the induction hypothesis, if $j=2^{n-2},$ then $2^{n-1}\mid (3^{j/2}-1),$ and it follows that $f\le 2^{n-2}.$ But if $j<2^{n-2}$, we have $2^{n-1}\nmid (3^{j/2}-1)$ again by the induction hypothesis. The result follows as $4\nmid (3^{j/2}+1).$ 

Concerning the ``In particular'' part, if there was such $\beta$, then $\beta\mid 3$. But $3\co_{2^n}$ is prime and $\Nr_{K_{2^n}/\Q}(3)= 3^{2^{n-2}}$ with $n\geq 3$, so this is impossible. 
\end{proof}

\begin{prop}\label{prop:alpha*gamma}
$\alpha_n\gamma_n$ is indecomposable in $K_{2^n}$.
\end{prop}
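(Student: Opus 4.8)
The plan is to argue by contradiction: suppose $\alpha_n\gamma_n=\mu+\nu$ with $\mu,\nu\in\co_{2^n}^+$ (both necessarily nonzero), and play the factor $\alpha_n$ off against the factor $\gamma_n$. First I would record the structure of the prime above $2$: since $2$ is totally ramified in $K_{2^n}$ (as in $\Q(\zeta_{2^n})$) and $\Nr(\alpha_n)=2$ by Proposition~\ref{prop:alpha2n}, the principal ideal $\mathfrak p:=\alpha_n\co_{2^n}$ is the unique prime of $\co_{2^n}$ above $2$, with $\co_{2^n}/\mathfrak p\cong\mathbb{F}_2$. From $\alpha_n\mid\alpha_n\gamma_n=\mu+\nu$ and $-1\equiv 1\pmod{\mathfrak p}$ we get $\mu\equiv\nu\pmod{\mathfrak p}$, so either $\mathfrak p$ divides both $\mu$ and $\nu$, or it divides neither.

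If $\mathfrak p\mid\mu$ and $\mathfrak p\mid\nu$, then (as $\mathfrak p$ is principal) $\mu/\alpha_n$ and $\nu/\alpha_n$ lie in $\co_{2^n}$; they are totally positive and nonzero because $\mu,\nu,\alpha_n\succ 0$, and they satisfy $\mu/\alpha_n+\nu/\alpha_n=\gamma_n$. This contradicts the indecomposability of $\gamma_n$ from Proposition~\ref{cor:gamma}.

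The remaining case, $\mathfrak p\nmid\mu$ and $\mathfrak p\nmid\nu$, is the crux. Here $\Nr(\mu)$ and $\Nr(\nu)$ are odd, and by Lemma~\ref{lemma:norm_bound} together with Propositions~\ref{prop:alpha2n} and~\ref{prop:gamma},
\[
\Nr(\mu)^{1/d}+\Nr(\nu)^{1/d}\le\Nr(\alpha_n\gamma_n)^{1/d}=\bigl(2(2^d+1)\bigr)^{1/d}=(2^{d+1}+2)^{1/d}.
\]
If both norms were at least $3$, this would force $2\cdot 3^{1/d}\le(2^{d+1}+2)^{1/d}$, i.e.\ $3\cdot 2^d\le 2^{d+1}+2$, which fails for $d=2^{n-2}\ge 4$; hence, after relabelling, $\Nr(\nu)=1$. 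Then $\Nr(\mu)^{1/d}\le(2^{d+1}+2)^{1/d}-1$, and since $d\ge 4$ one checks that the $d$-th power of the right-hand side is $<5$ (equivalently $2^{d+1}+2<(1+5^{1/d})^d$, which holds for $d=4$ directly and for $d\ge 8$ by a crude lower bound on $(1+5^{1/d})^d$). Thus $\Nr(\mu)$ is an odd positive integer $\le 4$, so $\Nr(\mu)\in\{1,3\}$; but $\Nr(\mu)=3$ is impossible by Lemma~\ref{lemma:3}, so $\Nr(\mu)=1$. Hence $\mu$ and $\nu$ are totally positive units, and by Proposition~\ref{prop:2} they are squares of units, say $\mu=\varepsilon^2$, $\nu=\delta^2$, so $\alpha_n\gamma_n=\varepsilon^2+\delta^2$ is a sum of integral squares. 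But expanding, and using $\omega^2=2+\omega_2$, $\omega\,\omega_{d-1}=\omega_d+\omega_{d-2}=\omega_{d-2}$ and $\omega_d=0$,
\[
\alpha_n\gamma_n=(\omega+2)(\omega+\omega_{d-1}+3)=8+5\omega+\omega_2+2\omega_{d-1}+\omega_{d-2},
\]
whose coefficient of $\omega=\omega_1$ (an odd index) equals $5$, which is odd. By Lemma~\ref{lemma:sqr}(ii) this prevents $\alpha_n\gamma_n$ from being a sum of squares in $\co_{2^n}$ — the desired contradiction. Therefore $\alpha_n\gamma_n$ is indecomposable.

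I expect the main obstacle to be precisely this last case: converting the essentially trivial norm bound $\Nr(\mu)^{1/d}+\Nr(\nu)^{1/d}\le(2^{d+1}+2)^{1/d}\approx 2$ into the strong conclusion that $\mu$ and $\nu$ are units. This succeeds only because $2^{d+1}+2$ is barely larger than $2^d$, so after peeling off one unit the other norm is squeezed below $5$, and because the only remaining odd candidate, $3$, is blocked by the inertness of $3$ in $K_{2^n}$ (Lemma~\ref{lemma:3}). Everything else — the ramification of $2$, the division trick in the first case, and the explicit expansion of $\alpha_n\gamma_n$ feeding into Lemma~\ref{lemma:sqr} — is routine.
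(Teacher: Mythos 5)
Your proof is correct and follows essentially the same route as the paper's: both arguments squeeze the norms of the two summands via Lemma~\ref{lemma:norm_bound}, exclude norm $3$ using Lemma~\ref{lemma:3}, reduce one branch to the indecomposability of $\gamma_n$ and the other to the parity obstruction of Lemma~\ref{lemma:sqr}(ii) applied to the expansion $\alpha_n\gamma_n=8+5\omega_1+\omega_2+\omega_{d-2}+2\omega_{d-1}$. The only difference is organizational: you split cases by divisibility by the prime $(\alpha_n)$ above $2$ (which conveniently makes the remaining norms odd), whereas the paper splits directly according to the norm values of the summands.
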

	\begin{proof}
	    Let us assume that $\alpha_n\gamma_n$ decomposes as $\beta_1+\cdots+\beta_k$ with $k\geq 2$. Recall that $\Nr_{K_{2^n}/\Q}(\alpha_n\gamma_n)$ $=2(2^d+1)$, and so looking at the norm bound from Lemma \ref{lemma:norm_bound}, we can conclude that $\Nr_{K_{2^n}/\Q}(\beta_i)\le 3$. As there is no element of norm 3 by Lemma \ref{lemma:3}, we have $\Nr_{K_{2^n}/\Q}(\beta_i)\le 2$. If one $\beta_i$ has a norm 2, then by Proposition \ref{prop:gamma}, we find that it is a (totally positive) unit multiple of $\alpha_n$. Hence
\[\alpha_n\gamma_n-\beta_i=\alpha_n(\gamma_n- \varepsilon^2)\succ 0,\]
where $ \varepsilon$ is a unit (using that all totally positive units are squares by the same proposition). But since $\alpha_n\succ 0$ and $\gamma_n$ are indecomposable, this is impossible. Therefore, $\Nr_{K_{2^n}/\Q}(\beta_i)=1$ for all $i$, and in particular, if $\alpha_n\gamma_n$ is decomposable, then it is represented by the sum of squares, as all units are squares. We have
	    \begin{align*}
	       \alpha_n\gamma_n&=(\omega+2)(\omega+\omega_{d-1}+3)\\
	       &=\omega^2+\omega\omega_{d-1}+5\omega+2\omega_{d-1}+6\\
	       &=5\omega_1+\omega_2+\omega_{d-2}+2\omega_{d-1}+8.
	    \end{align*}
	    However, from Lemma \ref{lemma:sqr}$(ii)$, $\alpha_n\gamma_n$ is not a sum of squares, as it contains the term $5\omega_1$ with an odd coefficient 5.
	    \end{proof}

	\begin{theorem}\label{thm:2-K}
    For $n>3$, there are no universal classical ternary quadratic forms over $\co_{2^n}$.
	\end{theorem}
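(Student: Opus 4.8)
The plan is to argue by contradiction. Suppose $Q$ is a universal classical ternary quadratic form over $\co_{2^n}$ with $n>3$. Since $Q$ represents the unit $1$, Lemma~\ref{lemma:unit} yields $Q\cong\langle 1\rangle\perp Q'$ with $Q'$ a classical, totally positive definite binary form. As $Q$ is universal, it represents each of the indecomposable integers $\alpha_n$, $\gamma_n$ and $\alpha_n\gamma_n$ (Propositions~\ref{cor:alpha}, \ref{cor:gamma}, \ref{prop:alpha*gamma}); for $n>3$ their norms $2$, $2^d+1$ and $2(2^d+1)$ are not perfect squares, so none of the three is a square in $\co_{2^n}$. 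Writing such a $\xi$ as $\xi=x^2+Q'(v)$, the indecomposability of $\xi$ together with the total positivity of $x^2$ and $Q'(v)$ forces $x=0$ or $v=0$, and $v=0$ is excluded since $\xi$ is not a square; hence $Q'$ represents $\alpha_n$, $\gamma_n$ and $\alpha_n\gamma_n$. Finally, since $\Nr(\alpha_n)=2$ is prime, the vector representing $\alpha_n$ by $Q'$ is primitive, so after a change of basis $Q'\cong\begin{pmatrix}\alpha_n & b\\ b & c\end{pmatrix}$ with $b,c\in\co_{2^n}$; put $\delta\eqdef\det Q'=\alpha_n c-b^2\succ 0$.

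The next step is to extract arithmetic constraints, using $\alpha_n\mid 2$ (which follows from $\Nr(\alpha_n)=2$). Reducing $\gamma_n=\alpha_n v_1^2+2bv_1v_2+cv_2^2$ modulo $\alpha_n$ gives $\gamma_n\equiv cv_2^2$, whence $\alpha_n\nmid c$, for otherwise $\alpha_n\mid\gamma_n$, contradicting that $\Nr(\gamma_n)=2^d+1$ is odd. Reducing $\alpha_n\gamma_n=\alpha_n v_1^2+2bv_1v_2+cv_2^2$ modulo $\alpha_n$ then forces $\alpha_n\mid v_2$; writing $v_2=\alpha_n w$ and completing the square gives $\gamma_n=(v_1+bw)^2+\delta w^2$. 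Since $\gamma_n$ is indecomposable and $\delta w^2$ is totally positive for $w\ne 0$, we get $v_1+bw=0$; and $w=0$ would make $\gamma_n$ a square, so $\gamma_n=\delta w^2$ with $w\ne 0$. Therefore $\Nr(\delta)\mid 2^d+1$, $\alpha_n\nmid b$ (else $\alpha_n\mid\delta\mid\gamma_n$), and $\Nr(\delta)\ne 1$ — otherwise $\delta$ is a totally positive unit, hence a square (Proposition~\ref{prop:2}), making $\gamma_n$ a square. In particular $\Nr(\delta)\ge 3$. An analogous computation, completing the square in the other variable of $Q'(v)=\gamma_n$, produces the supplementary identity $c\gamma_n=s^2+\delta v^2$; combined with $\gamma_n=\delta w^2$ this forces $\delta\mid s^2$ and further constrains $c$.

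To conclude I would exhibit a totally positive integer that $Q=\langle 1\rangle\perp\begin{pmatrix}\alpha_n & b\\ b & c\end{pmatrix}$ cannot represent, contradicting universality. The mechanism is the one already used: for a putative representation $\eta=x^2+Q'(v)$, one first applies total positivity and the norm inequality (Lemma~\ref{lemma:norm_bound}) to bound $\Nr(x)$ and $\Nr(Q'(v))$, so that $x$ is a unit and $\Nr(Q'(v))$ is small; the absence of elements of norm $3$ (Lemma~\ref{lemma:3}) and the classification of norm-$\le 2$ elements (Proposition~\ref{prop:2}) then leave only a handful of shapes, which the constraints $\alpha_n\nmid c$, $\Nr(\delta)\ge 3$ and $\gamma_n=\delta w^2$ (together with reduction modulo $\alpha_n$ and completing the square) should eliminate one by one.

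The step I expect to be the main obstacle is this last one, because the block $\langle\alpha_n\rangle$ inside $Q'$ is remarkably efficient: for example it already represents $2\alpha_n=\alpha_n(\zeta_8+\zeta_8^{-1})^2$ (note $\sqrt 2=\zeta_8+\zeta_8^{-1}\in\co_{2^n}$), so $2\alpha_n$ itself is not an obstruction, and $\langle 1\rangle$ disposes of every perfect square; one therefore has to pin down an element for which every decomposition $x^2+Q'(v)$ is simultaneously blocked, which appears to require playing $\alpha_n\nmid c$, $\Nr(\delta)\ge 3$ and $\gamma_n=\delta w^2$ against one another rather than relying on any single ``bad'' integer in isolation. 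A secondary, routine point is the reduction of $Q'$ to the shape $\begin{pmatrix}\alpha_n & b\\ b & c\end{pmatrix}$, i.e.\ that the primitive vector representing $\alpha_n$ extends to a basis of $\co_{2^n}^2$; this is handled in the usual way.
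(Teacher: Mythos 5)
Your setup coincides with the paper's (split off $\langle 1\rangle$, force $Q'$ to represent the three nonsquare indecomposables $\alpha_n,\gamma_n,\alpha_n\gamma_n$, and normalize $Q'\cong\begin{pmatrix}\alpha_n & b\\ b & c\end{pmatrix}$ via primitivity of the vector representing $\alpha_n$), and your middle section is a legitimate alternative to the paper's: where the paper decomposes the discriminant $\delta=\alpha_n c-b^2$ into indecomposables and shows each summand is a square or $\alpha_n$ times a square, you exploit that $(\alpha_n)$ is the prime above $2$ and reduce the representations modulo $\alpha_n$. Your deductions $\alpha_n\nmid c$, $\alpha_n\mid u_2$ for the vector representing $\alpha_n\gamma_n$, and hence $\gamma_n=\delta w^2$ with $w\neq 0$, are all correct.

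The genuine gap is that you never close the argument: the final paragraph replaces a proof by a programme (``exhibit a totally positive integer that $Q$ cannot represent'') which you yourself flag as the main obstacle and do not carry out, and the auxiliary identity $c\gamma_n=s^2+\delta v^2$ does not by itself yield a contradiction since $c\gamma_n$ need not be indecomposable. What you are missing is that the constraints you already have suffice, provided you apply your own completing-the-square trick once more to the representation of $\gamma_n$ (multiplying by $\alpha_n$, not by $c$). Writing $\gamma_n=\alpha_n v_1^2+2bv_1v_2+cv_2^2$ and multiplying by $\alpha_n$ gives $\alpha_n\gamma_n=(\alpha_n v_1+bv_2)^2+\delta v_2^2$; since $\alpha_n\gamma_n$ is indecomposable and $v_2\neq 0$ (your reduction mod $\alpha_n$ shows $\alpha_n\nmid v_2$), the square term vanishes and $\alpha_n\gamma_n=\delta v_2^2$. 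Combining with your $\gamma_n=\delta w^2$ yields $\alpha_n w^2=v_2^2$, so $2\Nr(w)^2=\Nr(v_2)^2$, which is impossible. This is essentially how the paper concludes as well (it derives $(\alpha_n u+\beta v)^2=0$ and $v^2\kappa_1=\alpha_n\gamma_n$ with $\kappa_1$ a square or $\alpha_n$ times a square, forcing $\alpha_n\gamma_n$ or $\gamma_n$ to be a square). As written, your proposal is an incomplete proof with a correct but unfinished strategy.
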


	\begin{proof}
	    Let us assume for contradiction that $Q$ is a universal ternary quadratic form over $\co_{2^n}$. Lemma \ref{lemma:unit} gives us 
	    \begin{equation*}
	        Q\cong \langle 1 \rangle\perp Q', 
	    \end{equation*}
	where $Q'$ is a binary quadratic form. By Propositions \ref{cor:alpha}, \ref{cor:gamma}, and \ref{prop:alpha*gamma}, $\alpha_n,\gamma_n$, and $\alpha_n\gamma_n$ are indecomposable in $\co_{2^n}$.  Furthermore, the norms of these elements are nonsquare (Propositions \ref{prop:alpha2n}$(iii)$ and \ref{prop:gamma}$(iii)$), and so also $\alpha_n,\gamma_n$ and $\alpha_n\gamma_n$ are not squares in $\co_{2^n}$. This implies that $Q'$ has to represent them. 
	
	Thus, there exists $v\in \co_{2^n}^2$ such that $Q'(v)=\alpha_n$. In particular, since the norm of $\alpha_n$ is 2, $v$ is primitive; otherwise, if $v=\delta w$, we would have $\alpha_n=\delta^2Q(w)$. Applying Theorem in \cite{GMR} we can construct an invertible matrix $X$ with the first row corresponding to $v$, so that $XMX^t=\begin{pmatrix} Q'(v) & \beta\\ \beta  & \gamma \end{pmatrix},$ where $M$ is the Gram matrix of $Q',$ and $\beta,\gamma \in \co_{2^n}$. Therefore, without loss of generality, assume that 
	\[Q'\cong \begin{pmatrix} \alpha_n & \beta\\ \beta & \gamma \end{pmatrix}.\]
	First, we examine the decomposition of the discriminant of $Q'$. Let $\alpha_n\gamma-\beta^2=\sum^k_{i=1}m_i \kappa_i$, where $m_i\in \Z^+$ and $ \kappa_i$ are distinct indecomposable integers. We claim that each $ \kappa_i$ is either a square or $\alpha_n$ times a square. If there exists $i$, say $i=1$, such that $\kappa_1$ is not a square, then $Q'$ represents $ \kappa_1$, that is,
	\begin{equation}\label{eq:eps}
	\alpha_nx^2+2\beta xy+\gamma y^2= \kappa_1.
	\end{equation}
	By multiplying by $\alpha_n$ and completing the square, we get the following:
	\begin{align*}
	    (\alpha_n x+\beta y)^2+(\alpha_n\gamma -\beta^2)y^2=&\alpha_n \kappa_1\\
	    (\alpha_n x+\beta y)^2+y^2\sum^k_{i=1}m_i \kappa_i=&\alpha_n \kappa_1\\
	     (\alpha_n x+\beta y)^2+y^2\sum^k_{i=2}m_i \kappa_i=&(\alpha_n-m_1y^2) \kappa_1.
	\end{align*}
	Given that the left-hand side is $\succeq 0$, it implies that $\alpha_n-m_1y^2\succeq 0$. Hence $y=0$, as $\alpha_n$ is indecomposable and $N_{K^{2^n}/\Q}(\alpha_n)=2$, and equation (\ref{eq:eps}) gives us that $\alpha_nx^2= \kappa_1$, proving the claim. Therefore, $\alpha_n\gamma-\beta^2=\sum x_i^2+\alpha_n\sum y_i^2$, $x_i,y_i\in \co_{2^n}$.
	
	Completing the square and recalling that $Q'$ represents $\gamma_n$, we have that
	\begin{equation*}\label{eq:2}
	    (\alpha_n u+\beta v)^2+(\alpha_n\gamma -\beta^2)v^2=\alpha_n\gamma_n.
	\end{equation*}
	Recall that $\alpha_n\gamma_n$ is indecomposable and not square. Therefore,
	\[ (\alpha_n u+\beta v)^2=0,\]
	and
	\[v^2\sum^k_{i=1}m_i \kappa_i=\alpha_n\gamma_n,\]
	where we must have $k=1$ and $m_1=1$. 
	
	However, we already saw that $ \kappa_1=\lambda^2$ or $\alpha_n\lambda^2$ for some $\lambda$. Thus, $\alpha_n\gamma_n=v^2 \kappa_1=v^2\lambda^2$ is a square, or $v^2 \kappa_1=\alpha_n\lambda^2v^2=\alpha_n\gamma_n$, thus $\gamma_n$ is a square. In both cases, we get a contradiction.
	\end{proof}
	
\section{$p$-th roots of unity}\label{s:7} Throughout this section, let $p\geq 5$ be a prime number, and let $K_p=\Q(\zeta_{p}+\zeta_{p}^{-1})$ be the maximal totally real subfield of $p$-th cyclotomic field. We have that $[K_p:\Q]=d=(p-1)/2$, and we denote by $\co_p$ the ring of integers of $K_p$.
 
 Recall that $\omega=\zeta_{p}+\zeta_{p}^{-1}$, $\omega_j=\zeta_{p}^j+\zeta_{p}^{-j}$; then $\omega=\omega_1,\dots,\omega_{d}$ form an integral basis for $\co_p$.
 
 Let us start by establishing a variant of Proposition \ref{prop:alpha2n} and Lemma \ref{lemma:sqr}. These results are again due to Scharlau \cite[part 5B]{Sc2} and we include their proofs for completeness (as the thesis \cite{Sc2} is not easily available).
 
 \begin{prop}\label{prop:alphap}
 	Let $\alpha_p=
 		2 - \omega.$
 	
 	We have:
 	\begin{enumerate}[label=(\roman*)]
 		\item $\alpha_p\succ 0,$
 		\item $\Tr_{K_p/\Q}(\alpha_p)=
 			2d+1,$ 
 		\item $\Nr_{K_p/\Q}(\alpha_p)=
 			p.$
 		
 		\item For $\beta=\sum_{j=0}^{d-1} a_j\omega_j\in \co_p$ the following holds
 		\[\Tr_{K_p/\Q}(\beta^2)=
 			\sum_{j=0}^{d-1} a_j^2 +2\sum_{0\leq j<i\leq d-1} (a_i-a_j)^2	.\]
 	\end{enumerate}
 \end{prop}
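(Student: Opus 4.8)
The plan is to mirror the structure of the $2^n$ case (Propositions \ref{prop:alpha2n} and Lemma \ref{lemma:sqr}), using the explicit minimal polynomial of $\zeta_p$ and the multiplication rules for the $\omega_j$. I would organize the proof around the single computational engine $\omega_j\omega_l = \omega_{j+l}+\omega_{j-l}$ (valid for all integers, with $\omega_0=2$ — careful: here the convention stated in Section \ref{s:2} is $\omega_0=1$, so I must recheck whether $\omega_j^2 = 2+\omega_{2j}$ or involves the $\omega_0=1$ normalization; I will use $\zeta_p^j+\zeta_p^{-j}$ directly to avoid a sign/constant slip) together with the single trace fact $\Tr_{K_p/\Q}(\omega_j)=-1$ for $1\le j\le d$, which is what distinguishes the odd-prime case from the power-of-two case (where the trace was $0$).

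\textbf{Part (i).} This follows immediately from $\house{\omega}<2$, which is recorded in Section \ref{s:2}; hence $\sigma_i(\alpha_p)=2-\sigma_i(\omega)>0$ for every embedding.

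\textbf{Part (ii).} Since $\Tr_{K_p/\Q}(1)=d$ and $\Tr_{K_p/\Q}(\omega)=\Tr_{K_p/\Q}(\omega_1)$, I need $\Tr_{K_p/\Q}(\omega_1)$. Because $\omega_1,\dots,\omega_d$ is an integral basis and $\omega_1+\cdots+\omega_d=-1$ (stated in Section \ref{s:2}), and because the Galois group permutes $\{\omega_1,\dots,\omega_d\}$ transitively, each $\omega_j$ has the same trace, equal to $-1$. Thus $\Tr_{K_p/\Q}(\alpha_p)=2d-(-1)=2d+1$.

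\textbf{Part (iii).} Substituting $x=-1$ into $\Phi_p(x)=x^{p-1}+\cdots+1$ gives $\Phi_p(-1)=1$, but more usefully, $\Nr_{\Q(\zeta_p)/\Q}(1-\zeta_p)=\Phi_p(1)=p$. The relative norm $\Nr_{\Q(\zeta_p)/K_p}(1-\zeta_p)=(1-\zeta_p)(1-\zeta_p^{-1})=2-\omega=\alpha_p$, so by transitivity of the norm $\Nr_{K_p/\Q}(\alpha_p)=p$. (I should double-check the sign: $(1-\zeta_p)(1-\bar\zeta_p)=2-(\zeta_p+\zeta_p^{-1})=2-\omega$, confirming $\alpha_p$ exactly.)

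\textbf{Part (iv).} Expanding $\beta^2=\big(\sum_j a_j\omega_j\big)^2 = \sum_j a_j^2\omega_j^2 + 2\sum_{j<i}a_ia_j\omega_i\omega_j$ and using $\omega_j^2=2+\omega_{2j}$ (with the convention $\omega_0 = 2$ when a collision $i=j$ forces index $0$; again I will track this against the paper's $\omega_0=1$ convention and the range $0\le j\le d-1$ stated), $\omega_i\omega_j=\omega_{i+j}+\omega_{i-j}$. Then apply $\Tr_{K_p/\Q}(\omega_k)=-1$ for $k\not\equiv 0$ and $\Tr_{K_p/\Q}(\omega_0)=d$ (or the appropriate constant): the cross terms contribute $-1$ for each index that reduces to a nonzero residue and contribute a different constant when an index collapses to $0$. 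Collecting, the diagonal part $\sum a_j^2(2+\omega_{2j})$ and the off-diagonal $2\sum_{j<i}a_ia_j(\omega_{i+j}+\omega_{i-j})$ produce, after taking traces, the claimed $\sum a_j^2 + 2\sum_{j<i}(a_i-a_j)^2$. The key algebraic identity to verify is that $\sum_j a_j^2\cdot(\text{something}) + 2\sum_{j<i}a_ia_j\cdot(\text{count of index collapses})$ reorganizes into the perfect-square-difference form; expanding $\sum_{j<i}(a_i-a_j)^2=(d-1)\sum a_j^2 - 2\sum_{j<i}a_ia_j$ is the bookkeeping that makes this work.

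\textbf{Main obstacle.} The one delicate point is part (iv): getting the constants exactly right when indices $i\pm j$ wrap around modulo $p$ into the range $\{1,\dots,d\}$ (possibly via $\omega_{-k}=\omega_k$ and $\omega_{p-k}=\omega_k$), and in particular counting precisely how many of the $\binom{d}{2}+d$ products land on the ``$\omega_0$''-type term versus a generic $\omega_k$. Because $\Tr(\omega_k)=-1$ rather than $0$, every product contributes, so one cannot be cavalier as in the $2^n$ case; I would set up the count carefully, perhaps by first computing $\Tr_{K_p/\Q}(\omega_i\omega_j)$ directly as $\Tr_{\Q(\zeta_p)/\Q}\big((\zeta^i+\zeta^{-i})(\zeta^j+\zeta^{-j})\big)/2$ and using $\Tr_{\Q(\zeta_p)/\Q}(\zeta^k)=-1$ for $p\nmid k$ and $=p-1$ for $p\mid k$, which turns the whole thing into elementary modular arithmetic on when $i\pm j\equiv 0$.
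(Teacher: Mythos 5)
Your proposal is correct, and for parts (i)--(iii) it is essentially the paper's own argument: the house bound $\house{\omega}<2$ for (i); the value $\Tr_{K_p/\Q}(\omega_j)=-1$ (which the paper gets from $\Tr_{\Q(\zeta_p)/\Q}(\zeta_p^j)=-1$ and the tower, and you get from $\omega_1+\dots+\omega_d=-1$ plus Galois transitivity --- the same fact) for (ii); and the relative norm $\Nr_{\Q(\zeta_p)/K_p}(1-\zeta_p)=2-\omega$ combined with $\Nr_{\Q(\zeta_p)/\Q}(1-\zeta_p)=p$ for (iii), where your remark that the relevant evaluation of the minimal polynomial is at $1$ rather than $-1$ is right. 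The genuine divergence is in part (iv): the paper disposes of it by citing \cite[Proposition~4.1$(a)$]{BN} for the Gram matrix of the trace form on the chosen basis, whereas you recompute it from scratch via $\omega_i\omega_j=\omega_{i+j}+\omega_{i-j}$ and $\Tr_{\Q(\zeta_p)/\Q}(\zeta_p^k)=-1$ for $p\nmid k$. Your route is longer but self-contained, and it has a concrete payoff: the bookkeeping you insist on forces the indexing convention to be fixed, and indeed the identity $\Tr_{K_p/\Q}(\beta^2)=\sum_j a_j^2+2\sum_{j<i}(a_i-a_j)^2$ holds for the basis $\omega_1,\dots,\omega_d$ (where $\Tr(\omega_j^2)=2d-1$, $\Tr(\omega_i\omega_j)=-2$ for $i\neq j$, so the Gram matrix is $pI_d-2J_d$ and the identity is a two-line check), but \emph{not} for the basis $\omega_0=1,\omega_1,\dots,\omega_{d-1}$ that the summation range in the statement literally suggests, since $\Tr(\omega_0^2)=d\neq 2d-1$ and $\Tr(\omega_0\omega_j)=-1\neq-2$. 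The range $\sum_{j=0}^{d-1}$ is a slip carried over from the $2^n$ section (where cross terms have trace $0$ and the $\omega_0$-basis is harmless); Lemma~\ref{lemma:ptrace} uses the formula with indices $1,\dots,d$, which is the correct reading. With that convention your remaining worry evaporates: for $1\le j<i\le d$ one has $3\le i+j\le p-2$ and $1\le i-j\le d-1$, so no product index ever collapses to $0\pmod p$, and your sketch completes to a full proof.
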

 \begin{proof}
 	
 	$(i)$ follows from the fact that $\house{\omega}<2$. 
 	
 	$(ii)$ The minimal polynomial of $\zeta_p$ is $x^{p-1}+x^{p-2}+\dots+1$. Therefore, for $j=1,\dots, d-1$ we have $\Tr_{\Q(\zeta_p)/\Q}(\zeta_p^j)=
 		-1.$
 		
 	As $\omega=\zeta_p+\zeta_p^{-1}\in K_p$, we have \[-2=\Tr_{\Q(\zeta_p)/\Q}(\omega_j)=[\Q(\zeta_p):K_p]\Tr_{K_p/\Q}(\omega_j)=2\Tr_{K_p/\Q}(\omega_j).\]
 	The trace of $\omega$ is thus $-1$, which immediately implies the value of the trace of $\alpha_p$. 
 	
 	$(iii)$	By substituting $-1$ in the minimal polynomial of $\zeta_p$, we get that the norm of  $1-\zeta_p$ is $p$. As the relative norm $\Nr_{\Q(\zeta_p)/K_p}(1- \zeta_p)=2- \omega$,  part $(iii)$ follows.

 	$(iv)$ follows from  \cite[Proposition 4.1$(a)$]{BN} that $(\Tr_{K_p/\Q}(xy))\cong pI_d-2J_d$, where $I_d$ is an identity matrix of dimension $d$, and $J_d$ is a $d\times d$ matrix of all ones. Due to our choice of basis, they are equal (as opposed to just equivalent), and the proof follows trivially.
 \end{proof}
 
\begin{lemma}[{\cite[5B, Lemma 8]{Sc2}}]\label{lemma:ptrace}
	Assume that $p\ge 29$ and that $\beta\in\co_p$ is such that $\Tr_{K_p/\Q}(\beta^2)<4d+2$. Then the only possibilities for $\pm\beta$ and the trace are as follows (with $i,h=1,\dots,d,i\neq h$):

\begin{center}	
\begin{tabular}{|c|c|}
	\hline
	$\pm\beta$ & $\Tr_{K_p/\Q}(\beta^2)$ \\
	\hline
	$0$ & $0$ \\
	$1$ & $d$ \\
	$\omega_i$ & $2d-1$ \\
	$\omega_i+1$ & $3d-3$ \\
	$\omega_i-1$ & $3d+1$ \\
	$\omega_i+\omega_h$ & $4d-6$ \\
	$2$ & $4d$ \\
	\hline
\end{tabular}
\end{center}
\end{lemma}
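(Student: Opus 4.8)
The plan is to turn this into an enumeration of short vectors for the explicit positive definite quadratic form that computes the trace of a square. Write $\beta=\sum_{i=1}^{d}b_i\omega_i$ in the integral basis $\omega_1,\dots,\omega_d$, and put $Q=\sum_i b_i^2$ and $s=\sum_i b_i$. By Proposition~\ref{prop:alphap}(iv) (and $p=2d+1$) one has $\Tr_{K_p/\Q}(\beta^2)=(2d+1)Q-2s^2$; moreover $\Tr_{K_p/\Q}(\beta)=-s$, and crucially $|s|\le Q$ since $|b_i|\le b_i^2$ for every integer $b_i$. The hypothesis $\Tr_{K_p/\Q}(\beta^2)<4d+2$ thus reads $(2d+1)Q-2s^2<4d+2$, equivalently $s^2>\tfrac{(2d+1)(Q-2)}{2}$.

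First I would pass to a reduced representative: among all $m\in\Z$ pick $m_0$ minimising $\Tr_{K_p/\Q}((\beta+m)^2)$, and set $\beta_0=\beta+m_0$, with associated data $Q_0,s_0$. Minimality against the shifts by $\pm 1$, i.e. $\Tr_{K_p/\Q}((\beta_0\pm 1)^2)\ge\Tr_{K_p/\Q}(\beta_0^2)$, simplifies to $|2\Tr_{K_p/\Q}(\beta_0)|\le d$, that is $|s_0|\le d/2$; and $\Tr_{K_p/\Q}(\beta_0^2)\le\Tr_{K_p/\Q}(\beta^2)<4d+2$ still holds, so $s_0^2>\tfrac{(2d+1)(Q_0-2)}{2}$.

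The heart of the argument is the claim $Q_0\le 2$. Combining $s_0^2>\tfrac{(2d+1)(Q_0-2)}{2}$ with $|s_0|\le d/2$ gives $Q_0<2+\tfrac{d}{4}$, which is $<d/2$ for $d\ge 9$; combining it instead with $|s_0|\le Q_0$ gives $2Q_0^2-(2d+1)Q_0+2(2d+1)>0$. For $d\ge 9$ (hence for all $p\ge 29$) the quadratic $2x^2-(2d+1)x+2(2d+1)$ is negative at $x=3$ and at $x=d/2$, so it is negative on the whole interval $[3,d/2]$; thus $Q_0\notin[3,d/2]$, and since $Q_0<d/2$ we conclude $Q_0\in\{0,1,2\}$. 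Listing the integer vectors with $\sum b_i^2\le 2$ and discarding those with $\Tr_{K_p/\Q}(\beta_0^2)\not<4d+2$ — which eliminates $\pm(\omega_i-\omega_h)$, whose square has trace exactly $4d+2$ — leaves $\beta_0\in\{0\}\cup\{\pm\omega_i\}\cup\{\pm(\omega_i+\omega_h):i\neq h\}$.

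Finally I would undo the translation: for each $\beta_0$ on this list, solve $\Tr_{K_p/\Q}((\beta_0+m)^2)=\Tr_{K_p/\Q}(\beta_0^2)-2ms_0+dm^2<4d+2$ for $m\in\Z$. Since $d\ge 14$, this leaves $|m|\le 2$ for $\beta_0=0$ (so $\pm\beta\in\{0,1,2\}$), $|m|\le 1$ for $\beta_0=\pm\omega_i$ (so $\pm\beta\in\{\omega_i,\omega_i\pm 1\}$), and only $m=0$ for $\beta_0=\pm(\omega_i+\omega_h)$ — it is precisely the exclusion of $\pm(\omega_i+\omega_h)+1$, whose square has trace $5d-10$, that requires $d\ge 12$, i.e. $p\ge 29$. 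Evaluating $(2d+1)Q-2s^2$ on the survivors reproduces the stated traces. I expect the bound $Q_0\le 2$ to be the only genuine obstacle: Cauchy--Schwarz alone ($s_0^2\le dQ_0$) merely gives $Q_0\le 4d+1$, so one really must play the minimality inequality $|s_0|\le d/2$ off against the integrality inequality $|s_0|\le Q_0$ to collapse the range; the rest is bookkeeping with the trace formula.
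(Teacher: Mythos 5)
Your proof is correct, and it takes a genuinely different route from the one in the paper. Both arguments start from the same trace formula (your $\Tr_{K_p/\Q}(\beta^2)=(2d+1)Q-2s^2$ is exactly Proposition~\ref{prop:alphap}(iv) rewritten), but from there the paper counts the number $k$ of nonzero coordinates, uses the lower bound $\Tr_{K_p/\Q}(\beta^2)\geq k+2k(d-k)$ to force $k\in\{0,1,2,d-1,d\}$, and then handles the cases $k=d-1$ and $k=d$ by a separate combinatorial argument (partitioning the coefficients into groups of equal values). You instead exploit the relation $1=-\sum\omega_j$ to reduce $\beta$ modulo $\Z$-translation to a representative $\beta_0$ with $\lvert s_0\rvert\le d/2$, and then play that bound off against the integrality bound $\lvert s_0\rvert\le Q_0$ to force $Q_0\le 2$; the large-$k$ cases of the paper ($\omega_i\pm 1$, $1$, $2$) then reappear automatically when you undo the translation. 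I checked the key steps: the sign of $2x^2-(2d+1)x+2(2d+1)$ at $x=3$ and $x=d/2$, the exclusion of $\pm(\omega_i-\omega_h)$ (trace exactly $4d+2$), and the exclusion of $\pm(\omega_i+\omega_h)+1$ (trace $5d-10$, which needs $d\ge 12$, comfortably within $p\ge 29$) — all are right, and your needed thresholds are weaker than the hypothesis. Your version buys a cleaner uniform treatment that avoids the grouping casework; the paper's version is more self-contained in that it never passes to a reduced representative and reads off all seven rows of the table in one sweep.
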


Note that $1=-\sum \omega_j$, and so if we express the elements listed in the lemma in terms of the integral basis $\omega_1,\dots, \omega_d$, we get the elements 	$1=-\sum \omega_j,\
\omega_i+1=\omega_i-\sum \omega_j,\  
\omega_i-1=\omega_i+\sum \omega_j,\  
2=-2\sum \omega_j$ (where all the sums over $j$ run from $1$ to $d=(p-1)/2$).

\begin{proof}
Assume that $\beta=\sum a_i\omega_i$  satisfies $\Tr_{K_p/\Q}(\beta^2)<4d+2$, and let exactly $k$ of the coefficients $a_i$ be nonzero.

Proposition \ref{prop:alphap}$(iv)$ says that \[\Tr_{K_p/\Q}(\beta^2)=	\sum a_j^2 +2\sum_{i>j} (a_i-a_j)^2.\]
There are $k(d-k)$ ordered pairs $(a_i,a_j)$ such that exactly one of $a_i,a_j$ is nonzero, and each such pair contributes at least $1$ to $\sum_{i>j} (a_i-a_j)^2$. Thus 
\[4d+2>\Tr_{K_p/\Q}(\beta^2)\geq k + 2k(d-k)=-2k^2+k(2d+1).\]

As a function of $k$, the right-hand side increases for $k\leq d/2+1/4$ and decreases for $k\geq d/2+1/4$. Moreover (using our assumption that $p\ge 29$), the inequality does \textit{not} hold for $k=3$ and $k=d-2$.

\medskip 

This leaves only the cases $k=0,1,2,d-1,d$. The rest of the proof consists only of laborious verification. First, it is easily verified that if $k=0,1$, then $\pm\beta=0,\omega_i$.

Let $k=2$. Using the above formulas, we get $4d+2>\Tr_{K_p/\Q}(\beta^2)\geq \sum a_j^2 + 4(d-2)$, and so $\sum a_j^2<10$.
By the symmetry of the formula for $\Tr_{K_p/\Q}(\beta^2)$, we can assume that $a_1\geq 1, |a_1|\geq a_2\neq 0, a_3=\dots=a_d=0$.
Then the only possibilities for $(a_1,a_2)$ are $(2,\pm 2),(2,\pm1),(1,\pm1)$, among which only $(1,1)$ (corresponding to the elements $\omega_i+\omega_h$) gives a trace less than $4d+2$.

\medskip 

Let $k=d-1$. In this case, first, assume that the $k=d-1$ nonzero coefficients $a_i$ attain at least two distinct values. Then we can group these nonzero coefficients into two groups with $u=d-1-v\geq v\geq 1$ elements such that all the values in the first group are distinct from the values in the second group. There are then $uv$ ordered pairs of elements from different groups, and so $4d+2>\Tr_{K_p/\Q}(\beta^2)\geq d-1 + 2(d-1)+2uv$. Thus, $u+v+6=d+5> 2uv$, which is impossible: If $v=1$, then $u+7> 2u$, and if $v\geq 2$, then $2u+6\geq u+v+6>2uv\geq 4u$.

Thus, the only possibility when $k=d-1$ is that all the $k=d-1$ nonzero coefficients $a_i$ attain the same value. This value then must be $\pm 1$, leading to the element $\beta=\omega_i+1=\omega_i-\sum \omega_j$.

\medskip 
 
Finally, let $k=d$. If all $d$ nonzero coefficients attain the same value, then we just get $\pm\beta=1,2$ (note that $1=-\sum \omega_j$). Thus, assume that there are at least two distinct values and again group them into two groups with $u=d-v\geq v\geq 1$ elements to get $4d+2>d + 2uv$, i.e., 
$$3d+2>2u(d-u).$$ As $u\geq v$, we have $u\geq d/2$, which is the region where the right-hand side of the last displayed inequality is decreasing. The inequality does not hold for $u=d-2$, leaving us only with $u=d-1, v=1$.

But this holds for any grouping of the coefficients $a_i$ into two groups with distinct values, and so actually there can be only two distinct values, without loss of generality, say $0<a=a_1=a_2=\dots=a_{d-1}\neq a_d=b\neq 0$.

The formula for the trace then implies that $$4d+2>\left( (d-1)a^2+b^2\right) + 2(d-1)(b-1 )^2,$$ and so $a=1$. Then $b-1=\pm 1$, and so $b=2$, giving the element $\omega_i-1=\omega_i+\sum \omega_j$.
\end{proof}

Of course, one could easily continue with the arguments given above to also determine the squares with larger traces (in fact, below we will only need to know the result for traces $\leq 5d/2+2$).

\medskip

Consider now the element $\alpha_p=2-\omega\in\co_p$.

\begin{prop}\label{cor:psquares}
	 $2\alpha_p$ cannot be represented as the sum of squares in $\co_p$, where $p\geq 7$ is a prime. 
\end{prop}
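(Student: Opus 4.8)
The plan is to run a trace argument in the spirit of the proofs of Propositions~\ref{cor:2squares} and \ref{prop:alpha*gamma}. Assume for contradiction that $2\alpha_p=\sum_{i=1}^{k}\beta_i^2$ with all $\beta_i\in\co_p$ nonzero. First I would record that $\Tr_{K_p/\Q}(2\alpha_p)=2(2d+1)=4d+2$ by Proposition~\ref{prop:alphap}$(ii)$, while $\Tr_{K_p/\Q}(\beta_i^2)=\sum_j\sigma_j(\beta_i)^2\ge d\,\Nr_{K_p/\Q}(\beta_i)^{2/d}\ge d$ by the arithmetic--geometric mean inequality; hence $kd\le 4d+2$, so $k\le 4$. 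The case $k=1$ is immediate: $\Nr_{K_p/\Q}(2\alpha_p)=2^dp$ by Proposition~\ref{prop:alphap}$(iii)$ is not a perfect square, so $2\alpha_p$ is not a square.

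For $p\ge 29$ I would then finish combinatorially. When $k\ge 2$ each summand satisfies $\Tr_{K_p/\Q}(\beta_i^2)\le(4d+2)-d=3d+2<4d+2$, so Lemma~\ref{lemma:ptrace} applies to every $\beta_i$: up to sign it is $1$, $\omega_j$, $\omega_j+1$, $\omega_j-1$, or $\omega_j+\omega_h$ (the value $2$ is excluded, its square having trace $4d>3d+2$), with $\Tr_{K_p/\Q}(\beta_i^2)$ equal respectively to $d$, $2d-1$, $3d-3$, $3d+1$, or $4d-6$. So it suffices to show $4d+2$ is not a sum of at most four of the numbers $d$, $2d-1$, $3d-3$, $3d+1$, $4d-6$. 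Writing each as $c_jd+r_j$ with $c_j\in\{1,2,3,3,4\}$ and $r_j\in\{0,-1,-3,1,-6\}$, such a sum equals $(\sum c_j)d+\sum r_j$ with $\sum r_j\in[-24,4]$; since $d\ge14$, matching the $d$-coefficient forces $\sum c_j\in\{3,4,5\}$, and a short case check rules out each option: for $\sum c_j=4$ the reachable constant terms are $-6,-3,-2,-1,0,1$, never $2$; for $\sum c_j=3$ one would need $\sum r_j=d+2\ge16$; and for $\sum c_j=5$ one would need $\sum r_j=2-d\le-12$, whereas the most negative reachable value is $-6$. This contradiction settles $p\ge 29$.

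For the finitely many remaining primes $p\in\{11,13,17,19,23\}$ Lemma~\ref{lemma:ptrace} is not available (its proof uses $p\ge 29$), so I would instead run a direct finite search over all ways of writing $2\alpha_p$ as a sum of at most four squares in $\co_p$ — finite because $\Tr_{K_p/\Q}(\beta^2)\le 4d+2$ bounds each candidate $\beta$ — and check that none exists. (I note that the hypothesis should read $p\ge 11$: for $p=7$ the field $K_7$ occurs in Theorem~\ref{Thm:o2}, so every element of $2\co_{K_7}^+$, and in particular $2\alpha_7$, is a sum of squares.)

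The hard part is the combinatorial step: one must verify that the bounded remainder $\sum r_j$ can never compensate for choosing $\sum c_j\ne 4$, which is exactly what ties the clean argument to $d\ge 14$, i.e.\ $p\ge 29$, and forces the handful of smaller primes into a routine computation.
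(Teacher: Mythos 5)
Your proof is correct and follows essentially the same route as the paper: for $p\ge 29$ the paper likewise reduces to Lemma~\ref{lemma:ptrace} via the trace identity $4d+2=\sum\Tr_{K_p/\Q}(\beta_i^2)$ and then asserts that ``all the possible combinations'' fail (you have simply written out that combinatorial check explicitly), and it also delegates the remaining small primes to a computation. Your parenthetical remark about $p=7$ is likewise correct: since $\Q(\zeta_7+\zeta_7^{-1})$ appears in Theorem~\ref{Thm:o2}, the element $2\alpha_7$ must be a sum of squares --- indeed $2\alpha_7=(\omega_2+1)^2+1^2+\omega_2^2$ in $\co_7$ --- so the hypothesis has to be read as $p>7$.
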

\begin{proof}
	For primes less than $29$, we computationally check that it is indeed the case. Assume that $p\ge 29$. Let $2\alpha_p=\sum \beta^2_i$. The integer $2\alpha_p$ is not a square, as its norm is $2^dp$. Taking the traces, we have $4d+2=\sum \Tr_{K_p/\Q}(\beta_i^2)$. In particular, the traces of $\beta_i^2$ must be less than $4d+2$, and considering all the possible combinations from the above lemma, we conclude that $2\alpha_p$ cannot be represented by the sum of squares.
\end{proof}

\begin{lemma}\label{lemma:apu}
 For $p>7$, $2\alpha_p-1\not \succ 0$, and $2\alpha_p-\omega_j^2\not \succ 0$ for $j=1,\ldots, d-1$.
\end{lemma}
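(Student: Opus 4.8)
The plan is to prove each non-positivity statement by producing a \emph{single} real embedding of $K_p$ at which the element is non-positive; since $\beta\succ 0$ means $\sigma(\beta)>0$ for \emph{every} embedding $\sigma$, one such embedding is enough. Label the $d$ real embeddings by $\sigma_k(\zeta_p)=\zeta_p^k$, $k=1,\dots,d$, so that $\sigma_k(\omega_m)=2\cos(2\pi mk/p)$ and $\sigma_k(\alpha_p)=2-2\cos(2\pi k/p)=4\sin^2(\pi k/p)$; I will also use $\omega_j^2=2+\omega_{2j}$, so that $2\alpha_p-1=3-2\omega$ and $2\alpha_p-\omega_j^2=2-2\omega-\omega_{2j}$. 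Since $p>7$ is prime we have $p\ge 11$, and the argument rests on two elementary facts. First, $\cos(2\pi/p)>\tfrac34$, equivalently $8\sin^2(\pi/p)<1$: indeed $2\pi/p\le 2\pi/11<\pi/5$ and $\cos(\pi/5)=\tfrac{1+\sqrt5}{4}>\tfrac34$. Second, $\sin^2\!\bigl(3\pi/(2p)\bigr)>2\sin^2(\pi/p)$: this follows from the identity $\tfrac{\sin(3x/2)}{\sin x}=2\cos(x/2)-\tfrac{1}{2\cos(x/2)}$, which is increasing in $\cos(x/2)$ and equals $\sqrt 2$ when $\cos(x/2)=\cos(\pi/12)$, combined with $\cos(\pi/(2p))\ge\cos(\pi/22)>\cos(\pi/12)$.

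For $2\alpha_p-1$ I would use $\sigma_1$: $\sigma_1(2\alpha_p-1)=2\bigl(2-2\cos(2\pi/p)\bigr)-1=3-4\cos(2\pi/p)<0$ by the first fact, so $2\alpha_p-1\not\succ 0$.

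For $2\alpha_p-\omega_j^2$ with $1\le j\le d-1$, I would rewrite $\cos^2(2\pi j/p)=\sin^2\!\bigl(\pi(4j-p)/(2p)\bigr)$ and note that $4j-p$ is a nonzero odd integer with $0<|4j-p|<p$. If $|4j-p|\ge 3$, then $3\pi/(2p)\le\pi|4j-p|/(2p)<\pi/2$, so monotonicity of $\sin$ on $[0,\pi/2]$ and the second fact give $\cos^2(2\pi j/p)\ge\sin^2(3\pi/(2p))>2\sin^2(\pi/p)$, whence $\sigma_1(2\alpha_p-\omega_j^2)=8\sin^2(\pi/p)-4\cos^2(2\pi j/p)<0$. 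The only $j$ not covered is the one with $|4j-p|=1$: since exactly one of $p-1$, $p+1$ is divisible by $4$, there is a unique such index $j_0$, it lies in $\{1,\dots,d-1\}$ for $p\ge 11$, and $4j_0\in\{p-1,p+1\}$. For $j_0$ I would instead use $\sigma_2$: then $\sigma_2(\omega_{j_0})=2\cos(4\pi j_0/p)=-2\cos(\pi/p)$ in either case, so, using $\sin(2\pi/p)=2\sin(\pi/p)\cos(\pi/p)$,
\[
\sigma_2(2\alpha_p-\omega_{j_0}^2)=8\sin^2(2\pi/p)-4\cos^2(\pi/p)=4\cos^2(\pi/p)\bigl(8\sin^2(\pi/p)-1\bigr)<0
\]
by the first fact. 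In every case an embedding at which $2\alpha_p-\omega_j^2$ is negative has been exhibited, so $2\alpha_p-\omega_j^2\not\succ 0$.

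I expect the main obstacle to be precisely the single index $j_0\approx p/4$: there $\sigma_1(2\alpha_p-\omega_{j_0}^2)$ is a small \emph{positive} number, because $2\sigma_1(\alpha_p)$ and $\sigma_1(\omega_{j_0})^2=4\cos^2(2\pi j_0/p)$ nearly cancel, so one is forced to change embeddings; the point is that $\sigma_2$ sends $\omega_{j_0}^2$ to $\omega_d^2=4\cos^2(\pi/p)$, which is large. One should also be slightly careful with the second trigonometric fact: it is genuinely tight (it degenerates to equality at the spurious value $p=6$), so it must be derived from the displayed identity and not from a crude estimate. Alternatively, the finitely many small primes could be checked computationally, in the spirit of the proof of Proposition~\ref{cor:psquares}.
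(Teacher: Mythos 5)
Your proposal is correct and follows essentially the same strategy as the paper: evaluate at the identity embedding using $\cos(2\pi/p)>\tfrac34$, observe that this handles every $j$ except the single index with $4j\equiv p\pm1\pmod{2p}$ (the paper's substitution $m\equiv p-4j\pmod{2p}$ is your $|4j-p|$), and dispose of that exceptional index by passing to the second embedding, where $\omega_{j_0}$ maps to $-2\cos(\pi/p)$. The only difference is technical: you establish the key inequality for $|4j-p|\ge 3$ via the exact identity $\sin(3x/2)/\sin x=2\cos(x/2)-\tfrac{1}{2\cos(x/2)}$ and monotonicity, whereas the paper uses Taylor bounds on cosine and a sign analysis of the resulting quartic in $m$ — your route is slightly cleaner but not a different argument.
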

\begin{proof}
	First, we note that $2\alpha_p=4-2\omega$ and $\omega=2\cos (2\pi/p)$. So for $p>7$, using that $\cos(x)\approx 1-x^2$ for small $x$ and approaching $1$ as $p\rightarrow \infty$, while $\cos (2\pi/p)>\frac{3}{4}$, we get that \[2\alpha_p-1=4-4\cos (2\pi/p)-1<0. \]
	
	 For the latter inequality, we note that $\omega_j^2=2+\omega_{2j}$, to get the following:
	\begin{align*}
	2\alpha_p-\omega_j^2&=2-4\cos (2\pi/p)-2\cos(4 \pi j/p)\\&=2(1-2\cos (2\pi/p)-\cos(4 \pi j/p))
	\end{align*}
	for $j=1,\ldots,d-1$. 
	Therefore, it suffices to show that $1-2\cos (2\pi/p)-\cos(4 \pi j/p)\not\succ 0$. 
	
	 Let $m \equiv p-4j \pmod{ 2p}$ be an odd integer. Then we have a bijection between $j$'s and odd $m$'s. Using the congruence, it follows that
	\[\cos ( {4j \pi/p}  ) = \cos\pi  ( 1-m/p  )  = - \cos ( {\pi m/p}).\]
	
	Substituting it into the above and rearranging a bit, we find that $\cos   ( \pi m/p )\ge 2 \cos  ( 2 \pi/p )-1$.
	
	We have approximately $\cos(x)  \approx 1 - \frac{x^2}{2}$ (specifically, from Taylor expansion we get the inequality $1 - \frac{x^2}{2}+\frac{x^4}{24}\geq \cos (x) \geq 1 - \frac{x^2}{2}$). Thus, using the upper and lower bounds for the cosines, we get
	
\begin{align}
1- \frac{\pi^2 m^2}{2p^2}+\frac{\pi^4m^4}{24p^4}&\geq 1 - \frac{4 \pi^2}{p^2}\nonumber\\
\pi^2m^4-12p^2m^2+96p^2&\geq 0 \label{ineq:1}
\end{align}
	Notice that the above polynomial in $m$ is even, and so we can assume that $m>0$; by the congruence defining $m$, we can also take $m\leq p$. Moreover, for $p\ge 10$, the value of the polynomial alternates when evaluated at $m=2, 3$ and at $m=p, (p+2)$, i.e., $+,-$ and $-,+,$ respectively. Therefore, by Rolle's theorem, the only odd positive integer value of $m\le p$ for which the inequality (\ref{ineq:1}) holds is $m=1$. 
	
	We can write all conjugates of $1-2\cos (2\pi/p)-\cos(4 \pi j/p)$ as $1-2\cos (2\pi k /p)-\cos(4 \pi jk/p)$ for $k=1,\ldots,d$.	Now, returning back to $j$, we have $4j \equiv p-1 \pmod{2p}$. 
	
	Let us now look at the second conjugate, i.e., $1 \geq 2 \cos \left ( \frac{4 \pi}{p} \right ) + \cos \left ( \frac{8j \pi}{p} \right )$. Then \[\cos \left ( \frac{8j \pi}{p} \right ) = \cos \left ( \frac{2 \pi (p-1)}{p} \right ) = \cos \left ( \frac{2 \pi}{p} \right ).\] The above gives us $1 \geq 2 \cos \left ( \frac{4 \pi}{p} \right ) + \cos \left ( \frac{2 \pi}{p} \right )$, which is clearly false using inequality $\cos (x)\geq  1 - \frac{x^2}{2}$.
\end{proof}
Finally, the proof of the special case of Kitaoka's conjecture follows. We note that for primes $p\equiv 3 \pmod{4}$ the result was already known by Theorem 1 in \cite{EK}, as the corresponding degree of the number field is odd. However, the result for $p\equiv 1 \pmod{4}$ is new.
\begin{theorem}\label{thm:p-K}
	There are no universal classical ternary quadratic forms over $\co_p$, where $p>5$ is a prime number.
\end{theorem}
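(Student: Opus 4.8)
The plan is to imitate the proof of Theorem~\ref{thm:2-K}, with $\alpha_p=2-\omega$ playing the role of $\alpha_n$: Proposition~\ref{prop:alphap} supplies the arithmetic of $\alpha_p$ and of squares, and Proposition~\ref{cor:psquares} (that $2\alpha_p$ is not a sum of squares) is the ultimate source of the contradiction. For $p\equiv 3\pmod4$ the claim already follows from \cite[Theorem~1]{EK} (the degree $d=(p-1)/2$ is odd), and for the remaining small primes $p\in\{13,17\}$ we check it by computer; so assume $p\ge 29$, which is what Lemmas~\ref{lemma:ptrace} and~\ref{lemma:apu} require, and suppose for contradiction that $Q$ is a universal classical ternary quadratic form over $\co_p$. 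Since $Q$ represents the unit $1$, Lemma~\ref{lemma:unit} gives $Q\cong\langle 1\rangle\perp Q'$ with $Q'$ binary classical.

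By Proposition~\ref{prop:alphap}, $\Tr_{K_p/\Q}(\alpha_p)=2d+1<\frac{5}{2}d$ (as $d\ge 3$), so $\alpha_p$ is indecomposable by part~c) of Theorem~\ref{cor:5/2}, and $\Nr_{K_p/\Q}(\alpha_p)=p$ shows $\alpha_p$ is not a square. A totally positive indecomposable non-square represented by $\langle 1\rangle\perp Q'$ must in fact be represented by $Q'$ (the first summand could only add a square, and $\alpha_p$ does not decompose), so $Q'$ represents $\alpha_p$; since $\Nr_{K_p/\Q}(\alpha_p)=p$ is prime, the representing vector is primitive, and the change-of-basis lemma used in Theorem~\ref{thm:2-K} (\cite{GMR}) lets us assume $Q'\cong\begin{pmatrix}\alpha_p&\beta\\\beta&\gamma\end{pmatrix}$ with $\beta,\gamma\in\co_p$. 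Exactly as in Theorem~\ref{thm:2-K} one studies the discriminant $D:=\alpha_p\gamma-\beta^2$: writing $D=\sum_i m_i\kappa_i$ into indecomposables, for any non-square $\kappa_1$ the form $Q'$ represents $\kappa_1$, and multiplying $\alpha_p x^2+2\beta xy+\gamma y^2=\kappa_1$ by $\alpha_p$ and completing the square gives $(\alpha_p-m_1y^2)\kappa_1\succeq 0$, hence $\alpha_p-m_1y^2\succeq 0$; indecomposability of $\alpha_p$ together with the impossibility of $\alpha_p=m_1y^2$ (it would give $p=m_1^d\Nr_{K_p/\Q}(y)^2$, forcing $m_1=1$ and $\Nr_{K_p/\Q}(y)^2=p$, absurd) forces $y=0$ and $\kappa_1=\alpha_p x^2$. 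Thus $D=\sum_i x_i^2+\alpha_p\sum_j y_j^2$ for suitable $x_i,y_j\in\co_p$.

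Now bring in $2\alpha_p$. As $Q$ is universal it represents $2\alpha_p$, and since $2\alpha_p$ has non-square norm $2^dp$ we may write $2\alpha_p=z^2+Q'(v)$ with $Q'(v)=2\alpha_p-z^2\succeq 0$; because $2\alpha_p$ is not a square, if $z\ne 0$ then $z^2\prec 2\alpha_p$ strictly, so $\Tr_{K_p/\Q}(z^2)<\Tr_{K_p/\Q}(2\alpha_p)=4d+2$ and Lemma~\ref{lemma:ptrace} confines $z$ to $\{\pm1,\pm\omega_i,\pm(\omega_i\pm1),\pm(\omega_i+\omega_h),\pm2\}$, all excluded ($\pm1$ and $\pm\omega_i$ by Lemma~\ref{lemma:apu}, the rest by an elementary sign check, e.g.\ $2\alpha_p-4=-2\omega$ has conjugates of both signs). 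Hence $z=0$ and $Q'$ itself represents $2\alpha_p$; multiplying by $\alpha_p$, completing the square, substituting the shape of $D$ and using $v\ne 0$ (otherwise $u^2=2$) gives $\alpha_p\bigl(2\alpha_p-v^2\sum_j y_j^2\bigr)=(\alpha_p u+\beta v)^2+v^2\sum_i x_i^2\succeq 0$, whence each $(vy_j)^2\preceq 2\alpha_p$; the same Lemma~\ref{lemma:ptrace}/\ref{lemma:apu} analysis forces every $vy_j=0$, so every $y_j=0$, $D=\sum_i x_i^2$, and $(\alpha_p u+\beta v)^2+\sum_i(vx_i)^2=2\alpha_p^2$. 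Finally, $2\alpha_p^2$ is not a square (else $\sqrt2\in K_p$), the norm super-additivity of Lemma~\ref{lemma:norm_bound} bounds the number of nonzero squares on the left by $\lfloor 2p^{2/d}\rfloor\le 3$ for $p\ge 29$, and the congruence $\alpha_p\mid\beta^2+\sum_i x_i^2$ (from $\alpha_p\gamma=\beta^2+\sum_i x_i^2$, with $(\alpha_p)$ prime) together with multiplicativity of $\Nr_{K_p/\Q}$ pins $u,v,\beta,\gamma$ down enough to reach a contradiction.

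The main obstacle is exactly this last step: unlike in Theorem~\ref{thm:2-K}, there is no ready-made companion element playing the role of $\gamma_n$ (whose indecomposability killed the spurious terms there), so the bookkeeping that eliminates the extra summands has to be carried by the rigidity of $\alpha_p$ (prime norm, indecomposable, non-square), the precise trace classification in Lemma~\ref{lemma:ptrace}, and the explicit inequalities of Lemma~\ref{lemma:apu}. Making this combinatorial and totally-positivity argument watertight---and disposing of the small primes $p<29$ directly---is where the effort concentrates.
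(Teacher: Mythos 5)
There is a genuine gap, and you essentially flag it yourself: the entire proof hinges on the final contradiction, and your last step (``the congruence $\alpha_p\mid\beta^2+\sum_i x_i^2$ together with multiplicativity of the norm pins $u,v,\beta,\gamma$ down enough to reach a contradiction'') is a hope, not an argument. Transplanting the proof of Theorem~\ref{thm:2-K} stalls precisely because that proof needs \emph{two} independent indecomposables $\gamma_n$ and $\alpha_n\gamma_n$ to be fed into the completed square, and you have only $\alpha_p$ and $2\alpha_p$; after you conclude $D=\alpha_p\gamma-\beta^2=\sum_i x_i^2$ and $(\alpha_p u+\beta v)^2+\sum_i(vx_i)^2=2\alpha_p^2$, nothing rules out genuine solutions of that shape (e.g.\ $v$ a unit, $\alpha_p u+\beta v=0$, $\sum x_i^2=2\alpha_p^2 v^{-2}$ is not visibly impossible), and no contradiction has been derived. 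A secondary, smaller issue: the ``elementary sign check'' excluding $z=\omega_i\pm1,\ \omega_i+\omega_h,\ 2$ is asserted rather than proved; it can be patched via the trace bound of Theorem~\ref{thm:1} (those $z^2$ leave a remainder of trace $<\tfrac32 d$ that would have to be $0$ or $1$, both impossible), but as written it is a gap too.

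The paper avoids the dead end by a different pivot: it first splits on whether all totally positive units of $\co_p$ are squares (if not, $Q$ diagonalizes as $\langle 1,\varepsilon,\alpha_p\rangle$ and fails to represent $\varepsilon\alpha_p$), and in the remaining case it exploits the representation of the rational integer $2$, whose only decomposition into indecomposables is $1+1$ since $2$ is unramified. This forces $Q'$ to represent $1$ or $2$, so after completing the square \emph{with respect to the entry $2$} (not $\alpha_p$) and invoking Theorem~\ref{thm:1}, Lemma~\ref{lemma:ptrace} and Lemma~\ref{lemma:apu}, the form collapses to $\langle 1,1,\alpha_p\rangle$ or $\langle 1,2,\alpha_p\rangle$; both are then killed directly by the single element $2\alpha_p$ via Proposition~\ref{cor:psquares} and the identity $(2-z^2)\alpha_p=x^2+y^2$ (resp.\ $x^2+2y^2$). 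If you want to salvage your route, you would need to import this use of the element $2$ (or manufacture a second indecomposable companion to $\alpha_p$); as it stands the argument does not close.
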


Note that the primes $2$ and $3$ can be excluded as they correspond to the trivial extension $\Q$. For $p=5$, the sum of three squares is universal over $\co_5=\Z[\frac{1+\sqrt 5}2]$ \cite{Ma}.

\begin{proof}
For $p=7$, $K_7$ is an odd-degree extension; ergo, there is no ternary universal quadratic form \cite[Lemma 3]{EK}. Thus, assume that $p>7$ and let $Q$ be a ternary universal quadratic form over $\co_p$. By Lemma \ref{lemma:unit}, we have
	\begin{equation}\label{eq:3}
	Q\cong \langle 1 \rangle\perp \underset{=Q'}{\begin{pmatrix} \alpha & \beta\\ \beta & \gamma \end{pmatrix}}.
	\end{equation}
If there exists a nonsquare totally positive unit $ \varepsilon \in\co_p$, we reapply the lemma to give us \[Q \cong \langle 1 \rangle\perp \langle \varepsilon \rangle\perp \langle\beta\rangle,\] where $\beta\in \co^+_p$. Without loss of generality, assume that $\beta=\alpha_p,$ as $\alpha_p$ is not a square and indecomposable. Noting that $ \varepsilon\alpha_p$ is also nonsquare indecomposable and not represented by $Q$, we have a contradiction.
	
	Hence, we assume that all totally positive units are squares, and thus we have units of every signature in $\co_p$ \cite[p. 111, Corollary 3]{Na}. Given that $p$ is an odd prime number, $2$ does not divide the discriminant of $\co_p$, and so does not ramify in $\co_p$ (that is, $2$ is squarefree in $\co_p$). The only decomposition of $2$ into indecomposable integers 
is $1+1$. Therefore, if $Q$ represents $2$, then $Q'$ represents $1$ or $2$:
 
If it represents $1$, by Lemma \ref{lemma:unit} follows that \[Q\cong \langle 1 \rangle\perp \langle 1 \rangle \perp \langle\beta \rangle,\] for some $\beta \in \co_p^+$. As the form has to represent $\alpha_p$, and $\alpha_p$ is a nonsquare indecomposable integer (as its trace is $2d+1$ and its norm is $p$ by Proposition \ref{prop:alphap}), it implies that $\langle\beta \rangle$ represents $\alpha_p$. Specifically, $\beta x^2=\alpha_p$, where $x$ must be a unit. Therefore, we can assume that $\beta=\alpha_p$. By the above corollary, $2\alpha_p$ is not a sum of squares. If $x^2+y^2+\alpha_pz^2=2\alpha_p$, with $z\not=0$. Then, $(2-z^2)\alpha_p=x^2+y^2\succeq 0$. This gives us a contradiction since the only decomposition of $2$ is $1+1$, and if $z^2=1$, then $\alpha_p$ is represented as a sum of squares.
	
	In the case where $Q'$ represents $2$ and $2$ is squarefree in $\co_p$, we can assume the following: \[Q'\cong 2x^2+2\beta xy + \gamma y^2,\] for $\beta,\gamma \in \co_p$. The form has to represent $\alpha_p$, hence 
 \[2\alpha_p=4x^2+4\beta xy + 2\gamma y^2=(2x+\beta y)^2+(2\gamma-\beta^2)y^2.\]
	
	We note that the trace of $2\alpha_p$ is $4d+2$. From part a) of Theorem \ref{thm:1}, we find that if \[ \Tr_{K_p/\Q}((2x+\beta y)^2)>\frac{5}{2}d+2,\] then $(2\gamma-\beta^2)y^2=0$ or $1$. Both cases are impossible, as $2\alpha_p$ would be represented as a sum of squares. We conclude that $\Tr_{K_p/\Q}((2x+\beta y)^2)\le \frac{5}{2}d+2$. For $p\ge 29$, by Lemma \ref{lemma:ptrace}, we have $(2x+\beta y)^2=0,1$ or $\omega_j^2$, as $\frac{5}{2}d+2< 3d-3$ for $d>10$ (or $p\ge 23$). It remains to deal with $p\equiv 1 \pmod{4}$ such that $7<p<29,$ i.e., $p=13, 17.$ For these primes, we computed that $\Tr_{K_p/\Q}((2x+\beta y)^2)\le \frac{5}{2}d+2$ is also only satisfied for $(2x+\beta y)=0, 1$ or $\omega_j^2$. By Lemma \ref{lemma:apu}, we know that $(2x+\beta y)=1$ or $\omega_j^2$ is impossible. 
	This implies that $2x=-\beta y$, and hence $\alpha_p=\gamma y^2+2x^2$. As $\alpha_p$ is an indecomposable integer, we have that $xy=0$ and so $\beta=x=0.$ Without loss of generality, assume that $Q\cong \langle 1 \rangle\perp \langle 2\rangle \perp \langle \alpha_p\rangle.$ By the same argument as before, we can show that $2\alpha_p$ is not represented by $\langle 1 \rangle\perp \langle 2\rangle \perp \langle \alpha_p\rangle$ using that $(2-z^2)\alpha_p=x^2+2y^2\succeq 0$, and therefore $Q$ cannot be universal.
\end{proof}


\begin{thebibliography}{abcde}
		
		
		\bibitem[BFN]{BN} E. Bayer-Fluckiger, G. Nebe, \emph{On the {E}uclidean minimum of some real number fields}, J. Th\'{e}or. Nombres Bordeaux \textbf{17} (2005), 437--454
		
		\bibitem[Bh]{Bh} M. Bhargava, \emph{On the Conway--Schneeberger Fifteen Theorem},  Contemp. Math. \textbf{272} (1999), 27--37  
		
		\bibitem[BH]{BH} M. Bhargava, J. Hanke, \emph{Universal quadratic forms and the 290-theorem}, preprint
		
		\bibitem[BS+]{BS+} M. Bhargava, A. Shankar, T. Taniguchi, F. Thorne, J. Tsimerman, Y. Zhao, \emph{Bounds on 2-torsion in class groups of number fields and integral points on elliptic curves}, J. Amer. Math. Soc. \textbf{33} (2020),  1087--1099
		
		\bibitem[BK]{BK} V. Blomer, V. Kala, \emph{Number fields without universal $n$-ary quadratic forms}, Math. Proc. Cambridge Philos. Soc. \textbf{159} (2015), 239--252
		
		\bibitem[BCP]{Mag} W. Bosma, J. Cannon, C. Playoust, \emph{The Magma algebra system. I. The user language}, J. Symbolic Comput. \textbf{24} (1997), 235--265 

         \bibitem[CDV]{CDV} S. Capparelli, A. Del Fra, A. Vietri, \emph{Searching for hyperbolic polynomials with span less than 4}, Exp. Math.\textbf{31}, (2022), 830--842
		
		\bibitem[CKR]{CKR} W. K. Chan, M.-H. Kim, S. Raghavan, \emph{Ternary universal integral quadratic forms}, Japan. J. Math. \textbf{22} (1996), 263--273
		
		\bibitem[CO]{CO} W. K. Chan, B.-K. Oh,   \emph{Can we recover an integral quadratic form by representing all its subforms?}, Adv. Math. \textbf{433} (2023), 109317, 20 pp.
		
		\bibitem[De]{De} J. I. Deutsch, \emph{Universality of a non-classical integral quadratic form over $\mathbb Q(\sqrt 5)$}, Acta Arith. \textbf{136} (2009), 229--242
		
		\bibitem[EK]{EK} A. G. Earnest, A. Khosravani, \emph{Universal positive quaternary quadratic lattices over totally real number fields}, Mathematika \textbf{44} (1997), 342--347

        \bibitem[FSE]{FSE} V. Flammang, J.-M. Sac-\'{E}p\'{e}e, \emph{Totally positive polynomials with small length}, Control Cybernet. \textbf{48}(3), (2019), 463--471

        \bibitem[Ge]{Ge} R. Gerkmann, \emph{Quadratsummen ganzer algebraischer Zahlen}, Master's Thesis, University of Dortmund, 2000
  
		\bibitem[GMT]{GMT} D. Gil Mu\~ noz, M. Tinkov\' a, \emph{The lifting problem for universal quadratic forms over simplest cubic fields}, Bull. Aust. Math. Soc. {\bf 110} (2024), no.~1, 77--89
		
		\bibitem[GMR]{GMR} W. H. Gustafson, M. E. Moore, I. Reiner, \emph{Matrix completions over {D}edekind rings}, Linear Multilinear Algebra \textbf{10} (1981), 141--144
		
		\bibitem[HHX]{HHX} Z. He, Y. Hu,  F. Xu, \emph{On indefinite $k$-universal integral quadratic forms over number fields}, Math. Z. \textbf{304} (2023), 20, 26 pp.
		
	    \bibitem[Ka]{Ka} V. Kala, \emph{Universal quadratic forms and indecomposables in number fields: A survey}, Commun. Math. \textbf{31} (2023), Special issue: Euclidean lattices: theory and applications, 81--114

        \bibitem[KK]{KK} K. Kramer, J. Krásenský, \textit{Non-universality of ternary quadratic forms over fields containing $\Q(\sqrt 2)$}, \href{https://arxiv.org/abs/2601.15568}{arxiv:2601.15568}


        \bibitem[KKK]{KKK} V. Kala, K. Kramer, J. Krásenský, \textit{Kitaoka's Conjecture and sums of squares}, Bull. Lond. Math. Soc. (to appear), \href{https://arxiv.org/abs/2510.19545}{arxiv:2510.19545}
        
        \bibitem[KKL]{KKL} 
        V. Kala, D. Kim, S. H. Lee, \textit{Universality lifting from a general base field}, Rev. Mat. Iberoam. \textbf{41} (2025), 2379–2400
        
		\bibitem[KT]{KT} V. Kala, M. Tinkov\' a,  \emph{Universal quadratic forms, small norms and traces in families of number fields},  Int. Math. Res. Not. IMRN (2023), 7541--7577
		
		\bibitem[KY1]{KY1} V. Kala, P. Yatsyna, \emph{Lifting problem for universal quadratic forms}, Adv. Math. \textbf{377} (2021), 107497, 24 pp.
		
		\bibitem[KY2]{KY2} V. Kala, P. Yatsyna, \emph{Sums of squares in {$S$}-integers}, New York J. Math. \textbf{26} (2020), 1145--1154
		
		\bibitem[KY3]{KY3} V. Kala, P. Yatsyna, \emph{On Kitaoka's conjecture and lifting problem for universal quadratic forms},  Bull. Lond. Math. Soc. \textbf{55} (2023), 854--864
		
		\bibitem[KYZ]{KYZ} V. Kala, P. Yatsyna, B. \. Zmija, \emph{Real quadratic fields with a universal form of given rank have density zero}, Amer. J. Math. (to appear), \href{https://arxiv.org/abs/2302.12080}{arxiv:2302.12080}
		
		\bibitem[KL]{KL} D. Kim, S. H. Lee, \emph{Lifting problem for universal quadratic forms over totally real cubic number fields}, Bull. Lond. Math. Soc. {\bf 56} (2024), no.~3, 1192--1206
		
		\bibitem[KO1]{KO1} M.-H. Kim, B.-K. Oh, \emph{Bounds for quadratic Waring's problem}, Acta Arith. \textbf{104} (2002), 155--164
		
		\bibitem[KO2]{KO2} M.-H. Kim, B.-K. Oh, \emph{Representations of integral quadratic	forms by sums of squares}, Math. Z. \textbf{250} (2005), 427--442
		
		\bibitem[Ko]{Ko} C. Ko, \emph{On the representation of a quadratic form as a sum of squares of linear forms}, Q. J. Math. \textbf{1} (1937), 81--98
		
		\bibitem[KTZ]{KTZ} J. Kr\' asensk\' y, M. Tinkov\' a, K. Zemkov\' a, \emph{There are no universal ternary quadratic forms over biquadratic fields}, Proc. Edinb. Math. Soc. \textbf{63} (2020), 861--912  
		
		\bibitem[Ma]{Ma} H. Maa{\ss}, \emph{\" Uber die Darstellung total positiver Zahlen des K\" orpers $R(\sqrt 5)$	als Summe von drei Quadraten}, Abh. Math. Sem. Univ. Hamburg \textbf{14} (1941), 185--191
		
		\bibitem[Man]{Man} S. H. Man, \emph{Minimal rank of universal lattices and number of indecomposable elements in real multiquadratic fields}, Adv. Math. {\bf 447} (2024), Paper No. 109694, 38 pp.

		\bibitem[Mar]{Marcus} D. A. Marcus, \textit{Number fields}, Second edition, Universitext, Springer, 2018
		
		\bibitem[Mo]{Mo} L. J. Mordell, \emph{The representation of a definite quadratic form as a sum of two others}, Ann. of Math. \textbf{38} (1937), 751--757
		
		\bibitem[MS]{MS} J. McKee, C. Smyth, \emph{Salem numbers of trace {$-2$} and traces of totally positive algebraic integers}, in Algorithmic number theory, Lecture Notes in Comput. Sci. \textbf{3076} (2004), Springer, 327--337

        \bibitem[MW]{MW} Q. Mu, Q. Wu, \emph{The measure of totally positive algebraic integers}, J. Number Theory \textbf{133}(1) (2013), 12--19

		\bibitem[Na]{Na} W. Narkiewicz, \emph{Elementary and analytic theory of algebraic numbers}, 3rd Edition, Springer-Verlag, Berlin, 2004
		
		\bibitem[Oh]{Oh} B.-K. Oh, \emph{Universal $\Z$-lattices of minimal rank}, Proc. Amer.	Math. Soc. \textbf{128} (2000),  683--689
		
		\bibitem[OM]{O1} O. T. O'Meara, \emph{Introduction to Quadratic Forms}, Springer-Verlag, Berlin, 1973
		
		\bibitem[OM2]{O2} O. T. O'Meara, \emph{On indecomposable quadratic forms}, J. Reine Angew. Math. \textbf{317} (1980), 120--156
	
	\bibitem[Ri]{Ri} C. Riehm, \emph{On the integral representations of quadratic forms over local fields},  Amer. J. Math. \textbf{86} (1964), 25--62

	\bibitem[Ro]{Ro} R. Robinson, \emph{Algebraic equations with span less than {$4$}}, Math. Comp. \textbf{18} (1964), 547--559
		
		\bibitem[Sch1]{Sc1} R. Scharlau, \emph{Darstellbarkeit von ganzen {Z}ahlen durch {Q}uadratsummen in einigen totalreellen {Z}ahlk\"{o}rpern}, Math. Ann. \textbf{249} (1980), 49--54
		
		\bibitem[Sch2]{Sc2} R. Scharlau, \emph{Zur Darstellbarkeit von totalreellen ganzen algebraischen Zahlen durch Summen von Quadraten}, Ph.D. Thesis, University of Bielefeld, 1979
		
		\bibitem[Si1]{Si1} C. L. Siegel, \emph{Darstellung total positiver Zahlen durch Quadrate},  Math. Z. \textbf{11} (1921),  246--275 
		
		\bibitem[Si2]{Si2} C. L. Siegel, \emph{The  trace of totally positive and real algebraic integers},  Ann. of Math. \textbf{46} (1945),  302--312 
			
		\bibitem[Si3]{Si3} C. L. Siegel, \emph{Sums of $m$-th powers of algebraic integers},  Ann. of Math. \textbf{46} (1945),  313--339 
		
		\bibitem[Si4]{Si book} C. L. Siegel, \emph{Lectures on the analytical theory of quadratic forms}, Third revised edition,
		Buchhandlung Robert Peppmüller, Göttingen, 1963
		
		\bibitem[Si5]{Si5} C. L. Siegel, \emph{Lectures on the Geometry of Numbers}, Springer-Verlag, Berlin, 1989
		
		\bibitem[XZ]{XZ} F. Xu, Y. Zhang, \emph{On indefinite and potentially universal quadratic forms over number fields}, Trans. Amer. Math. Soc. \textbf{375} (2022), 2459--2480
		
		\bibitem[Ya]{Ya} P. Yatsyna, \emph{A lower bound for the rank of a universal quadratic form with integer coefficients in a totally real field}, Comment. Math. Helvet. \textbf{94} (2019), 221--239
		
		\bibitem[Wa]{Wa} L. C. Washington, \emph{Introduction to cyclotomic fields}, Springer-Verlag, New York, 1997
		
		\bibitem[We]{We} H. Weber, \emph{Lehrbuch der {Algebra}. {In} zwei {B{\"a}nden}. 2. {Band}}, {F}. {Vieweg} und {Sohn}, Braunschweig, 1896
		
	\end{thebibliography}
 \end{document}